\newtheorem{theorem}{Theorem}
\newtheorem{proposition}[theorem]{Proposition}
\newtheorem{corollary}[theorem]{Corollary}
\newtheorem{definition}[theorem]{Definition}
\newtheorem{lemma}[theorem]{Lemma}
\newcommand{\R}{{\mathbb{R}}}
\newcommand{\T}{{\mathbb{T}}}
\newcommand{\D}{{\mathbb{D}}}
\newcommand{\e}{{\varepsilon}}
\newcommand{\Id}{\operatorname{Id}}
\newcommand{\eps}{\varepsilon}
\title[Duality method and mean field limits]{A Duality method for mean-field limits\\ with singular interactions}
\author[D.~Bresch]{D. Bresch}
\address{\sc D. Bresch. Universit\'e Savoie Mont Blanc, UMR5127 CNRS, Laboratoire de Math\'ematiques, 73376 Le Bourget-du-Lac, France}
\email{didier.bresch@univ-smb.fr}
\author[M.~Duerinckx]{M. Duerinckx}
\address{\sc M. Duerinckx. Universit\'e Libre de Bruxelles, D\'epartement de Ma\-th\'e\-matiques, 1050 Brussels, Belgium}
\email{mitia.duerinckx@ulb.be}
\author[P.--E.~Jabin]{P.--E. Jabin}
\address{\sc P.--E. Jabin.  Department of Mathematics and Huck Institutes, Pennsylvania State University, State College, PA 16801, USA}
\email{pejabin@psu.edu}
\begin{document}

\begin{abstract}
We introduce a new approach to derive mean-field limits for first- and second-order particle systems with singular interactions. It is based on a duality approach combined with the analysis of linearized dual correlations, and it allows to cover for the first time arbitrary square-integrable interaction forces at possibly vanishing temperature. In case of first-order systems, it allows to recover in particular the mean-field limit to the 2d Euler and Navier--Stokes equations. The approach also provides convergence rates.
\end{abstract}

\maketitle
\setcounter{tocdepth}{1}
\tableofcontents
\allowdisplaybreaks

\section{Introduction}
Consider the classical Newton dynamics for $N$ indistinguishable point-particles with pairwise interactions.
Letting $d\ge1$ be the space dimension, we denote by $X_{i,N}\in \Omega$ and $V_{i,N}\in\R^d$ the positions and velocities of the particles, labeled by $1\le i\le N$, where the space domain $\Omega\subset\R^d$ stands either for the whole space $\R^d$ or for the periodic torus $\mathbb T^d$.
The evolution of the particle system is given by the following famous ODEs,
\[\left\{
\begin{array}{l}
\displaystyle\frac{d}{dt}X_{i,N}=V_{i,N},\\[1mm]
\displaystyle
\frac{d}{dt}V_{i,N}=\frac1{N-1}\sum_{j:j\ne i}^NK(X_{i,N}-X_{j,N}),
\end{array}\right.\]
where $K:\Omega\to\R^d$ is an interaction force kernel with $K\in L^1_{loc}(\Omega;\R^d)$ and with the action-reaction condition $K(x-y)=-K(y-x)$, and where the mean-field scaling is considered.\footnote{We use the prefactor $\frac1{N-1}$ instead of the usual $\frac1N$ for mere convenience, but it of course does not change anything in the sequel.}

As the upcoming results apply identically to stochastic models, we shall consider more generally the following system of SDEs including Brownian forces,
\begin{equation}\label{eq:SDE}
\left\{\begin{array}{l}
\displaystyle
dX_{i,N}=V_{i,N}dt,\\[1mm]
\displaystyle
dV_{i,N}=\frac1{N-1}\sum_{j:j\ne i}^NK(X_{i,N}-X_{j,N})\,dt+\sqrt{2\alpha}\,dB_{i,N},
\end{array}\right.
\end{equation}
where $\{B_{i,N}\}_{1\le i\le N}$ are $N$ independent Brownian motions and where the temperature $0\le\alpha<\infty$ is a fixed parameter (possibly vanishing). We take $\alpha$ independent of $N$ for simplicity, but easy extensions would allow to have $\alpha=\alpha_N$ with for example $\alpha_N\to 0$.

Switching to a statistical perspective, we consider a probability density~$F_N$ on the $N$-particle phase space $\D^N:=(\Omega\times\R^d)^N$, and Newton's equations~\eqref{eq:SDE} then formally lead to the following Liouville equation,
\begin{multline}\label{eq:Liouville}
\partial_tF_N+\sum_{i=1}^N\Big(v_i\cdot\nabla_{x_i}F_N
+\frac1{N-1}\sum_{j: j\ne i}^NK(x_i-x_j)\cdot\nabla_{v_i}F_N\Big)\\[-3mm]
\,=\,\alpha\sum_{i=1}^N\Delta_{v_i}F_N.
\end{multline}
The exchangeability of the particles amounts to assuming that $F_N$ is symmetric with respect to its different entries
\[z_i=(x_i,v_i) \in\D:=\Omega\times\R^d, \qquad\text{for $1\le i \le N$.}\]
More precisely, we shall assume for simplicity that at initial time $t=0$ particles are $f^\circ$-chaotic in the sense of
\begin{equation}\label{eq:Liouville-init}
F_N|_{t=0}\,=\,(f^\circ)^{\otimes N},
\end{equation}
for some $f^\circ\in\mathcal P(\D)\cap L^\infty(\D)$.
This independence assumption for initial particles could be partly relaxed in our argument, but we do not pursue in that direction here.

In the macroscopic limit $N\uparrow\infty$, we aim at an averaged description of the system, describing the evolution of the phase-space density of a typical particle, as given by first marginal
\begin{equation*}
F_{N,1}(z)\,:=\,\int_{\D^{N-1}}F_N(z,z_2,\ldots,z_N)\,dz_2\ldots dz_N.
\end{equation*}
As is well known, formally neglecting particle correlations, we expect that $F_{N,1}$ remains close to a solution $f\in L^\infty(\R^+;\mathcal P(\D)\cap L^\infty(\D))$ of the corresponding mean-field Vlasov equation,
\begin{equation}\label{eq:Vlasov}
\partial_tf+v\cdot\nabla_xf+(K\ast f)\cdot\nabla_vf=\alpha \Delta_vf,\qquad f|_{t=0}=f^\circ,
\end{equation}
where we define $K\ast f(x):=\int_{\D}K(x-x')f(x',v')\,dx'dv'$. More generally, for all $k\ge0$, the $k$th marginal
\begin{equation}\label{eq:def-FN1}
F_{N,k}(z_1,\ldots,z_k)\,:=\,\int_{\D^{N-k}}F_N(z_1,\ldots,z_N)\,dz_{k+1}\ldots dz_N
\end{equation}
is expected to remain close to the tensor product $f^{\otimes k}$ of the mean-field Vlasov solution. This is known as propagation of chaos.

In the present contribution, we introduce a new dual hierarchical approach to justify the mean-field limit, which allows to cover for the first time arbitrary square-integrable interaction forces at possibly vanishing temperature.
A general description of the method and a comparison with previous work on the topic are postponed to the last paragraph of this introduction.
Notably, the present result holds for arbitrary `weak duality solutions' of the Liouville equation~\eqref{eq:Liouville} in the sense introduced in Appendix: this notion of solution is shown to exist globally in time whenever $K\in L^1_{loc}(\Omega;\R^d)$ and does not require the existence of renormalized solutions, hence it might not be unique and does not provide any notion of flow for Newton's equations~\eqref{eq:SDE}.

\begin{theorem}[Square-integrable interactions]\label{th:main}
Let $0\le\alpha<\infty$, let $K\in L^2_{loc}(\Omega;\R^d)$, and assume for convenience $K\in L^\infty_{loc}(|x|>1)$.
Consider a global weak duality solution $F_N\in L^\infty_{loc}(\R^+;L^1(\D^N)\cap L^\infty(\D^N))$ of the Liouville equation~\eqref{eq:Liouville}, in the sense introduced in Appendix, with $f^\circ$-chaotic initial data~\eqref{eq:Liouville-init} for some density $f^\circ\in \mathcal P(\D)\cap L^\infty(\D)$.
Let $f\in L^\infty_{loc}(\R^+;\mathcal P(\D)\cap L^\infty(\D))$ be a bounded weak solution of the Vlasov equation~\eqref{eq:Vlasov} with initial data~$f^\circ$,
and assume that for some $T>0$ it satisfies $K\ast f\in L^\infty([0,T]\times\Omega)$\footnote{Note that this assumption $K\ast f\in L^\infty([0,T]\times\Omega)$ holds for instance if $\rho_f\in L^\infty(0,T;L^2(\Omega))$, where $\rho_f(x):=\int_{\R^d}f(x,v)dv$ is the mean-field spatial density.} and has bounded Fisher information
\[\int_0^T\Big(\int_\D |\nabla_v\log f|^2 f\Big)^\frac12\,<\,\infty.\]
Then the following propagation of chaos holds: for all $k\ge1$, the $k$th marginal $F_{N,k}$, defined in~\eqref{eq:def-FN1}, converges to $f^{\otimes k}$ as $N\uparrow\infty$ in the sense of distributions on $[0,T]\times\D^k$.
\end{theorem}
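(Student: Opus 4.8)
The plan is to exploit the duality built into the notion of weak duality solution. Fix $k\ge1$, an admissible test function $\psi\in C^\infty_c(\D^k)$ and a time $t\in(0,T]$; viewing $\psi$ as a function on $\D^N$ independent of $z_{k+1},\dots,z_N$, the notion of global weak duality solution of Appendix~\ref{app:dual-sol} is tailored precisely so that
\[
\int_{\D^k}F_{N,k}(t)\,\psi \;=\; \int_{\D^N}F_N(t)\,\psi \;=\; \int_{\D^N}(f^\circ)^{\otimes N}\,\Phi^N_0 ,
\]
where $s\mapsto\Phi^N_s$, $s\in[0,t]$, solves the backward Kolmogorov (dual) equation
\[
-\partial_s\Phi^N_s \;=\; \sum_{i=1}^N v_i\cdot\nabla_{x_i}\Phi^N_s \;+\; \frac1{N-1}\sum_{i=1}^N\sum_{j:j\ne i}^N K(x_i-x_j)\cdot\nabla_{v_i}\Phi^N_s \;+\; \alpha\sum_{i=1}^N\Delta_{v_i}\Phi^N_s
\]
with terminal datum $\Phi^N_t=\psi$. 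Since $\psi$ is arbitrary, proving $\int F_{N,k}(t)\psi\to\int f^{\otimes k}(t)\psi$ for each such $(k,\psi,t)$ yields the theorem, the upgrade to convergence in the sense of distributions on $[0,T]\times\D^k$ coming from a uniform-in-$N$ weak equicontinuity in time of $t\mapsto F_{N,k}(t)$ read off directly from the explicit hierarchy of Lemma~\ref{lem:BBGKY}. Everything thus reduces to identifying the large-$N$ limit of $\int(f^\circ)^{\otimes N}\Phi^N_0$.

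The dual equation is \emph{linear}, which is the crux of the method. Let $S^0_N(r)$ denote the free backward semigroup (particle-by-particle transport and, when $\alpha>0$, velocity diffusion) and let $\mathcal I_N:=\frac1{N-1}\sum_{i\ne j}K(x_i-x_j)\cdot\nabla_{v_i}$ be the interaction operator. Iterating Duhamel's formula gives, at least formally,
\[
\Phi^N_0 \;=\; \sum_{n\ge0}\ \int_{0<s_1<\dots<s_n<t} S^0_N(s_1)\,\mathcal I_N\,S^0_N(s_2-s_1)\,\mathcal I_N\cdots\mathcal I_N\,S^0_N(t-s_n)\,\psi\ ds_1\cdots ds_n .
\]
Since $\psi$ depends only on $z_1,\dots,z_k$ and $S^0_N$ preserves this, each application of $\mathcal I_N$ either links two particles already present --- an \emph{internal} interaction, with bare weight $\tfrac1{N-1}$ --- or recruits a new particle, in which case the sum over its $\sim N$ labels compensates the $\tfrac1{N-1}$ and the weight is $O(1)$. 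Collecting the purely-recruiting contributions organizes the series as a sum over \emph{recruitment trees} rooted at $\{1,\dots,k\}$, the recruited coordinates playing the role of fluctuation variables around the mean field; this is the ``analysis of linearized dual correlations'' of the title. Performing the same construction with the discrete average over recruited labels replaced by integration against the datum $f^\circ$ (freely flowed from the initial time) defines the limiting series, whose sum we denote $\int(f^\circ)^{\otimes k}\Phi^\infty_0$; one recognizes it --- e.g.\ by running Duhamel's formula for the mean-field Vlasov hierarchy solved by $f^{\otimes k}$ --- as satisfying $\int(f^\circ)^{\otimes k}\Phi^\infty_0=\int f^{\otimes k}(t)\psi$. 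It therefore remains to prove: (a) both series converge absolutely, uniformly in $N$, with uniformly small tails; and (b) each fixed recruitment-tree term of the $N$-particle series converges to its mean-field counterpart, while every term containing at least one internal interaction is negligible.

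Step (a) is the main obstacle, and it is where the hypotheses $K\in L^2_{loc}$, $K\in L^\infty_{loc}(|x|>1)$, $F_N\in L^\infty$ and the time-integrated Fisher information bound on $f$ are used. The difficulty is the velocity derivative carried by each $\mathcal I_N$: when $\alpha=0$ the free semigroup $S^0_N$ is a measure-preserving transport with no smoothing, so this derivative cannot be absorbed as a gain of regularity. Instead one integrates by parts in the pairing against the transported datum $(f^\circ)^{\otimes N}$, moving each $\nabla_{v_i}$ off $\Phi^N$ and onto the evolved density; by Cauchy--Schwarz, together with $K\in L^2_{loc}$, $K\in L^\infty_{loc}(|x|>1)$ and the $L^\infty$ bound, a recruiting interaction occurring at time $s_j$ then contributes a factor essentially controlled by the $L^2$-norm of the velocity gradient of the one-particle push-forward of $f^\circ$ along the mean-field flow at time $s_j$, i.e.\ by $\big(\int_\D|\nabla_v\log f_{s_j}|^2 f_{s_j}\big)^{1/2}$. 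A tree with $n$ edges produces $n$ such factors under the time-ordered integral $\int_{0<s_1<\dots<s_n<t}$, which are tamed by $\int_{0<s_1<\dots<s_n<t}\prod_j g(s_j)\,ds\le\tfrac1{n!}\big(\int_0^t g\big)^n$; together with the (at worst factorial) number of trees of size $n$ being absorbed by the $1/n!$ after subdividing $[0,T]$ into finitely many subintervals if needed, the series converges and its tail is small precisely because $\int_0^T\big(\int_\D|\nabla_v\log f|^2 f\big)^{1/2}<\infty$. The very same estimates, carried out directly on the $N$-particle dual hierarchy of Lemma~\ref{lem:BBGKY}, need only the $L^\infty$ bound on $F_N$. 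Finally, a simpler variant of the estimate shows that any contribution containing at least one internal interaction gains a factor $O(k^2/N)$ and hence vanishes.

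For step (b), fix a recruitment tree with $n$ edges. Because everything has been transferred to the initial time, where the data $(f^\circ)^{\otimes N}$ is \emph{exactly} a tensor product, the recruited background particles $z_{k+1},\dots,z_{k+n}$ are genuinely i.i.d.\ with law $f^\circ$: no chaos assumption on $F_N$ at positive times is required. The combinatorial prefactor $\tfrac{(N-k)(N-k-1)\cdots(N-k-n+1)}{(N-1)^n}$ tends to $1$, and the remaining finite-dimensional integral over $z_1,\dots,z_{k+n}$ converges, by dominated convergence with the bounds of step (a) as domination, to the corresponding mean-field tree term. Summing over $n$ and over trees, and invoking the uniform tail bound of step (a) to exchange limit and summation, we obtain $\int(f^\circ)^{\otimes N}\Phi^N_0\to\int(f^\circ)^{\otimes k}\Phi^\infty_0=\int f^{\otimes k}(t)\psi$, which is the desired propagation of chaos. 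The genuinely delicate point of the whole scheme is the uniform summability of step (a) at vanishing temperature; the remaining ingredients are combinatorial bookkeeping and soft limiting arguments.
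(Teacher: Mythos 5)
Your opening step is the same as the paper's: using the defining duality formula to rewrite
$\int_{\D^k}\psi^{\otimes k}F_{N,k}(t)=\int_{\D^N}(f^\circ)^{\otimes N}\Phi^N_0$. But from there you diverge sharply, and the divergence contains a genuine gap. You propose to Duhamel-iterate the dual equation and resum the resulting recruitment-tree series, asserting in step~(a) that each recruiting interaction contributes one factor $\big(\int_\D|\nabla_v\log f_{s_j}|^2 f_{s_j}\big)^{1/2}$, so that $n$ interactions give a product of $n$ such factors tamed by the $1/n!$ from time ordering. This is not correct when the same particle recruits more than once, or appears as recruiter in a chain: each application of $\mathcal I_N$ carries a $\nabla_{v_i}$ on the \emph{recruiting} coordinate, and at $\alpha=0$ the free flow is non-smoothing, so when you integrate by parts against $(f^\circ)^{\otimes N}$ you accumulate higher-order velocity derivatives of $f^\circ$ on that coordinate. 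Concretely, with $k=1$ and a tree where particle $1$ recruits two fresh particles at times $s_1<s_2$, the corresponding term produces (after averaging out the recruited labels)
\[
\int_\D (K\ast f)(x_1)^a\,(K\ast f)(x_1)^b\,\partial_{v_1^a}\partial_{v_1^b} f(z_1)\,\big(\cdots\big)\,dz_1,
\]
which needs control of $\nabla_v^2 f$ in a weighted $L^1$ or $L^2$ sense; the Fisher information bound $\int|\nabla_v\log f|^2 f<\infty$ only controls $\nabla_v\log f$ in $L^2(f)$ and says nothing about second-order derivatives. Since a generic tree of size $n$ forces up to order-$n$ derivatives on a single coordinate, your series is not absolutely summable under the stated hypotheses, and step~(a) does not close. (It would close with $\alpha>0$, using parabolic smoothing to reabsorb the derivatives, which is essentially Lacker's route; the whole point of the theorem is to reach $\alpha=0$.)

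The paper avoids this accumulation by a structurally different route. Rather than iterating Duhamel, it defines the marginals $M_{N,n}$ of $\Phi_N$ with respect to $f^{\otimes N}$ and the corresponding cumulants $C_{N,n}$ (a cluster/linearized-correlation expansion). Two facts then do the work that your product bound was supposed to do. First, exchangeability gives the a priori bound $\int_{\D^n}|C_{N,n}|^2 f^{\otimes n}\le\binom{N}{n}^{-1}\|\psi\|_\infty^2$ \emph{for free} (Lemma~\ref{lem:apriori-sym}), with no derivative losses and no Duhamel series to sum. Second, in the BBGKY-type hierarchy for $M_{N,n}$ (hence for $C_{N,n}$), the $\nabla_{v_i}$ from $\mathcal I_N$ combines with the $\nabla_{v_i}$ from the mean-field equation on the weight $f^{\otimes N}$ to form the kernel $V_f(z_i,z_j)=(K(x_i-x_j)-K\ast f(x_i))\cdot\nabla_{v_i}\log f(z_i)$: the Fisher information appears exactly once per interaction term, by construction, not by a bound. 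The remaining step is a \emph{linear} uniqueness argument for the limiting hierarchy (Lemma~\ref{lem:unique-hier}), where the transport and drift terms on $\bar C_n$ cancel against the evolution of the weight $f^{\otimes n}$ in the $L^2(f^{\otimes n})$ energy identity, and the sources are integral operators with kernel $V_f$; a generating function $Z(t,r)=\sum_n r^n\|\bar C_n\|_{L^2(f^{\otimes n})}$ then satisfies a scalar transport inequality whose solution vanishes on $[T_0,T]$, iterated backward in time. This is where $K\in L^2_{loc}$ and the Fisher information enter, namely through $\Lambda_f=\|V_f\|_{L^2(f^{\otimes 2})}\in L^1(0,T)$, and nowhere do higher derivatives of $f$ or $\psi$ appear.

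Your parts that are sound: the duality reformulation, the observation that recruited labels at time $0$ are genuinely i.i.d.\ with law $f^\circ$ under the exact chaoticity assumption, and the combinatorial prefactor $\frac{(N-k)\cdots(N-k-n+1)}{(N-1)^n}\to 1$. The unsound part is precisely the ``one Fisher factor per interaction'' claim that drives the uniform summability. To repair your scheme under the theorem's hypotheses you would need to find cancellations in the tree sum that forbid higher-order velocity derivatives, which is in effect a rederivation of the paper's cumulant cancellation and $L^2(f^{\otimes n})$ energy identity; absent that, the Duhamel route requires either $\alpha>0$ or higher regularity of $f$ (and of $\psi$) than assumed.
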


Under an arbitrarily low $H^s$ regularity of the interaction force, the approach further allows to derive some convergence rates for the mean-field limit.
Note however that these rates are not optimal and that they even fall apart exponentially over time --- which is related to a lack of a semigroup property in our approach.
This could be drastically improved in the diffusive setting $\alpha>0$, but we do not pursue in that direction here.

\begin{theorem}[Error estimates]\label{theor:quant}
Let $0\le\alpha<\infty$, let $K\in H^s_{loc}(\Omega;\R^d)$ for some $s>0$, and assume for convenience $K\in W^{1,\infty}_{loc}(|x|>1)$.
Consider a global weak duality solution $F_N\in L^\infty_{loc}(\R^+;L^1(\D^N)\cap L^\infty(\D^N))$ of the Liouville equation~\eqref{eq:Liouville}, in the sense introduced in Appendix, with $f^\circ$-chaotic initial data~\eqref{eq:Liouville-init} for some density $f^\circ\in \mathcal P(\D)\cap L^\infty(\D)$.
Let $f\in L^\infty_{loc}(\R^+;\mathcal P(\D)\cap L^\infty(\D))$ be a bounded weak solution of the Vlasov equation~\eqref{eq:Vlasov} with initial data~$f^\circ$,
and assume that for some $T>0$ it satisfies $K\ast f\in L^\infty(0,T;W^{1,\infty}(\Omega))$ and
\[\int_0^T\Big(\int_\D \big(|\nabla_v\log f|^2+|\tfrac1f\nabla_{vv}^2f|^2+|\tfrac1f\nabla_{xv}^2f|^2\big) \,f\Big)^\frac12\,<\,\infty.\]
Then for all $t\in[0,T]$ and $k\ge1$ we have
\[\|F_{N,k}(t)-f(t)^{\otimes k}\|_{C_c(\D^k)^*}\le CN^{-(\frac1Ce^{-Ct})},\]
for some constant $C$ independent of $N,T,t$.
\end{theorem}

The method can also be adapted to first-order dynamics: the corresponding qualitative result that we obtain in this way takes on the following guise. As the adaptation is straightforward, we omit the detail for shortness, as well as the corresponding quantitative statement. Compared to previous work on mean field for first-order dynamics, it allows to treat for the first time quite singular interactions that have no specific energy structure. As explained at the end of the statement, by a symmetry argument, the local square-integrability of the interaction force can be slightly relaxed up to assuming more regularity for the mean-field solution, so that this result covers in particular the well-known case of the 2d Euler and Navier--Stokes equations.

\begin{theorem}[First-order dynamics]\label{th:main-1st}
Let $0\le\alpha<\infty$, let $K\in L^2_{loc}(\Omega;\R^d)$ with $\operatorname{div}K\in L^2_{loc}(\Omega)$,
and assume for convenience \mbox{$K\in W^{1,\infty}(|x|>1)$.}
Consider a global weak duality solution $F_N\in L^\infty_{loc}(\R^+; 
L^1(\Omega^N)\cap L^\infty(\Omega^N))$, in the sense introduced in Appendix, of the Liouville equation 
\begin{equation}\label{eq:Liouville-1st}
\partial_tF_N+\frac1{N-1}\sum_{i\ne j}^N\operatorname{div}_{x_i}\big(K(x_i-x_j)F_N\big)=\alpha\sum_{i=1}^N\Delta_{x_i}F_N,
\end{equation}
with $f^\circ$-chaotic initial data $F_N|_{t=0}=(f^\circ)^{\otimes N}$
for some density $f^\circ\in\mathcal P(\Omega)\cap L^\infty(\Omega)$.
Let $f\in L^\infty_{loc}(\R^+;\mathcal P(\Omega)\cap L^\infty(\Omega))$ be a bounded weak solution of the McKean--Vlasov equation
\[\partial_tf+\operatorname{div}\big((K\ast f)f\big)=\alpha\Delta f,\qquad f|_{t=0}=f^\circ,\]
and assume that for some $T>0$ it has controlled Fisher information
\[\int_0^T\Big(\int_\Omega |\nabla\log f|^2 f \Big)^\frac12\,<\,\infty.\]
Then the following propagation of chaos holds: for all $k\ge1$, the $k$th marginal $F_{N,k}$ of $F_N$ converges to $f^{\otimes k}$ as $N\uparrow\infty$ in the sense of distributions on $[0,T]\times\D^k$.

\medskip\noindent
In addition, in this result, the condition $K\in L^2_{loc}(\Omega;\R^d)$ can be partly relaxed up to assuming more regularity on $f$: more precisely, it can be replaced by the following weaker condition,
{\small\[\int_0^T\!\Big(\int_{\Omega^2}\big|K(x-y)\cdot\big(\nabla\log f(x)-\nabla\log f(y)\big)\big|^2f(x)f(y)\,dxdy\Big)^\frac12<\infty,\]
}which holds for instance whenever the kernel satisfies $|x|K\in L^2_{loc}(\R^d)$ provided that $\nabla\log f\in L^1(0,T;W^{1,\infty}(\Omega))$.
\end{theorem}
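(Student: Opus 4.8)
The plan is to repeat, in the first-order setting, the duality argument that establishes Theorem~\ref{th:main}; I therefore recall only its structure and focus on the two points that genuinely differ, namely the non-conservative term in the Liouville equation~\eqref{eq:Liouville-1st} and the relaxed condition stated at the end. First I would fix $k\ge1$ and a test function $\phi_k\in C_c^\infty(\Omega^k)$, and introduce the tensorized mean-field backward flow $\Phi_k$ on $[0,T]\times\Omega^k$, i.e.\ the solution of $-\partial_t\Phi_k=\sum_{i=1}^k\big((K\ast f_t)(x_i)\cdot\nabla_{x_i}\Phi_k+\alpha\Delta_{x_i}\Phi_k\big)$ with $\Phi_k|_{t=T}=\phi_k$, whose gradient is bounded with controlled tails thanks to $K\in W^{1,\infty}(|x|>1)$, exactly as in Theorem~\ref{th:main}. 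Since $f$ solves the linear non-autonomous Fokker--Planck equation with drift $K\ast f_t$, duality in time yields $\langle(f^\circ)^{\otimes k},\Phi_k(0,\cdot)\rangle=\langle f(T)^{\otimes k},\phi_k\rangle$. Testing the weak duality formulation of~\eqref{eq:Liouville-1st} (Appendix~\ref{app:dual-sol}) against $\Phi_k$, viewed as a function on $\Omega^N$ depending only on $(x_1,\dots,x_k)$, writing $\mathcal{F}_i(t,\cdot):=\tfrac1{N-1}\sum_{j\ne i}K(x_i-x_j)-(K\ast f_t)(x_i)$ for the force fluctuation felt by particle $i$, and using the chaotic data~\eqref{eq:Liouville-init}, I would obtain the exact identity
\[\langle F_{N,k}(T)-f(T)^{\otimes k},\phi_k\rangle=\int_0^T\sum_{i=1}^k\big\langle F_N(t),\,\mathcal{F}_i(t,\cdot)\cdot\nabla_{x_i}\Phi_k(t)\big\rangle\,dt.\]

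By exchangeability the right-hand side reduces, up to an $O(1/N)$ diagonal contribution (controlled by the a priori bounds on the marginals), to a multiple of $\int_0^T\langle F_{N,k+1}(t),(K(x_1-x_{k+1})-(K\ast f_t)(x_1))\cdot\nabla_{x_1}\Phi_k(t)\rangle\,dt$, whose leading mean-field contribution $F_{N,k+1}\approx f^{\otimes(k+1)}$ vanishes identically, since $\int(K(x_1-x_{k+1})-(K\ast f_t)(x_1))f_t(x_{k+1})\,dx_{k+1}=0$; only correlation terms survive, to be handled through the analysis of the linearized dual correlations and the explicit hierarchy of Lemma~\ref{lem:BBGKY}, exactly as in the proof of Theorem~\ref{th:main}. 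The single structural novelty with respect to~\eqref{eq:Liouville} is that $\operatorname{div}_{x_i}(K(x_i-x_j)F_N)=K(x_i-x_j)\cdot\nabla_{x_i}F_N+(\operatorname{div}K)(x_i-x_j)F_N$ carries the extra factor $(\operatorname{div}K)(x_i-x_j)$, which I would bound in every estimate exactly as the factor $K(x_i-x_j)$; this is precisely why $\operatorname{div}(K)\in L^2_{loc}(\Omega)$ is imposed. As to the correlation analysis itself, the point that makes it run for a merely square-integrable force, and that dictates the form of the relaxed condition, is that --- after symmetrization in the exchanged indices $i\leftrightarrow j$ using the action-reaction condition $K(x-y)=-K(y-x)$, and after the change of unknowns $F_{N,\ell}=f^{\otimes\ell}h_{N,\ell}$, which puts the gradients in contact with the score of $f$ via $\nabla_x(f(x)g)=f(x)(\nabla g+g\,\nabla\log f(x))$ --- the singular contributions that must be controlled are of the form $\int_{\Omega^2}f(x)f(y)\,(\cdots)\,K(x-y)\cdot(\nabla\log f(x)-\nabla\log f(y))\,dxdy$, which by Cauchy--Schwarz is at most
\[\Big(\int_{\Omega^2}\big|K(x-y)\cdot(\nabla\log f(x)-\nabla\log f(y))\big|^2f(x)f(y)\,dxdy\Big)^{1/2}\,\big\|(\cdots)\big\|_{L^2(f\otimes f)},\]
the last factor being an $L^2$-type norm of a dual correlation that tends to $0$ as $N\uparrow\infty$ by the hierarchy analysis, the initial correlations vanishing thanks to the exact chaoticity~\eqref{eq:Liouville-init}.

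It then remains to check that the first factor above is integrable in $t$. When $K\in L^2_{loc}(\Omega;\R^d)$, the pointwise bound $|K(x-y)\cdot(\nabla\log f(x)-\nabla\log f(y))|\le|K(x-y)|\,(|\nabla\log f(x)|+|\nabla\log f(y)|)$, together with $f\in L^1\cap L^\infty$, $K\in L^2_{loc}$ and $K\in W^{1,\infty}(|x|>1)$, yields $\sup_x\int_\Omega|K(x-y)|^2f(y)\,dy<\infty$, whence the first factor is $\le C\big(\int_\Omega|\nabla\log f|^2f\big)^{1/2}$ and, after integration in $t$, is controlled by the main Fisher-information hypothesis. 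Since the local square-integrability of $K$ enters only here, it can be relaxed to the direct requirement that the displayed space-time quantity be finite; and if $|x|K\in L^2_{loc}(\R^d)$ and $\nabla\log f\in L^1(0,T;W^{1,\infty}(\Omega))$ then $|\nabla\log f(x)-\nabla\log f(y)|\le\|\nabla^2\log f\|_{L^\infty}|x-y|$ makes that quantity finite, covering in particular the 2d Biot--Savart kernel (for which $|x|K$ is bounded near the origin and $\operatorname{div}K=0$) and hence the 2d Euler ($\alpha=0$) and Navier--Stokes ($\alpha>0$) equations. Running the argument with $T$ replaced by an arbitrary $t\in[0,T]$ and $\phi_k$ ranging over $C_c^\infty(\Omega^k)$ then gives the asserted convergence in the sense of distributions on $[0,T]\times\Omega^k$. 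The step I expect to be the main obstacle is not specific to the first-order case: as in Theorem~\ref{th:main}, it is the analysis of the linearized dual correlations --- showing that the hierarchy of Lemma~\ref{lem:BBGKY} genuinely propagates the initially vanishing smallness of the correlations in the relevant norms, uniformly on $[0,T]$, at possibly vanishing temperature $\alpha=0$, where the $N$-body dynamics offers no regularization and one must rely solely on the regularity of the mean-field density $f$ encoded in its finite Fisher information.
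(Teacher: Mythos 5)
Your proposal misdescribes the paper's architecture, and the mismatch is fatal to the argument as written. In the proof of Theorem~\ref{th:main} (whose adaptation to first order is what the paper asks for), one does \emph{not} test $F_N$ against a tensorized mean-field backward flow $\Phi_k(t,x_1,\dots,x_k)$ solving $-\partial_t\Phi_k=\sum_{i\le k}\big((K\ast f_t)(x_i)\cdot\nabla_{x_i}+\alpha\Delta_{x_i}\big)\Phi_k$. Instead, Proposition~\ref{lem:dual} takes the genuine $N$-particle backward Liouville equation~\eqref{eq:PhiN} with the symmetrized final condition $\Phi_N|_{t=T}=\binom{N}{k}^{-1}\sum_{i_1<\dots<i_k}\psi(z_{i_1})\cdots\psi(z_{i_k})$, and all the subsequent correlation analysis --- the marginals $M_{N,n}$, the cumulants $C_{N,n}$, the a priori bound of Lemma~\ref{lem:apriori-sym}, the hierarchies of Propositions~\ref{propCn}--\ref{propbarCn} and Lemma~\ref{lem:BBGKY}, and the uniqueness result of Lemma~\ref{lem:unique-hier} --- is performed on the cumulants of this \emph{dual} solution $\Phi_N$ with respect to $f^{\otimes N}$. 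This is the paper's declared novelty: ``we study correlations of the dual equations''. The identity you derive,
\[\langle F_{N,k}(T)-f(T)^{\otimes k},\phi_k\rangle=\int_0^T\sum_{i=1}^k\big\langle F_N(t),\,\mathcal F_i(t,\cdot)\cdot\nabla_{x_i}\Phi_k(t)\big\rangle\,dt,\]
is the starting point of the classical \emph{forward} BBGKY/relative-entropy route (Jabin--Wang type), and to close it one would need quantitative a priori control on the forward marginal cumulants of $F_N$. The paper has no such control; what it does have is the free bound of Lemma~\ref{lem:apriori-sym} on the \emph{dual} cumulants, coming solely from $\|\Phi_N\|_{L^\infty}\le\|\psi\|_{L^\infty}$, and there is no analogue on the forward side. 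Your subsequent appeal to ``the linearized dual correlations and the explicit hierarchy of Lemma~\ref{lem:BBGKY}'' is therefore internally inconsistent with the $\Phi_k$-setup you chose: those objects are cumulants of $\Phi_N$, not of $F_N$, and do not appear in your derivation. There is also a secondary issue: testing $F_N$ against the smooth $k$-particle function $\Phi_k$ presupposes that $F_N$ is an actual weak solution and that $K(x_i-x_j)F_N$ is locally integrable, whereas Definition~\ref{WeakForm} only guarantees duality against bounded weak solutions of the backward $N$-body equation, precisely because weak duality solutions need not be weak solutions.

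On the positive side, your account of the relaxed condition is essentially correct and matches what the concluding remark of the theorem has in mind. Since $C_{N,2}$ and $f^{\otimes2}$ are symmetric under $z_1\leftrightarrow z_2$, one may symmetrize $V_f$ in identity~\eqref{eq:rewr-red}; using $K(x-y)=-K(y-x)$, the singular part of the symmetrized kernel indeed becomes $\tfrac12\,K(x_1-x_2)\cdot\big(\nabla\log f(x_1)-\nabla\log f(x_2)\big)$, which gives the displayed weaker hypothesis, while the remaining $-\tfrac12\big(K\ast f(x_1)\cdot\nabla\log f(x_1)+K\ast f(x_2)\cdot\nabla\log f(x_2)\big)$ is controlled by the Fisher information since $K\ast f\in L^\infty$. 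Your verifications --- that $K\in L^2_{loc}(\Omega;\R^d)$ together with $K\in W^{1,\infty}(|x|>1)$ and $f\in L^1\cap L^\infty$ gives $\sup_x\int|K(x-y)|^2f(y)\,dy<\infty$ and hence the condition via Cauchy--Schwarz and Fisher information, and that $|x|K\in L^2_{loc}$ with $\nabla\log f\in L^1(0,T;W^{1,\infty})$ also suffices via $|\nabla\log f(x)-\nabla\log f(y)|\le\|\nabla^2\log f\|_{L^\infty}|x-y|$ --- are both correct. But to assemble a proof you must redo the dual correlation analysis on $\Phi_N$ in the first-order setting (where $V_f(z_i,z_j)=(K(x_i-x_j)-K\ast f(x_i))\cdot\nabla\log f(x_i)$ and the diffusion acts in $x$), not on a forward mean-field flow $\Phi_k$.
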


\subsection*{A new dual hierarchical method}
Duality methods were first introduced in~\cite{DiLi} for uniqueness and stability problems for renormalized solutions of transport equations.
They were further developed in~\cite{CriSpi} in the context of renormalized solutions of the 2d Euler equations, and later used to construct Eulerian and Lagrangian solutions to the continuity and Euler equations with $L^1$ vorticity~\cite{CriNobSpi} and to show that smooth approximation is not a selection principle for the transport equation with rough vector field~\cite{CiaCriSpi}. To be complete, we can also cite for instance~\cite{LiSe} and references therein (such as~\cite{BoJa}) for definitions of dual solutions related to transport-type equations.

To our knowledge, duality methods have however never been used for mean-field limit purposes.
While the mean-field problem consists in studying the approximate propagation of the tensorized structure of initial data~\eqref{eq:Liouville-init} for the solution $F_N$ of the Liouville equation, a duality argument allows to rather consider the solution of a backward dual Liouville equation with final data having a {\it linear} structure, see Proposition~\ref{lem:dual}, and the problem is then reduced to studying instead how this {\it linear} structure propagates. The advantage of this reformulation is as follows: the defect of linearity is naturally measured by means of {\it linear} correlation functions, as defined in Section~\ref{sec:correl}, for which Hilbertian techniques provide useful a priori estimates, cf.~Lemma~\ref{lem:apriori-sym}. This contrasts with the usual {\it nonlinear} correlation functions describing propagation of chaos, for which corresponding estimates are not available.

Correlation functions associated with the $N$-particle density~$F_N$, as well as their evolution equations, were first studied in the framework of kinetic theory in~\cite{GeGa12,LuMa16,PuSi17,BGSR-17,LuMaNo18,Duer} in form of so-called cumulant expansions. Linearized correlations close to an initial equilibrium have been used in particular in~\cite{BGSR-17,BoGaSaSi1,BoGaSaSi2} for problems related to dilute gases of hard spheres, and they have also been instrumental in recent papers such as~\cite{DuerSain,DuJa} for mean-field limit problems.
Yet, the perspective followed here is different as we study linear correlations of the \emph{dual} equations away from equilibrium.

Once a priori estimates for linear dual correlations are established, we will be able to analyze the BBGKY-type hierarchy of equations satisfied by those quantities. At vanishing temperature $\alpha=0$, it is well-known that BBGKY hierarchies cannot be of any rigorous use due to the loss of derivatives. Yet, in the dual reformulation, thanks to a priori estimates on linear dual correlations, we discover that the loss of derivatives in the hierarchy of equations for the latter only occurs in perturbative terms, which vanish in the macroscopic limit $N\uparrow\infty$, cf.~Lemma~\ref{propbarCn}. For this reason, we manage to establish a uniqueness principle for the limit hierarchy, cf.~Lemma~\ref{lem:unique-hier}. As linear dual correlations vanish at final time in the limit, we may then deduce that this cancellation propagates over time, thus allowing to conclude the desired mean-field limit result. Convergence rates are further obtained in Section~\ref{sec:quant} by a more detailed stability analysis of the limit hierarchy, viewing the exact $N$-dependent hierarchy satisfied by linear dual correlations as a (singular) perturbation.

\subsection*{Comparison to previous results}
In recent years, mean-field limit problems with singular interactions have been largely investigated for specific kernels. {At vanishing temperature}, the mean-field limit for {\it first-order systems} was classically obtained for example in~\cite{Goodman91,GooHouLow90,Scho95,Scho96} for 2d Euler, and it was remarkably extended more recently in~\cite{Se} to essentially any Riesz interaction kernel by means of modulated energy techniques.
For corresponding first-order systems at positive temperature, we refer in particular to~\cite{FHM-JEMS,Osada,JabWan2} for the mean-field limit to 2d Navier--Stokes, to~\cite{BrJaWa, BreJabWan, FoTa, ChRoSe2} for singular attractive kernels, and to~\cite{NgRoSe} for multiplicative noise. Uniform-in-time propagation of chaos was even recently obtained in~\cite{GuiLebMon,RoSe,ChRoSe1}. 

In the case of {\it second-order systems}, on the contrary, much less is known. The mean-field limit was classically obtained in~\cite{BraHep,Dob} for Lipschitz-continuous interaction kernels $K$, see also~\cite{Sznitman}. In dimension~$d=1$, the mean-field limit to the Vlasov--Poisson--Fokker--Planck system was derived in~\cite{GuLe,HaSa}. In dimension~$d\geq 2$, the only results for unbounded kernels were obtained in~\cite{HauJab1,HauJab2}, but those are valid only for vanishing temperature and for mildly singular interaction kernels with $|K(x)|\lesssim |x|^{-\gamma}$ and $|\nabla K(x)|\lesssim |x|^{-\gamma-1}$ for some $\gamma<1$. In~\cite{JabWan1}, the mean-field limit was derived for any~$K\in L^\infty(\R^d)$ without needing any control on~$\nabla K$. Let us also mention~\cite{CaChHaSa}, where the mean-field limit is derived for bounded but discontinuous interactions based on so-called vision cones.

The case of singular interaction kernels $K$ with $N$-dependent truncation is much better understood: given $|K(x)|\lesssim |x|^{-\gamma}$, one can consider the mean-field limit problem with $K$ replaced instead by a truncated kernel $|K_N(x)|\lesssim |x+\eps_N|^{-\gamma}$ with some regularizing parameter $\eps_N\to 0$ as $N\to \infty$. This was classically considered for example in~\cite{GanVic, GanVic2,VA,Wollman}, and we refer to the more recent works~\cite{Laz,LazPic,FePi} where the conditions on the regularization parameter~$\eps_N$ have been further weakened. The mean-field limit for such truncated kernels is also studied in case of positive temperature, see for example~\cite{CaChSa,HuLiPi}.

In the special case of the Cucker--Smale flocking model, it is possible to take advantage of some dispersion properties of the dynamics in order to prove the mean-field limit result for some range of singular interaction kernels;
we refer for instance to~\cite{MuPe}. 

Recently, new hierarchical approaches have been developed to bound marginals for particle systems at positive temperature. While at vanishing temperature the BBGKY hierarchy of equations for marginals is of no use due to the loss of derivatives, this loss can be compensated by the regularization provided by the diffusion at positive temperature, then indeed allowing for rigorous estimates based on the hierarchy.
Using relative entropy, \cite{Lacker} was the first to derive in this way the optimal quantitative estimates for the convergence of marginals to the tensorized mean-field solution (obtaining optimal rates~$O(N^{-1})$ for marginals, as first obtained for smooth interactions in~\cite{Duer}). While formulated for first-order systems, the method of~\cite{Lacker}, as noted by the author, also applies to second-order systems at positive temperature. The use of the relative entropy, however, requires that the interaction kernel belongs to an exponential Orlicz space. More recently, a novel hierarchical approach has been developed in~\cite{BrJaSo}, allowing to justify on short times the mean-field limit for second-order systems at positive temperature, provided that the interaction kernel derives from a potential and that the latter (rather than the force kernel) belongs to an exponential Orlicz space. This leads in particular to the first ever (short-time) derivation of the mean-field limit to the Vlasov--Poisson--Fokker--Planck system in 2d, as well as to some partial result in~3d.

In the present work, we propose the first method allowing to consider a possibly vanishing temperature and covering both first- and second-order systems with a singular kernel $K\in L^2_{\rm loc}(\Omega;\R^d)$. Also note that our results yield convergence rates and hold for all times as long as the mean-field Vlasov solution has the required regularity. Unfortunately, in its present form, the method does not allow to consider more general singular kernels for instance to justify the mean-field  limit to the Vlasov--Poisson--Fokker--Planck equation, even in $2d$.

\section{Dual reformulation for mean field}
Henceforth, we let the space domain $\Omega$ be either the whole space $\R^d$ or the periodic torus $\T^d$ in dimension $d\ge1$,
we let $0\le\alpha<\infty$ be fixed,
we consider an interaction force kernel $K\in L^1_{loc}(\Omega;\R^d)\cap L^\infty(|x|>1)$ with the action-reaction condition $K(x-y)=-K(y-x)$, and we consider an initial density $f^\circ\in\mathcal P(\D)\cap L^\infty(\D)$ on the phase space $\D:=\Omega\times\R^d$. Let also $f\in L^\infty_{loc}(\R^+;\mathcal P(\D)\cap L^\infty(\D))$ be a weak solution of the corresponding mean-field Vlasov equation~\eqref{eq:Vlasov} with data~$f^\circ$.

Our approach starts with the following observation, which provides a dual reformulation for mean field.
The point is that product data~\eqref{eq:Liouville-init} for the primal Liouville equation are replaced by linear data~\eqref{eq:PhiN-init} for the dual Liouville equation: this exchange plays a key role in the sequel as the evolution of linear data will be easier to investigate by means of Hilbert techniques.
Note that the notion of weak duality solutions for the Liouville equation, as introduced in Appendix, is precisely designed for the present result to hold, while the rest of the argument will hold for an arbitrary bounded weak solution of the dual Liouville equation.

\begin{proposition}\label{lem:dual}
Let $F_N$ be a global weak duality solution of the Liouville equation~\eqref{eq:Liouville} in the sense introduced in Appendix, with $f^\circ$-chaotic initial data~\eqref{eq:Liouville-init}.
Given $k\ge1$, $T>0$, and $\psi\in C^\infty_c(\D)$, consider a bounded weak solution $\Phi_{N}\in L^\infty([0,T]\times \D^N)$ of the corresponding backward dual Liouville equation
\begin{multline}\label{eq:PhiN}
\partial_t\Phi_{N}+\sum_{i=1}^N\Big(v_i\cdot\nabla_{x_i}\Phi_N+\frac1{N-1}\sum_{j:j\ne i}^NK(x_i-x_j)\cdot\nabla_{v_i}\Phi_{N}\Big)\\[-4mm]
=-\alpha\sum_{i=1}^N\Delta_{v_i}\Phi_N,
\end{multline}
with final condition
\begin{equation}\label{eq:PhiN-init}
\Phi_{N}(z_1,\ldots,z_N)|_{t=T}\,=\,\binom{N}{k}^{-1}\sum_{1\le i_1<\ldots<i_k\le N}\psi(z_{i_1})\ldots\psi(z_{i_k}),
\end{equation}
such that $F_N$ is in duality with $\Phi_N$
in the sense introduced in Appendix.
Further assume that the mean-field solution $f$ satisfies $K\ast f\in L^\infty([0,T]\times\Omega)$ and $\nabla_{v}f\in L^1([0,T]\times\D)$.
Then there holds
\begin{multline}\label{eq:duality-FNPhiN}
\int_{\D^k}\psi^{\otimes k}\big(F_{N,k}(T)-f(T)^{\otimes k}\big)\\[-3mm]
\,=\,
-N\int_0^T\Big(\int_{\D^N}V_f(z_1,z_2)\,\Phi_N\,f^{\otimes N}\,\Big)dt,
\end{multline}
where we have defined
\begin{equation}\label{eq:def-Vf}
V_f(z_i,z_j)\,:=\,\big(K(x_i-x_j)-K\ast f(x_i)\big)\cdot(\nabla_{v}\log f)(z_i).
\end{equation}
\end{proposition}

\begin{proof}
As $F_N$ and $\Phi_N$ are taken to be in duality in the sense introduced in Appendix, we get
\[\int_{\D^N}\Phi_N(T)F_N(T)\,=\,\int_{\D^N}\Phi_N(0)F_N(0),\]
that is, inserting the final condition~\eqref{eq:PhiN-init} for $\Phi_N$ and the $f^\circ$-chaotic initial data~\eqref{eq:Liouville-init} for $F_N$,
\begin{equation*}
\int_{\D^k}\psi^{\otimes k}F_{N,k}(T)
\,=\,\int_{\D^N}\Phi_N(0)\,(f^\circ)^{\otimes N}.
\end{equation*}
Now comparing $F_{N,k}$ with the tensorized mean-field solution $f^{\otimes k}$ and noting that the final condition for $\Phi_N$ also yields $\int_{\D^k}\psi^{\otimes k}f(T)^{\otimes k}=\int_{\D^N}\Phi_N(T)f(T)^{\otimes N}$, we find
\begin{multline*}
{\int_{\D^k}\psi^{\otimes k}\big(F_{N,k}(T)-f(T)^{\otimes k}\big)}
\,=\,\int_{\D^N}\Phi_N(0)\,(f^\circ)^{\otimes N}-\int_{\D^N}\Phi_N(T)\,f(T)^{\otimes N}\\
\,=\,-\int_0^T\Big(\tfrac{d}{dt}\int_{\D^N}\Phi_N(t)\,f(t)^{\otimes N}\Big)\,dt.
\end{multline*}
Using the equations for $\Phi_N$ and for $f$, this entails
\begin{multline*}
\int_{\D^k}\psi^{\otimes k}\big(F_{N,k}(T)-f(T)^{\otimes k}\big)\\[-3mm]
\,=\,
-\frac1{N-1}\sum_{i\ne j}^N\int_0^T\Big(\int_{\D^N}V_f(z_i,z_j)\,\Phi_N\,f^{\otimes N}\,\Big)dt,
\end{multline*}
where $V_f$ is defined in the statement.
This identity is easily justified by an approximation argument for bounded weak solutions~$\Phi_N$ and~$f$, provided that $\nabla_{v}f\in L^1([0,T]\times\D)$.
Note that this assumption precisely ensures that the right-hand side actually makes sense.
Recalling that~$\Phi_N$ is a symmetric function in its $N$ variables, the conclusion~\eqref{eq:duality-FNPhiN} follows.
\end{proof}

\section{Dual (linear) correlations}\label{sec:correl}
Let $F_N$ be a global weak duality solution of the Liouville equation~\eqref{eq:Liouville}--\eqref{eq:Liouville-init}, let $k\ge1$, $T>0$, and $\psi\in C^\infty_c(\D)$ be fixed, and consider a bounded weak solution $\Phi_N\in L^\infty([0,T]\times\D^N)$ of the backward Liouville equation~\mbox{\eqref{eq:PhiN}--\eqref{eq:PhiN-init}} that is in duality with $F_N$.
By Proposition~\ref{lem:dual}, the validity of the mean-field convergence
\begin{equation}\label{Conclu}
\int_{\D^k} \psi^{\otimes k} F_{N,k}(T)\,\xrightarrow{N\uparrow\infty}\,\Big(\int_{\D}\psi f(T)\Big)^k
\end{equation}
is equivalent to the dual convergence property
\begin{equation}\label{eq:red-dual}
N\int_0^T\Big(\int_{\D^N}V_f(z_1,z_2)\,\Phi_N\,f^{\otimes N}\,\Big)\,dt\,\xrightarrow{N\uparrow\infty}\,0,
\end{equation}
and we shall focus on proving the latter.
For that purpose, first note that the definition~\eqref{eq:def-Vf} of $V_f$ satisfies
\begin{equation}\label{eq:cancel-Vf}
\int_{\D} V_f(z_1,z_2)\,f(z_j)\,dz_j=0,\qquad\text{for~$j=1,2$.}
\end{equation}
Using these cancellations, if the dual solution $\Phi_N$ was known to approximately keep the special additive structure of its final condition~\eqref{eq:PhiN-init}, say
\[\Phi_N\,\approx\,\binom{N}{k}^{-1}\sum_{1\le i_1<\ldots<i_k\le N}\phi(z_{i_1})\ldots \phi(z_{i_k}),\]
then we would find
\[N\int_{\D^N}V_f(z_1,z_2)\Phi_Nf^{\otimes N}\,\approx\,\frac{k(k-1)}{N-1}\int_{\D^k}V_f(z_1,z_2)\phi^{\otimes k}f^{\otimes k},\]
which is formally $O(N^{-1})$, thus proving~\eqref{eq:red-dual}.
We are thus led to examining how much the dual solution $\Phi_N$ indeed approximately keeps its additive structure over time. Note that this is the dual version of classical propagation of chaos, which amounts to rather examining how much the {\it primal} solution~$F_N$ remains close to a tensor product.

The defect of linearity of the dual solution $\Phi_N$ is naturally measured by means of linear correlation functions, which we now introduce.
First define the marginals $\{M_{N,n}\}_{0\le n\le N}$ of~$\Phi_N$ with respect to the (time-dependent) mean-field weight $f^{\otimes N}$: for all $0\le n\le N$, we let
\begin{multline}\label{eq:marginals-MNn}
M_{N,n}(z_1,\ldots,z_n)\\
\,:=\,\int_{\D^{N-n}}\Phi_N(z_1,\ldots,z_N)\,f^{\otimes N-n}(z_{n+1},\ldots,z_N)\,dz_{n+1}\ldots dz_N.
\end{multline}
As $\Phi_N$ is a bounded symmetric function in its $N$ variables, each marginal~$M_{N,n}$ is also bounded and symmetric in its $n$ variables. For $n=0$ note that $M_{N,0}=\int_{\D^N}\Phi_Nf^{\otimes N}$ is also a nontrivial quantity.
Next, the corresponding dual linear correlation functions $\{C_{N,n}\}_{0\le n\le N}$ are defined as follows, for all $0\le n\le N$,
\begin{equation}\label{eq:defin-CNn}
C_{N,n}\,:=\,(\operatorname{Id}-\Pi_1)\ldots(\operatorname{Id}-\Pi_n)M_{N,n},
\end{equation}
in terms of the (time-dependent) operators
\begin{equation}\label{eq:defin-Pij}
\Pi_jg_n(z_1,\ldots,z_n)\,:=\,\int_\D g_n(z_1,\ldots,z_n)\,f(z_j)\,dz_j,\quad g_n\in L^\infty(\D^n).
\end{equation}
By definition, each correlation $C_{N,n}$ is bounded, is symmetric in its $n$ variables, and satisfies 
\begin{equation}\label{eq:orthogonality}
\int_{\D}C_{N,n}(z_1,\ldots,z_n)\,f(z_j)\,dz_j=0,\quad\text{for all $1\le j\le n$.}
\end{equation}
For $n\ge0$, we are naturally led to consider the (time-dependent) Hilbert space $L^2_f(\D^n):=L^2(\D^n,f^{\otimes n})$ that is the closure of~$C^\infty_c(\D^n)$ for the (time-dependent) weighted norm
\[\|g_n\|_{L^2_f(\D^n)}\,:=\,\Big(\int_{\D^n}|g_n|^2f^{\otimes n}\Big)^\frac12,\qquad g_n\in C^\infty_c(\D^n).\]
In this Hilbertian setting, we find that $C_{N,n}$ coincides with the orthogonal projection in $L^2_f(\D^n)$ of the marginal $M_{N,n}$ onto the (time-dependent) subspace
\begin{multline}\label{eq:def-Hn}
H_n\,:=\,\Big\{g\in L^2_f(\D^n)\,:\,\int_\D g(z_1,\ldots,z_n)\,f(z_j)\,dz_j=0~\\[-3mm]
\text{for all $1\le j\le n$}\Big\}.
\end{multline}
Expanding the product $(\operatorname{Id}-\Pi_1)\ldots(\operatorname{Id}-\Pi_n)$ in~\eqref{eq:defin-CNn} and noting that the definition~\eqref{eq:marginals-MNn} of marginals amounts to $M_{N,n}=\Pi_{n+1}\ldots\Pi_N\Phi_N$, we find that correlations can be expressed as linear combinations of marginals: for $0\le n\le N$,
\begin{equation}\label{eq:def-CNn-M}
C_{N,n}(z_1,\ldots,z_n)\,=\,\sum_{k=0}^n(-1)^{n-k}\sum_{\sigma\in P_k^n}M_{N,k}(z_\sigma),
\end{equation}
where $P_k^n$ denotes the set of all subsets of $[n]:=\{1,\ldots,n\}$ with~$k$ elements, and where for an index subset $\sigma=\{i_1,\ldots,i_k\}$ we use the short-hand notation $z_\sigma:=(z_{i_1},\ldots,z_{i_k})$.
In addition, writing
\[M_{N,n}=(\operatorname{Id}-\Pi_1+\Pi_1)\ldots(\operatorname{Id}-\Pi_n+\Pi_n)M_{N,n},\]
and similarly expanding the product, we find that correlations satisfy the following so-called cluster expansion: for $0\le n\le N$,
\begin{equation}\label{eq:clusterexpPhin}
M_{N,n}(z_1,\ldots,z_n)\,=\,\sum_{k=0}^n\sum_{\sigma\in P_k^n}C_{N,k}(z_\sigma).
\end{equation}

In terms of the above dual correlation functions, using the cancellations from $V_f$, cf.~\eqref{eq:cancel-Vf}, the quantity~\eqref{eq:red-dual} that we aim to estimate takes on the following guise,
\begin{multline}\label{eq:rewr-red}
N\!\int_0^T\!\Big(\int_{\D^N}V_f(z_1,z_2)\,\Phi_N\,f^{\otimes N}\Big)dt\\
\,=\,N\!\int_0^T\!\Big(\int_{\D^2}V_f \,M_{N,2}\,f^{\otimes2}\Big)dt
\,=\,N\!\int_0^T\!\Big(\int_{\D^2}V_f \,C_{N,2}\,f^{\otimes2}\Big)dt.
\end{multline}
This leads us to the following straightforward consequence of Proposition~\ref{lem:dual}. We formulate it here in the $L^2$ setting of Theorem~\ref{th:main}, but it can be adapted to an arbitrary $L^p$ setting.

\begin{corollary}\label{prop:dual-rep}
Let $K\in L^2_{loc}(\Omega;\R^d)$ and assume that the mean-field solution~$f$ satisfies $K\ast f\in L^\infty([0,T]\times\Omega)$ and $\nabla_v\log f\in L^1(0,T;L^2_f(\D))$.
If there holds
\begin{equation}\label{LimCN2}
NC_{N,2} \overset*\rightharpoonup 0,\quad\text{weakly-* in $L^\infty (0,T;L^2_f(\D^2))$,}
\end{equation}
then we have
\[\int_{\D^k}\psi^{\otimes k}F_{N,k}(T)\,\xrightarrow{N\uparrow\infty}\,\Big(\int_\D\psi f(T)\Big)^k.\]
\end{corollary}

\begin{proof}
The assumptions on $K$ and $f$ are meant to ensure that $V_f$
belongs to $L^1(0,T;L^2_f(\D^2)).$
The convergence property~\eqref{LimCN2} then precisely ensures that~\eqref{eq:rewr-red} tends to $0$ as $N\uparrow\infty$. By identity~\eqref{eq:duality-FNPhiN} in Proposition~\ref{lem:dual}, this yields the conclusion.
\end{proof}

The advantage of our dual reformulation is that we have reduced the problem to that of estimating {\it linear} dual correlations, for which Hilbertian techniques provide valuable a priori estimates.
More precisely, as inspired by~\cite[Proposition~4.2]{BGSR-17}, the following result is a direct consequence of orthogonality and symmetry properties of dual correlations.

\begin{lemma}\label{lem:apriori-sym}
For all $0\le n\le N$, we have
\[\sup_{[0,T]}\,\|C_{N,n}\|_{L^2_{f}(\D^n)}\,\le\,\binom Nn^{-\frac12}\|\psi\|_{L^\infty(\D)}^k.\]
\end{lemma}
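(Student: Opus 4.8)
The plan is to exploit the combinatorial identity \eqref{eq:def-CNn-M} expressing $C_{N,n}$ as a signed sum of marginals $M_{N,k}$, together with the defining formula \eqref{eq:marginals-MNn} for those marginals as averages of $\Phi_N$ against $f^{\otimes(N-k)}$, and then to estimate the resulting $L^2(f^{\otimes n})$-norm purely by Cauchy--Schwarz and exchangeability. The key observation is that $\Phi_N$ is a \emph{bounded} symmetric function with $\|\Phi_N\|_{L^\infty([0,T]\times\D^N)}\le\|\psi\|_{L^\infty(\D)}$ (since the final datum \eqref{eq:PhiN-init} is a convex combination of products $\psi(z_{i_1})\cdots\psi(z_{i_k})$, each bounded by $\|\psi\|_{L^\infty}^k\le\max(1,\|\psi\|_{L^\infty})^k$; one should normalise so that effectively $\|\Phi_N\|_\infty\le\|\psi\|_{L^\infty}$, which is what the statement implicitly uses), and that the backward Liouville flow is an $L^\infty$-contraction so this bound is preserved for all $t\le T$.

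First I would write, from the orthogonality relations \eqref{eq:orthogonality}, that integrating $C_{N,n}(z_1,\dots,z_n)$ against any one of the $f(z_j)\,dz_j$ kills it; hence $C_{N,n}$ is obtained from $\Phi_N$ by applying, in each of the first $n$ variables, the projection $Q_j := \mathrm{Id} - P_j$ where $P_j g := \int_\D g\, f(z_j)\,dz_j$ is the conditional expectation against $f$ in the $j$th slot, and then integrating out the remaining $N-n$ variables against $f$. Concretely,
\[
C_{N,n}(z_1,\dots,z_n)\,=\,\int_{\D^{N-n}}\Big(\prod_{j=1}^n Q_j\Big)\Phi_N\;f^{\otimes(N-n)}(z_{n+1},\dots,z_N)\,dz_{n+1}\cdots dz_N,
\]
which one checks either directly against \eqref{eq:def-CNn-M} or by verifying that the right-hand side satisfies both the cluster expansion \eqref{eq:clusterexpPhin} and the orthogonality \eqref{eq:orthogonality}. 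Each $P_j$ is an orthogonal projection on $L^2(\D,f)$ (as $\int_\D f=1$), so $Q_j$ is too, and these projections commute across distinct slots; therefore the composite operator $\prod_{j=1}^n Q_j$ followed by integration against $f^{\otimes(N-n)}$ is a norm-one (in fact norm $\le 1$) operator from $L^2(\D^N,f^{\otimes N})$ to $L^2(\D^n,f^{\otimes n})$. This immediately gives
\[
\int_{\D^n}|C_{N,n}|^2\,f^{\otimes n}\;\le\;\int_{\D^N}|\Phi_N|^2\,f^{\otimes N}\;\le\;\|\Phi_N\|_{L^\infty}^2\;\le\;\|\psi\|_{L^\infty(\D)}^2,
\]
uniformly on $[0,T]$. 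This already proves the $L^2$ boundedness of $C_{N,n}$, but not the sharp binomial gain; I would record it as the warm-up.

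To extract the factor $\binom{N}{n}^{-1/2}$ I would use exchangeability and an $L^2$-orthogonality between correlation terms of different index sets, exactly in the spirit of~\cite[Proposition~4.2]{BGSR-17}. Plug the cluster expansion \eqref{eq:clusterexpPhin} into $\int_{\D^N}|\Phi_N|^2 f^{\otimes N}$: the orthogonality \eqref{eq:orthogonality} forces $\int_{\D^N} C_{N,n}(z_\sigma)\,\overline{C_{N,m}(z_\tau)}\,f^{\otimes N}=0$ whenever $\sigma\not\subseteq\tau$ and $\tau\not\subseteq\sigma$ — more precisely whenever $\sigma\ne\tau$, since if some index lies in exactly one of them the integral in that slot vanishes. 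Hence the cross terms all die and
\[
\int_{\D^N}|\Phi_N|^2\,f^{\otimes N}\;=\;\sum_{n=0}^N\;\sum_{\sigma\in P_n^N}\int_{\D^n}|C_{N,n}|^2\,f^{\otimes n}\;=\;\sum_{n=0}^N\binom{N}{n}\int_{\D^n}|C_{N,n}|^2\,f^{\otimes n},
\]
using symmetry of $C_{N,n}$ in the last step. Since every summand on the right is nonnegative and the left side is $\le\|\psi\|_{L^\infty}^2$, each individual term obeys $\binom{N}{n}\int_{\D^n}|C_{N,n}|^2 f^{\otimes n}\le\|\psi\|_{L^\infty}^2$, which is exactly the claimed bound after taking square roots. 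In particular, for $n=2$ this gives $\int_{\D^2}|C_{N,2}|^2 f^{\otimes 2}\le\binom{N}{2}^{-1}\|\psi\|_{L^\infty}^2$, so $\|N\,C_{N,2}\|_{L^2(f^{\otimes2})}\lesssim N\binom N2^{-1/2}\|\psi\|_\infty = O(1)$, the weighted $L^2$ boundedness advertised after the statement. The only technical point requiring care — and the main (mild) obstacle — is justifying the vanishing of the cross terms and the interchange of the infinite-in-$N$ (but finite, since $n\le N$) sum with the integral rigorously for merely bounded weak solutions $\Phi_N$; this is handled by noting the sum is finite and all manipulations are pointwise-in-$(t,x)$ Fubini applications valid because $f^{\otimes N}$ is a probability density and $\Phi_N\in L^\infty$, so that every integrand is integrable. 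I would present the projection argument and the orthogonality-of-cross-terms computation as the two half-page lemmas behind the one-line estimate.
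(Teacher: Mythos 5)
Your proof is correct and follows essentially the same route as the paper's: plug the cluster expansion~\eqref{eq:clusterexpPhin} into $\int_{\D^N}|\Phi_N|^2 f^{\otimes N}$, use the orthogonality~\eqref{eq:orthogonality} to kill all cross terms and obtain the Parseval-type identity $\sum_{n}\binom Nn\int_{\D^n}|C_{N,n}|^2 f^{\otimes n}$, then bound the left-hand side via the $L^\infty$-contraction property of bounded weak solutions (Definition~\ref{WeakForm}) and the final condition. You are also right to flag that~\eqref{eq:PhiN-init} only gives $\|\Phi_N(T)\|_{L^\infty}\le\|\psi\|_{L^\infty}^k$ rather than $\|\psi\|_{L^\infty}$ (the paper's stated bound silently absorbs this fixed $k$-dependent constant, or assumes a normalization); the projection warm-up is fine but not needed for the sharp binomial factor.
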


\begin{proof}
By the cluster expansion~\eqref{eq:clusterexpPhin} for $M_{N,N}=\Phi_N$, and by the orthogonality property~\eqref{eq:orthogonality} of correlations, we find
\[\int_{\D^N}|\Phi_N|^2f^{\otimes N}\,=\,\sum_{n=0}^N\binom Nn\int_{\D^n}|C_{N,n}|^2f^{\otimes n}.\]
The left-hand side is bounded above by $\|\Phi_N\|_{L^\infty(\D^N)}^2$, which is controlled on~$[0,T]$ by the final value $\|\Phi_N(T)\|_{L^\infty(\D^N)}^2$ for bounded weak solutions of the backward Liouville equation, cf.~Definition~\ref{WeakForm}(i). Recalling the choice~\eqref{eq:PhiN-init} of the final condition, we deduce on $[0,T]$,
\begin{equation*}
\int_{\D^N}|\Phi_N|^2f^{\otimes N}\,\le\,\|\Phi_N\|_{L^\infty(\D^N)}^2
\,\le\,\|\Phi_N(T)\|_{L^\infty(\D^N)}^2\,\le\,\|\psi\|_{L^\infty(\D)}^{2k},
\end{equation*}
and the conclusion follows.~\end{proof}

For $n=2$, the above a priori estimates only yield the boundedness of~$NC_{N,2}$ in~$L^\infty(0,T;L^2_f(\D^2))$, which is by no means sufficient to deduce the desired convergence property~\eqref{LimCN2}. Yet, by weak compactness, we can at least deduce the following convergence result.

\begin{lemma}\label{weakconv}
Up to extraction of a subsequence as $N\uparrow\infty$, we have the following weak convergences for rescaled correlations: for all $n\ge0$,
\begin{equation}\label{eq:weakconv}
N^\frac{n}2C_{N,n}\,\overset*\rightharpoonup\,n!^{\frac12}\bar C_n,\quad\text{in $L^\infty(0,T;L^2_f(\D^n))$},
\end{equation}
for some limit $\bar C_n\in L^\infty(0,T;L^2_f(\D^n))$ with
\begin{equation}\label{eq:apriori-est-barCn}
\sup_{[0,T]}\,\|\bar C_n\|_{L^2_f(\D^n)}\,\le\,\|\psi\|_{L^\infty(\D)}^k.
\end{equation}
\end{lemma}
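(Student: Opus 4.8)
The plan is to extract everything from the a priori estimate of Lemma~\ref{lem:apriori-sym}, which is the only input needed. First, multiplying that estimate by $N^n$, for each fixed $n\ge0$ we obtain, uniformly on $[0,T]$,
\[\int_{\D^n}\big|N^{n/2}C_{N,n}\big|^2f^{\otimes n}\,\le\,N^n\binom Nn^{-1}\|\psi\|_{L^\infty(\D)}^2,\]
and since
\[N^n\binom Nn^{-1}\,=\,n!\prod_{j=0}^{n-1}\Big(1-\tfrac jN\Big)^{-1}\,\xrightarrow{N\uparrow\infty}\,n!\]
— in particular this combinatorial factor stays bounded in $N$ for each fixed $n$ — the rescaled correlations $\{N^{n/2}C_{N,n}\}_N$ form a bounded sequence in $L^\infty(0,T;L^2(\D^n,f^{\otimes n}))$. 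This is exactly what the $N^{n/2}$ scaling is designed for, and it already foreshadows the normalising factor $n!^{1/2}$ appearing in the statement.

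Next I would apply weak-$*$ compactness. Modulo the isometric substitution $g\mapsto g\,(f^{\otimes n})^{1/2}$, which turns the $t$-dependent weighted space $L^\infty(0,T;L^2(\D^n,f^{\otimes n}))$ into a weak-$*$ closed subspace of the fixed space $L^\infty(0,T;L^2(\D^n,dz))$, this is the dual of the separable Banach space $L^1(0,T;L^2(\D^n,dz))$; hence by Banach--Alaoglu the bounded sequence of the previous step has a weakly-$*$ convergent subsequence. Running this for $n=0,1,2,\dots$ in turn and then passing to a diagonal subsequence produces a single subsequence $N\uparrow\infty$ along which $N^{n/2}C_{N,n}$ converges weakly-$*$ for \emph{every} $n\ge0$; I denote the limit by $n!^{1/2}\bar C_n\in L^\infty(0,T;L^2(\D^n,f^{\otimes n}))$.

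It remains to justify the bound~\eqref{eq:apriori-est-barCn}, which follows from weak-$*$ lower semicontinuity of the norm. Along the extracted subsequence, lower semicontinuity of the essential-supremum-in-time $L^2$ norm together with the rescaled bound of the first step gives
\[n!\int_{\D^n}|\bar C_n(t)|^2f(t)^{\otimes n}\,\le\,\liminf_{N\uparrow\infty}N^n\binom Nn^{-1}\|\psi\|_{L^\infty(\D)}^2\,=\,n!\,\|\psi\|_{L^\infty(\D)}^2\qquad\text{for a.e.\ }t\in[0,T];\]
dividing by $n!$ and then taking the supremum over $n\ge0$ yields~\eqref{eq:apriori-est-barCn}.

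\textbf{Main obstacle.} There is essentially none: all the genuine content lies in Lemma~\ref{lem:apriori-sym}, and the present statement is pure soft analysis — a uniform bound feeding into Banach--Alaoglu, a diagonal extraction over $n$, and lower semicontinuity of the $L^2$ norm. The two points worth a moment's attention are the asymptotics $N^n\binom Nn^{-1}\to n!$, which dictate both the $N^{n/2}$ rescaling and the $n!^{1/2}$ normalisation and make the constant in~\eqref{eq:apriori-est-barCn} come out exactly $\|\psi\|_{L^\infty(\D)}^2$, and the minor bookkeeping required to run weak-$*$ compactness in the $t$-dependent weighted space.
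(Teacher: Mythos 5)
Your proposal is correct and follows essentially the same route as the paper's own proof: derive boundedness of the rescaled correlations from Lemma~\ref{lem:apriori-sym}, apply weak-$*$ compactness, and obtain~\eqref{eq:apriori-est-barCn} by lower semicontinuity. The paper works directly with $\bar C_{N,n}:=\binom{N}{n}^{1/2}C_{N,n}$ while you work with $N^{n/2}C_{N,n}$ and then use $N^n\binom{N}{n}^{-1}\to n!$ — a purely cosmetic difference that matches the normalization stated in~\eqref{eq:weakconv}; you are also a bit more explicit than the paper about the diagonal extraction over $n$ and about reducing the $t$-dependent weighted space to a fixed $L^\infty(0,T;L^2)$ in order to invoke Banach--Alaoglu, which is a reasonable elaboration of a step the paper leaves implicit.
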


\begin{proof}
Consider the rescaled correlations
\begin{equation}\label{eq:def-barC}
\bar C_{N,n}\,:=\,\binom{N}{n}^\frac12C_{N,n},\qquad 0\le n\le N.
\end{equation}
For all $n\ge0$, Lemma~\ref{lem:apriori-sym} ensures that $\bar C_{N,n}$ is bounded as $N\uparrow\infty$ in  $L^\infty(0,T;L^2_f(\D^n))$.
Up to extraction of a subsequence, we thus have $\bar C_{N,n}\overset*\rightharpoonup\bar C_n$ for some $\bar C_n$ in that space, which is equivalent to~\eqref{eq:weakconv}.
In addition, the a priori estimate~\eqref{eq:apriori-est-barCn} on the extracted limit follows from Lemma~\ref{lem:apriori-sym} by weak lower semicontinuity.
\end{proof} 
 
In these terms, the desired convergence~\eqref{LimCN2} is equivalent to showing~$\bar C_2\equiv0$.
For that purpose, a finer analysis of dual correlations is needed. Arguing as in~\cite{Duer}, following characteristics, it is in fact possible to check
\begin{equation}\label{eq:expected-chaos}
C_{N,n}(t)=O(e^{C(T-t)}N^{-n})
\end{equation}
provided that the force kernel $K$ is smooth, which then shows that the a priori estimates of Lemma~\ref{lem:apriori-sym} are strongly suboptimal (at least for $T=O(1)$, cf.~\cite{DuJa}): in particular, we could deduce in that case $\bar C_n\equiv0$ for all $n\ge1$. However, in case of a singular kernel $K$, as considered here, the analysis is much more delicate to handle.
In the next section, we shall proceed by examining the hierarchy of equations satisfied by limit dual correlations $\{\bar C_n\}_n$.

\section{Limit hierarchy for dual correlations}
Recall that the correlation function $C_{N,n}$ coincides with the orthogonal projection of the marginal~$M_{N,n}$ in~$L^2_f(\D^n)$ onto the (time-dependent) subspace $H_n$ defined in~\eqref{eq:def-Hn}.
Instead of deriving a complete equation for~$C_{N,n}$, it will be enough to derive its weak formulation on $L^\infty(0,T;H_n)$, that is, an equation for~$C_{N,n}$ up to a remainder term~$R_{N,n}$ in $H_n^\bot$.
Although this remainder term could be made fully explicit as well, and will indeed be made so later on in the proof of quantitative estimates, cf.\@ Lemma~\ref{lem:BBGKY} below, it is not needed for now and this allows to substantially simplify the computations.

\begin{lemma}\label{propCn}
Assume that the mean-field solution $f$ satisfies $K\ast f\in L^\infty([0,T]\times\Omega)$ and $\nabla_vf\in L^1([0,T]\times\D)$.
Then, for all $0\le n\le N$, we have in the distributional sense on $[0,T]\times\D^n$,
{\footnotesize\begin{align}\label{Ck} 
&\partial_t {C}_{N,n}  
+\sum_{i=1}^n v_i\cdot \nabla_{x_i}{C}_{N,n}
+\alpha\sum_{i=1}^n\Delta_{v_i}C_{N,n}\\
&\nonumber-\frac{N-n}{N-1}\,\sum_{j=1}^n\int_{\D} V_f(z_*,z_j)\,{C}_{N,n} (z_{[n]\setminus \{j\}},z_*)\,f(z_*)\,dz_*\\
&\nonumber+\frac{N-n}{N-1}\,\sum_{i=1}^n (K\ast f)(x_i)\cdot \nabla_{v_i}{C}_{N,n}
+ \frac{1}{N-1}\,\sum_{i\neq j}^n K(x_i-x_j)\cdot \nabla_{v_i} 
 {C}_{N,n}\\
&\nonumber-\frac{(N-n)(N-n-1)}{N-1}\int_{\D^2} V_f(z_*,z_*')\,{C}_{N,n+2}(z_{[n]},z_*,z_*')\,f(z_*)f(z_*')\,dz_*dz_*'\\
&\nonumber+\frac{N-n}{N-1}\,\sum_{i=1}^n \nabla_{v_i}\cdot \int_{\D} \big(K(x_i-x_{*})\,-K* f(x_i)\big)\,{C}_{n+1}(z_{[n]},z_*)\,f(z_*)\,dz_*\\
&\nonumber-\frac{N-n}{N-1}\,\sum_{j=1}^n \int_{\D}V_f(z_{*},z_j)\,{C}_{N,n+1}(z_{[n]},z_*) \,f(z_*)\,dz_{*}\\
&\nonumber+\frac{1}{N-1}\,\sum_{i\neq j}^n K(x_i-x_j)\cdot\nabla_{v_i}{C}_{N,n-1}(z_{[n]\setminus\{j\}})\quad=\quad R_{N,n},
\end{align}
}for some remainder term $R_{N,n}\in W^{-2,1}_{loc}([0,T]\times\D^n)$ that is orthogonal to~$H_n$ in the following weak sense,
\begin{multline*}
\int_0^T\int_{\D^n} h_n R_{N,n}\,=\,0\quad
\text{for all $h_n\in C^\infty_c([0,T]\times\D^n)$}\\
\text{such that $\int_\D h_n(t,z_{[n]})\,dz_j = 0$ a.e.\@ for all $1\le j\le n$},
\end{multline*}
and where we have set $C_{N,-1},C_{N,N+1},C_{N,N+2}\equiv0$ for notational convenience.
\end{lemma} 

\begin{proof}
First note that the assumptions on $f$ ensure that the function $z\mapsto\int_\D |V_f(\cdot,z)|f$ is locally integrable.
We start by deriving the BBGKY-type hierarchy of equations satisfied by dual marginals: by definition~\eqref{eq:marginals-MNn}, combining the equation for~$\Phi_N$ and the mean-field equation for $f$,
we find for all $0\le n\le N$,
{\footnotesize\begin{align}\label{eq:BBGKY-MNn0}
&\partial_tM_{N,n}
+\sum_{i=1}^n v_i\cdot\nabla_{x_i}M_{N,n}
+\alpha\sum_{i=1}^n\Delta_{v_i}M_{N,n}\\[-1mm]
&\nonumber\,=\,
-\frac1{N-1}\sum_{i\ne j}^nK(x_i-x_j)\cdot\nabla_{v_i}M_{N,n}\\[-1mm]
&\nonumber+\frac{N-n}{N-1}\sum_{j=1}^n\int_{\D}
V_f(z_*,z_j)\,M_{N,n+1}(z_{[n]},z_*)\, f(z_*)\,dz_*\\
&\nonumber-\frac{N-n}{N-1}\sum_{i=1}^n\int_{\D} K(x_i-x_*)\cdot\nabla_{v_i}M_{N,n+1}(z_{[n]},z_*)\, f(z_*)\,dz_*\\
&\nonumber+\frac{(N-n)(N-n-1)}{N-1}\int_{\D^2} V_f(z_*,z_*')\,M_{N,n+2}(z_{[n]},z_*,z_*')\,f(z_*)f(z_*')\,dz_*dz_*',
\end{align}
}where we have set $M_{N,N+1},M_{N,N+2}\equiv0$ for notational convenience.  
It remains to derive corresponding equations for correlations.
For that purpose, we note that by definition~\eqref{eq:defin-CNn} of correlations we have for all $1\le n\le N$, $t\in[0,T]$, and $p_n\in L^1(\D^n)$,
\begin{multline}\label{eq:MCg-ortho-re}
\int_{\D^n}p_nM_{N,n}(t)\,=\,\int_{\D^n}p_nC_{N,n}(t)\\
\text{provided that}~~\int_{\D}p_n(z_{[n]})\,dz_j=0\quad\text{a.e.\@ for all $1\le j\le n$}.
\end{multline}
Let a smooth test function $h_n\in C^\infty_c([0,T]\times\D^n)$ be fixed with
\begin{equation}\label{eq:choice-hn}
\int_\D h_n(t,z_{[n]})\,dz_j = 0\quad\text{for all $1\le j\le n$}.
\end{equation}
In the weak sense on $[0,T]$, we can decompose
{\small\begin{multline*}
\tfrac{d}{dt}\bigg(\int_{\D^n}h_nC_{N,n}-\int_{\D^n}h_nM_{N,n}\bigg)
=\bigg(\int_{\D^n}h_n\partial_tC_{N,n}-\int_{\D^n}h_n\partial_tM_{N,n}\bigg)\\
+\bigg(\int_{\D^n}C_{N,n}\partial_th_n-\int_{\D^n}M_{N,n}\partial_th_n\bigg).
\end{multline*}
}By~\eqref{eq:MCg-ortho-re} and by the choice~\eqref{eq:choice-hn} of $h_n$, both the left-hand side and the second right-hand side term vanish, hence
\begin{equation*}
\int_{\D^n}h_n\partial_tC_{N,n}\,=\,\int_{\D^n}h_n\partial_tM_{N,n}.
\end{equation*}
Inserting equation~\eqref{eq:BBGKY-MNn0}, we get
\begin{multline}\label{eq:proof-eqnCNn}
\int_{\D^n} h_n\partial_t{C}_{N,n}\\[-2mm]
\,=\,T_1+T_2+\frac{N-n}{N-1}(T_3+T_4)+\frac{(N-n)(N-n-1)}{N-1}T_5,
\end{multline}
where we have set for abbreviation
{\small\begin{eqnarray*}
T_1 &\!:=\!&\int_{\D^n}M_{N,n}\bigg(\sum_{i=1}^n v_i \cdot \nabla_{x_i}+\frac{1}{N-1} \sum_{i\ne j}^n K(x_i-x_j)\cdot\nabla_{v_i}\bigg)h_n,\\[-1mm]
T_2&\!:=\!&-\alpha\sum_{i=1}^n\int_{\D^n} M_{N,n}\,\Delta_{v_i}h_n,\\[-1mm]
T_3&\!:=\!&\sum_{j=1}^n\int_{\D^{n+1}}V_f(z_{n+1},z_j)\,h_n(z_{[n]})M_{N,n+1}(z_{[n+1]})f(z_{n+1})\,dz_{[n+1]},\\[-1mm]
T_4&\!:=\!&\sum_{i=1}^n\int_{\D^{n+1}}K(x_i-x_{n+1})\cdot\nabla_{v_i} h_n(z_{[n]}) M_{N,n+1}(z_{[n+1]})\,f(z_{n+1})\,dz_{[n+1]},\\[-1mm]
T_5&\!:=\!&\int_{\D^{n+2}}V_f(z_{n+1},z_{n+2})h_n(z_{[n]})M_{N,n+2}(z_{[n+2]})\,f(z_{n+1})f(z_{n+2})\,dz_{[n+2]}.
\end{eqnarray*}
}In order to derive the desired equation for $C_{N,n}$, it remains to re-express these five quantities $\{T_j\}_{1\le j\le 5}$ in terms of correlations instead of marginals.
Inserting the cluster expansion~\eqref{eq:clusterexpPhin}, using the cancellation properties~\eqref{eq:choice-hn} of the test function $h_n$, as well as~\eqref{eq:cancel-Vf}, we immediately find
{\small\begin{eqnarray*}
T_1 &\!=\!&\int_{\D^n}C_{N,n}\bigg(\sum_{i=1}^n v_i \cdot \nabla_{x_i}+\frac{1}{N-1} \sum_{i\ne j}^n K(x_i-x_j)\cdot\nabla_{v_i}\bigg)h_n\\[-1mm]
&&+\frac{1}{N-1} \sum_{i\ne j}^n\int_{\D^n}C_{N,n-1}(z_{[n]\setminus\{j\}})K(x_i-x_j)\cdot\nabla_{v_i}h_n,\\
T_2&\!=\!&-\alpha\sum_{i=1}^n\int_{\D^n} C_{N,n}\,\Delta_{v_i}h_n,\\
T_3&\!=\!&\sum_{j=1}^n\int_{\D^{n+1}}V_f(z_{n+1},z_j)\,h_n(z_{[n]})\,C_{N,n+1}(z_{[n+1]})f(z_{n+1})\,dz_{[n+1]}\\[-2mm]
&&+\sum_{j=1}^n\int_{\D^{n+1}}V_f(z_{n+1},z_j)\,h_n(z_{[n]})\,C_{N,n}(z_{[n+1]\setminus\{j\}})f(z_{n+1})\,dz_{[n+1]},\\
T_4&\!=\!&\sum_{i=1}^n\int_{\D^{n+1}}K(x_i-x_{n+1})\cdot\nabla_{v_i} h_n(z_{[n]})\,C_{N,n+1}(z_{[n+1]})\,f(z_{n+1})\,dz_{[n+1]}\\[-1mm]
&&+\sum_{i=1}^n\int_{\D^{n}}K\ast f(x_i)\cdot\nabla_{v_i} h_n(z_{[n]}) \,C_{N,n}(z_{[n]})\,dz_{[n]},\\[1mm]
T_5&\!=\!&\int_{\D^{n+2}}V_f(z_{n+1},z_{n+2})\,h_n(z_{[n]})\,C_{N,n+2}(z_{[n+2]})\,f(z_{n+1})f(z_{n+2})\,dz_{[n+2]}.
\end{eqnarray*}
}Combining these different identities together into~\eqref{eq:proof-eqnCNn}, and denoting by $R_{N,n}$ the left-hand side in~\eqref{Ck}, we are precisely led to
\[\int_{\D^n}h_nR_{N,n}\,=\,0.\]
As this holds for any test function $h_n$ satisfying~\eqref{eq:choice-hn}, the conclusion follows.
By definition of $R_{N,n}$ as the left-hand side in~\eqref{Ck}, recalling that correlations are bounded functions, we note that $R_{N,n}$ belongs to $W^{-2,1}_{loc}([0,T]\times\D^n)$ as stated.
\end{proof}


Appealing to Lemma~\ref{weakconv}
to pass to the limit in the above hierarchy of equations for correlations, we are led to a hierarchy for limit rescaled correlations.
Note that this limit hierarchy propagates the structure of~$H_n$, which is why the remainder term $R_{N,n}\in H_n^\bot$ in the $N$-dependent hierarchy must vanish as $N\uparrow\infty$.

\begin{lemma}\label{propbarCn}
Let $K\in L^2_{loc}(\Omega;\R^d)$, assume that the mean-field solution~$f$ satisfies $K\ast f\in L^\infty([0,T]\times\Omega)$ and $\nabla_v\log f\in L^1(0,T;L^2_f(\D))$,
and denote by~$\{\bar C_n\}_n$ the limit rescaled correlations extracted in Lemma~\ref{weakconv}.
Then, for all $n\ge0$, we have in the distributional sense on $[0,T]\times\D^n$,
{\small\begin{multline}
\label{eq:lim-hier1}
\partial_t\bar C_{n}
+\sum_{i=1}^nv_i\cdot\nabla_{x_i}\bar C_{n}
+\sum_{i=1}^n (K\ast f)(x_i)\cdot\nabla_{v_i}\bar C_{n}
+\alpha\sum_{i=1}^n\Delta_{v_i}\bar C_n\\
\,= \,\sum_{j=1}^n\int_\D V_f(z_*,z_j)\,\bar C_{n}(z_{[n]\setminus\{j\}},z_*)\,f(z_*)\,dz_*\\
+\sqrt{n+1}\,\sqrt{n+2}\int_{\D^2} V_f(z_*,z_*')\,\bar C_{n+2}(z_{[n]},z_*,z_*')\,f(z_*)f(z_*')\,dz_*dz_*',
\end{multline}}
with final condition
\begin{equation}\label{eq:lim-hier-init0}
\bar C_{n}|_{t=T}\,=\,\left\{\begin{array}{lll}
(\int_\D \psi f(T))^k,&\text{for $n=0$},\\[1mm]
0,&\text{for $n\ge1$}
\end{array}\right.
\end{equation}
\end{lemma}

\begin{proof}
As assumptions on $K$ and $f$ ensure $V_f\in L^1(0,T;L^2_{f}(\D^2))$, we can pass to the limit in the weak formulation of equations for correlations given in Lemma~\ref{propCn} along the limit extracted in Lemma~\ref{weakconv}. We deduce that this limit~$\{\bar C_n\}_n$ satisfies the following hierarchy: for all $n\ge0$, we have in the distributional sense on $[0,T]\times\D^n$,
{\small\begin{multline}
\label{eq:lim-hier}
\partial_t\bar C_{n}
+\sum_{i=1}^nv_i\cdot\nabla_{x_i}\bar C_{n}
+\sum_{i=1}^n (K\ast f)(x_i)\cdot\nabla_{v_i}\bar C_{n}
+\alpha\sum_{i=1}^n\Delta_{v_i}\bar C_n\\
-\sum_{j=1}^n
\int_\D V_f(z_*,z_j)\,\bar C_{n}(z_{[n]\setminus\{j\}},z_*)\,f(z_*)\,dz_*\\
-\sqrt{n+1}\,\sqrt{n+2}\int_{\D^2} V_f(z_*,z_*')\,\bar C_{n+2}(z_{[n]},z_*,z_*')\,f(z_*)f(z_*')\,dz_*dz_*'
\,=\, \bar R_n,
\end{multline}
}for some remainder term $\bar R_n\in W^{-2,1}_{loc}([0,T]\times \D^n)$
that is again orthogonal to $H_n$ in the following weak sense,
\begin{multline}\label{eq:ortho-barRn}
\int_0^T\int_{\D^n}
h_n \bar R_n\, =\, 0\quad\text{for all $h_n\in C^\infty_c([0,T]\times\D^n)$}\\
\text{such that $\int_\D h_n(t,z_{[n]})\,dz_j=0$ for all $1\le j\le n$.}
\end{multline}
To obtain the prefactor in the last left-hand side term in~\eqref{eq:lim-hier}, we have simply noted that
\[\frac{(N-n)(N-n-1)}{N-1}\,{\binom{N}{n}}^{\frac12}\,{\binom{N}{n+2}}^{-\frac12}\,\xrightarrow{N\uparrow\infty}\, \sqrt{n+1}\,\sqrt{n+2}.\]
From here, we split the proof into two steps.

\medskip\noindent
{\bf Step~1:} proof of~\eqref{eq:lim-hier1}.\\
In order to establish equation~\eqref{eq:lim-hier1}, it remains to show that the remainder term in~\eqref{eq:lim-hier} actually vanishes: $\bar R_n\equiv0$ for all $n\ge0$.
Up to an approximation argument, we may assume for convenience that $f$ further satisfies $\int_0^T\int_\D|v|f<\infty$,
and we shall show in that setting
\begin{equation}\label{barR0}
\int_0^T\int_{\D^n} g_n\bar R_n f^{\otimes n}\,=\, 0,\qquad\text{for all $g_n\in C^\infty_c([0,T]\times \D^n)$}.
\end{equation}
With the additional assumption on $f$, we first emphasize that the product $\int_0^T\int_{\D^n} g_n\bar R_n f^{\otimes n}$ makes perfect sense for all $g_n\in C^2_b([0,T]\times\D^n)$: indeed, for $\bar R_n$ defined as the left-hand side in~\eqref{eq:lim-hier}, testing with $g_nf^{\otimes n}$, using the mean-field equation for $f$, and recognizing in the first line of~\eqref{eq:lim-hier} the dual of the mean-field operator, we obtain up to an approximation argument, for all $g_n\in C^2_b([0,T]\times\D^n)$,
{\small\begin{multline*}
\int_0^T\int_{\D^n}g_n\bar R_nf^{\otimes n}\,=\,\int_{\D^n}(g_n \bar C_nf^{\otimes n})(T)-\int_{\D^n}(g_n \bar C_nf^{\otimes n})(0)\\
-\int_0^T\int_{\D^n}\bigg(\partial_tg_n+\sum_{i=1}^nv_i\cdot\nabla_{x_i}g_n+\sum_{i=1}^n (K\ast f)(x_i)\cdot\nabla_{v_i}g_n-\alpha\sum_{i=1}^n\Delta_{v_i}g_n\bigg) \bar C_nf^{\otimes n}\\
+2\alpha\int_0^T\int_{\D^n}\bar C_n\sum_{i=1}^n\nabla_{v_i}g_n\cdot\nabla_{v_i}f^{\otimes n}\\[-2mm]
-\sum_{j=1}^n
\int_0^T\int_{\D^{n+1}}V_f(z_{n+1},z_j)\,g_n(z_{[n]})\,\bar C_{n}(z_{[n+1]\setminus\{j\}})\,f^{\otimes n+1}(z_{[n+1]})\,dz_{[n+1]}\\[-2mm]
-\sqrt{n+1}\,\sqrt{n+2}\int_0^T\int_{\D^{n+2}}V_f(z_{n+1},z_{n+2})\,g_n(z_{[n]}) \\
\times\bar C_{n+2}(z_{[n+2]})\,f^{\otimes n+2}(z_{[n+2]})\,dz_{[n+2]},
\end{multline*}
}where all the terms make sense by the assumptions on $f$ and $K$ and by the boundedness of correlation functions.
In particular, from this identity, recalling that just like $\bar C_{N,n}$ the limit correlation~$\bar C_n$ satisfies $\int_\D \bar C_n(z_{[n]})\,f(z_j)\,dz_j=0$ a.e.\@ for all $1\le j\le n$, and using the cancellation properties~\eqref{eq:cancel-Vf} for~$V_f$, we find in the distributional sense
\begin{equation}\label{eq:cancel-Rnf}
\int_\D\bar R_n(z_{[n]})\,f(z_j)\,dz_j=0,\qquad\text{for all $1\le j\le n$}.
\end{equation}
Given a fixed test function $g_n\in C^\infty_c([0,T]\times\D^n)$, consider the associated correlation function
\[c_n\,:=\,(\Id-\Pi_1)\ldots(\Id-\Pi_n)g_n,\]
with $\{\Pi_j\}_j$ defined in~\eqref{eq:defin-Pij}.
In these terms, the cancellation properties~\eqref{eq:cancel-Rnf} for $\bar R_n$ imply
\[\int_0^T\int_{\D^n} g_n\bar R_n f^{\otimes n}\,=\,\int_0^T\int_{\D^n} c_{n}\bar R_n f^{\otimes n}.\]
Now by definition we have $\int_\D c_{n}(t,z_{[n]})\,f(z_j)\,dz_j=0$ for all $1\le j\le n$, so that the orthogonality property~\eqref{eq:ortho-barRn} then yields the claim~\eqref{barR0}.

\medskip\noindent
{\bf Step~2:} proof of final condition~\eqref{eq:lim-hier-init0}.\\
From the final condition~\eqref{eq:PhiN-init}, we find for marginals
\begin{multline}\label{eq:MNn-final}
M_{N,n}(T,z_1,\ldots,z_n)\\
\,=\,\binom{N}k^{-1}\sum_{l=0}^{k\wedge n}\binom{N-n}{k-l}\Big(\int_\D\psi f(T)\Big)^{k-l}\sum_{\sigma\in P^n_l}\psi^{\otimes l}(z_\sigma).
\end{multline}
Recalling~\eqref{eq:MCg-ortho-re}, given $p_n\in C^\infty_c(\D^n)$ with $\int_\D p_n(z_{[n]})\,dz_j=0$ for all $1\le j\le n$, we find
\begin{multline*}
\int_{\D^n}p_nC_{N,n}(T)\,=\,\int_{\D^n}p_nM_{N,n}(T)\\
\,=\,1_{k\ge n}\binom{N}k^{-1}\binom{N-n}{k-n}\Big(\int_\D\psi f(T)\Big)^{k-n}\int_{\D^n} p_n\psi^{\otimes n}.
\end{multline*}
After rescaling, letting $N\uparrow\infty$ along the limit extracted in Lemma~\ref{weakconv}, we get
\[\bar C_{0}(T)\,=\,\Big(\int_\D\psi f(T)\Big)^k,\qquad \int_{\D^n}p_n\bar C_{n}(T)\,=\,0,\quad \text{for all $n\ge1$},\]
and the claim~\eqref{eq:lim-hier-init0} follows.
\end{proof}

As shown in the above lemma, limit rescaled correlations $\{\bar C_n\}_n$ satisfy some limit hierarchy with final condition $\bar C_n|_{t=T}=0$ for all $n\ge1$.
In order to deduce $\bar C_n\equiv0$ for all $n\ge1$, it suffices to prove uniqueness for this limit hierarchy, which is the content of the following result.

\begin{lemma}\label{lem:unique-hier}
Let $K\in L^2_{loc}(\Omega;\R^d)$ and assume that the mean-field solution~$f$ satisfies $K\ast f\in L^\infty([0,T]\times\D)$ and $\nabla_v\log f\in L^1(0,T;L^2_f(\D))$.
If $\{C_n\}_n$ satisfies the limit hierarchy~\eqref{eq:lim-hier1} in the distributional sense on~$[0,T]$ with vanishing final condition $C_n|_{t=T}=0$ for all $n\ge0$ and with the a priori estimate
\begin{equation}\label{eq:Cn'-bnd}
\sup_{n\ge0}\,\sup_{[0,T]}\,\|C_n\|_{L^2_f(\D^n)}\,<\,\infty,
\end{equation}
then we have $C_n\equiv0$ for all $n$.
\end{lemma}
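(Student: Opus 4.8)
\emph{Overall strategy.} The plan is to run a weighted energy estimate on the hierarchy~\eqref{eq:lim-hier1}, close it by a Cauchy--Kovalevskaya-type iteration over a scale of weighted $\ell^1$-norms on the correlations, and then propagate the vanishing backward in time over finitely many subintervals. Throughout write $e_n(t):=\int_{\D^n}|C_n(t)|^2 f(t)^{\otimes n}$, let $M:=\sup_{t,n}e_n(t)<\infty$ be the constant in~\eqref{eq:Cn'-bnd}, and set $\beta(s):=\|V_f(s)\|_{L^2(\D^2,f^{\otimes2})}$, which belongs to $L^1(0,T)$ under the stated assumptions ($V_f$ as in~\eqref{eq:def-Vf}). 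First I would test~\eqref{eq:lim-hier1} against $C_n\,f^{\otimes n}$ and simultaneously use the mean-field equation for $f$, hence for $f^{\otimes n}$: the free-transport and mean-field-drift terms assemble into $-\sum_i\int_{\D^n}\mathrm{div}_{x_i,v_i}(\vec b_i\,C_n^2 f^{\otimes n})=0$ and cancel, whereas the velocity-Laplacian terms integrate by parts into $+2\alpha\sum_i\int_{\D^n}|\nabla_{v_i}C_n|^2 f^{\otimes n}$. This yields
\[\frac{d}{dt}e_n(t)=2\alpha\sum_{i=1}^n\int_{\D^n}|\nabla_{v_i}C_n|^2 f^{\otimes n}+2\int_{\D^n}C_n\big(G_n+H_{n+2}\big)f^{\otimes n},\]
where $G_n$ and $H_{n+2}$ denote the ``loss'' (in $C_n$) and ``gain'' (in $C_{n+2}$) right-hand side operators of~\eqref{eq:lim-hier1}. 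Integrating backward from $T$, using $e_n(T)=0$ and discarding the nonnegative diffusion term, this gives the one-sided bound $e_n(t)\le 2\int_t^T|\int_{\D^n}C_n(G_n+H_{n+2})f^{\otimes n}|\,ds$; note that $\alpha>0$ is not used here.

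\emph{Bilinear estimates and the closed system.} Two bilinear bounds then feed into this. By symmetry $\int_{\D^n}C_nG_nf^{\otimes n}=n\int_{\D^n}C_n(z_{[n]})\big(\int_\D V_f(z_*,z_n)C_n(z_{[n-1]},z_*)f(z_*)\,dz_*\big)f^{\otimes n}$, and two Cauchy--Schwarz inequalities (first in $z_*$, then over $\D^n$), together with $\int_{\D^2}|V_f(z_*,z)|^2 f(z_*)f(z)\,dz_*\,dz=\beta(t)^2$, give $|\int_{\D^n}C_nG_nf^{\otimes n}|\le n\,\beta(t)\,e_n(t)$. For the gain term, $\int_{\D^n}C_nH_{n+2}f^{\otimes n}=\sqrt{(n+1)(n+2)}\int_{\D^{n+2}}C_n(z_{[n]})V_f(z_{n+1},z_{n+2})C_{n+2}(z_{[n+2]})f^{\otimes(n+2)}$, and since the function $z_{[n+2]}\mapsto C_n(z_{[n]})V_f(z_{n+1},z_{n+2})$ has $L^2(f^{\otimes(n+2)})$-norm $e_n(t)^{1/2}\beta(t)$, Cauchy--Schwarz against $C_{n+2}$ gives $|\int_{\D^n}C_nH_{n+2}f^{\otimes n}|\le\sqrt{(n+1)(n+2)}\,\beta(t)\,e_n(t)^{1/2}e_{n+2}(t)^{1/2}$. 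Both bounds use only $\beta\in L^1_t$ and no integration by parts (the singular kernel enters solely through $V_f\in L^1(0,T;L^2(\D^2,f^{\otimes2}))$), so again $\alpha>0$ is irrelevant. Inserting these into the energy estimate and using $\sqrt{(n+1)(n+2)}\le n+2$ and $e_n^{1/2}e_{n+2}^{1/2}\le\tfrac12(e_n+e_{n+2})$, one reaches the closed system
\[e_n(t)\le 6(n+1)\int_t^T\beta(s)\big(e_n(s)+e_{n+2}(s)\big)\,ds,\qquad n\ge0,\ t\in[0,T].\]

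\emph{Cauchy--Kovalevskaya iteration and propagation.} The upward coupling together with the prefactor $n+1$ rules out a naive Gronwall argument; instead I would work over the scale $\mathcal E_\rho(t):=\sum_{n\ge0}\rho^n e_n(t)$, $\rho\in(0,1)$, which is finite by~\eqref{eq:Cn'-bnd}: $\mathcal E_\rho(t)\le M/(1-\rho)$. Multiplying the closed system by $\rho^n$, summing, reindexing the $e_{n+2}$-sum, and using $\sup_k(k+1)(\rho/\rho')^k\lesssim(\rho'-\rho)^{-1}$ for $\rho<\rho'$, one obtains, uniformly for $\tfrac14\le\rho<\rho'\le\tfrac34$, a constant $C_{\rho,\rho'}\le K_*/(\rho'-\rho)$ (with $K_*$ universal) such that $\mathcal E_\rho(t)\le C_{\rho,\rho'}\int_t^T\beta(s)\,\mathcal E_{\rho'}(s)\,ds$. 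Iterating this inequality along the chain $\rho_j=\tfrac14+\tfrac{j}{2m}$, $j=0,\dots,m$ (each constant $\le 2K_*m$), collapsing the iterated time integral over the ordered simplex into $\tfrac1{m!}(\int_t^T\beta)^m$, and using $\mathcal E_{3/4}\le 4M$, one gets $\mathcal E_{1/4}(t)\le 4M\,(2K_*m)^m\tfrac1{m!}(\int_t^T\beta)^m\le 4M\,(2eK_*\int_t^T\beta)^m$, which tends to $0$ as $m\to\infty$ whenever $\int_t^T\beta<\tfrac1{2eK_*}$. Since $\beta\in L^1(0,T)$, this holds on $[T-\delta,T]$ for some $\delta>0$, so $e_n\equiv0$ there for every $n$. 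Finally, covering $[0,T]$ by finitely many intervals $[\tau_{i+1},\tau_i]$ with $\int_{\tau_{i+1}}^{\tau_i}\beta<\tfrac1{2eK_*}$ and rerunning the same three steps on each of them (with vanishing ``final'' data $e_n(\tau_i)=0$, valid by downward induction on $i$), one concludes $e_n\equiv0$ on $[0,T]$, i.e.\ $C_n\equiv0$ for all $n$.

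\emph{Main obstacle.} The genuine difficulty is not the algebra above but the rigorous justification of the energy identity: $C_n$ is only known to be a distributional solution in $L^\infty_tL^2(\D^n,f^{\otimes n})$ with singular coefficients ($K\in L^2_{loc}$, and $\nabla_v\log f$ merely in $L^1_tL^2(f)$), so one must mollify $C_n$ (and, if necessary, $f$), establish the identity for the regularizations, and control the resulting commutators — which is precisely where the $L^2$ integrability of $K$ and the bound on the Fisher information of $f$ are used. For $\alpha>0$ there is the additional point of making sense of $\int|\nabla_{v_i}C_n|^2 f^{\otimes n}$; since it intervenes only through its nonnegativity, it can be kept on the favourable side of every inequality throughout the approximation and never needs to be estimated. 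The combinatorial and summation steps are then routine.
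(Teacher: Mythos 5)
Your proof is correct, and it uses exactly the same energy engine as the paper: test the hierarchy against $C_n f^{\otimes n}$, use the mean-field equation for $f$ to make the transport and drift terms cancel, exploit the favourable sign of the (backward-in-time) diffusion, and close the bilinear terms by Cauchy--Schwarz against the weight $\Lambda_f(t):=\|V_f(t)\|_{L^2(\D^2,f^{\otimes2})}$ (your $\beta$). You also identify the same real difficulty, namely the rigorous justification of the energy identity with only distributional solutions and singular coefficients; the paper's proof glosses over this exactly as you do.

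Where the two arguments genuinely diverge is the closure across the hierarchy. The paper works with the square roots $c_n:=e_n^{1/2}$, so that the Cauchy--Schwarz bounds yield directly the one-sided differential inequality
\[
\frac{d}{dt}c_n\ge -n\Lambda_f\,c_n-(n+2)\Lambda_f\,c_{n+2},
\]
and then it packages the whole hierarchy into a single generating function $Z(t,r)=\sum_n r^n c_n(t)$, which satisfies $\partial_t Z\ge-2\Lambda_f\,r^{-1}\partial_r Z$. Solving this scalar transport inequality by the method of characteristics (so that $r^2$ decreases by $4\int\Lambda_f$ going backward) immediately gives $Z(T_0,r)\le Z\bigl(T,(r^2+4\int_{T_0}^T\Lambda_f)^{1/2}\bigr)=0$ as soon as the right endpoint stays below~$1$, followed by the same time-interval iteration you use. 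Your version keeps $e_n$ (hence needs the extra Young step $e_n^{1/2}e_{n+2}^{1/2}\le\frac12(e_n+e_{n+2})$) and replaces the single PDE-on-$Z$ step by a Nishida/Nirenberg-type Cauchy--Kovalevskaya iteration over the scale $\mathcal E_\rho=\sum_n\rho^n e_n$, $\rho\in[\tfrac14,\tfrac34]$, extracting the smallness from the simplex factor $\frac1{m!}(\int\beta)^m$ against the $m^m$ growth of the constants. The two devices are interchangeable here precisely because the coupling grows only linearly in $n$; the generating-function route is a bit slicker and gives a sharper threshold ($r^2+4\int\Lambda_f<1$, i.e.\ $\int\Lambda_f<\tfrac18$ at $r=\tfrac12$), but your scale-iteration is equally valid and arguably more familiar from the BBGKY/Gross--Pitaevskii uniqueness literature. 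Either way the final conclusion and the time-covering argument are identical.
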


\begin{proof}
Let $\{C_n\}_n$ be as in the statement.
As it satisfies the hierarchy~\eqref{eq:lim-hier1}, we can compute in the weak sense on $[0,T]$,
{\footnotesize\begin{multline*}
\tfrac12\tfrac{d}{dt}\int_{\D^n}| C_{n}|^2f^{\otimes n}\,=\,\alpha\sum_{i=1}^n\int_{\D^n}|\nabla_{v_i}C_n|^2f^{\otimes n}\\
+n\int_{\D^{n+1}} V_f(z,z')\,  C_{n}(z_{[n-1]},z)\,  C_{n}(z_{[n-1]},z')\,f^{\otimes (n+1)}(z_{[n-1]},z,z')\,dz_{[n-1]}dzdz'\\
+\sqrt{n+1}\sqrt{n+2}\int_{\D^{n+2}}\! V_f(z_{n+1},z_{n+2})\, C_{n}(z_{[n]})\, C_{n+2}(z_{[n+2]})\,f^{\otimes(n+2)}(z_{[n+2]})\,dz_{[n+2]},
\end{multline*}
}and thus, by the Cauchy--Schwarz inequality,
{\small\begin{equation*}
\tfrac{d}{dt}\|C_n\|_{L^2_f(\D^n)}
\,\ge\,
-n\Lambda_f\|C_n\|_{L^2_f(\D^n)}
-(n+2)\Lambda_f\|C_{n+2}\|_{L^2_f(\D^{n+2})},
\end{equation*}
}where we have set for abbreviation $\Lambda_f:=(\int_{\D^2} |V_f|^2f^{\otimes2})^{\frac12}$.
In terms of the generating function
{\small\[Z(t,r)\,:=\,\sum_{n=0}^\infty r^n\|C_n\|_{L^2_f(\D^n)},\]
}which is well-defined for all $t\in[0,T]$ and $r\in[0,1)$ by the a priori estimate~\eqref{eq:Cn'-bnd}, we get
{\small\begin{equation*}
\partial_tZ(t,r)\,\ge\,-\Lambda_f(t)\Big(r\partial_rZ(t,r)+r^{-1}\partial_r Z(t,r)\Big)\,\ge\,-2\Lambda_f(t)r^{-1}\partial_r Z(t,r).
\end{equation*}
}Solving this differential inequality by the method of characteristics, with the final condition $Z(T,\cdot)=0$,
we deduce for all $r\in[0,1)$ and $T_0\in[0,T]$, provided that $r^2+4\int_{T_0}^T\Lambda_f<1$,
\[0\,\le\, Z(T_0,r)\,\le\, Z\Big(T,\big(r^2+4{\textstyle\int_{T_0}^T\Lambda_f}\big)^\frac12\Big)\,=\,0.\]
Recalling that the assumptions on $K$ and $f$ ensure $\int_0^T\Lambda_f<\infty$, we can find $T_0<T$ with $\int_{T_0}^T\Lambda_f<\frac18$. For this choice of $T_0$, the above yields $Z(t,r)=0$ for all $t\in[T_0, T]$ and $r\in[0,\frac12]$, which means $C_n(t)=0$ for all $n\ge0$ and $t\in[T_0, T]$.
Iterating this argument over successive time intervals, we conclude $C_n(t)=0$ for all $n\ge0$ and $t\in[0,T]$.
\end{proof}

Combining the above different observations, we are now in position to conclude the proof of Theorem~\ref{th:main}.

\begin{proof}[Proof of Theorem~\ref{th:main}]
By Lemma~\ref{propbarCn}, the limit rescaled correlations $\{\bar C_n\}_n$ extracted in Lemma~\ref{weakconv} satisfy the limit hierarchy~\eqref{eq:lim-hier1} as well as the a priori estimates~\eqref{eq:apriori-est-barCn}.
Now, using the cancellation properties~\eqref{eq:cancel-Vf} for~$V_f$, a trivial solution of the hierarchy~\eqref{eq:lim-hier1} with the same final condition~\eqref{eq:lim-hier-init0} is actually given by the constant solution
\[C_n(t)\,:=\,\bar C_{n}|_{t=T}\,=\,\left\{\begin{array}{lll}
(\int_\D \psi f(T))^k,&\text{for $n=0$},\\[1mm]
0,&\text{for $n\ge1$}.
\end{array}\right.\]
The uniqueness result of Lemma~\ref{lem:unique-hier} then entails that the extracted limit $\{\bar C_n\}_n$ is necessarily equal to this trivial solution. In particular, this means $\bar C_2\equiv0$, thus proving the convergence~\eqref{LimCN2} independently of extractions. By Corollary~\ref{prop:dual-rep}, this entails $\int_{\D^k}\psi^{\otimes k}F_{N,k}(T)\to(\int_\D\psi f(T))^k$. By the arbitrariness of $k\ge1$, $T>0$, and $\psi\in C^\infty_c(\D)$, this concludes the proof.
\end{proof}

\section{Quantitative estimates}\label{sec:quant}
This section is devoted to the proof of Theorem~\ref{theor:quant}. While the qualitative convergence of Theorem~\ref{th:main} was deduced from a uniqueness result for the limit hierarchy~\eqref{eq:lim-hier1} for dual correlations, quantitative estimates require a stability analysis for the latter.
As before, let $F_N$ be a global weak duality solution of the Liouville equation~\eqref{eq:Liouville}--\eqref{eq:Liouville-init}, let $k\ge1$, $T>0$, and $\psi\in C^\infty_c(\D)$ be fixed, and consider a bounded weak solution $\Phi_N\in L^\infty([0,T]\times\D^N)$ of the backward Liouville equation~\mbox{\eqref{eq:PhiN}--\eqref{eq:PhiN-init}} that is in duality with $F_N$.

\subsection{Explicit hierarchy for correlations}
We first need to derive fully explicit equations for dual correlations $\{C_{N,n}\}_{0\le n\le N}$ without focusing on their weak formulation on $H_n$: in other words, we need an explicit expression for the remainder term $R_{N,n}\in H_n^\bot$ in Lemma~\ref{propCn}. This requires some tedious combinatorial computations.
Note that from equation~\eqref{eq:eqn-correl} below it is straightforward to recover the result of Lemma~\ref{propbarCn} after rescaling and passing to the limit.

\begin{lemma}\label{lem:BBGKY}
Assume that the mean-field solution $f$ satisfies $K\ast f\in L^\infty([0,T]\times\Omega)$ and $\nabla_vf\in L^1([0,T]\times\D)$. Then, for all $0\le n\le N$, we have in the distributional sense on~$[0,T]\times\D^n$,
\begin{multline}\label{eq:eqn-correl}
\partial_tC_{N,n}
+\sum_{i=1}^nv_i\cdot\nabla_{x_i}C_{N,n}+\alpha\sum_{i=1}^n\Delta_{v_i}C_{N,n}\\
\,=\,\frac1{N-1}S_N^{n,+}C_{N,n-1}+S_N^{n,\circ}C_{N,n}+S_N^{n,-}C_{N,n+1}+NS_N^{n,=}C_{N,n+2},
\end{multline}
where we have set $C_{N,-1},C_{N,N+1},C_{N,N+2}\equiv0$ for notational convenience,
and where we have defined the following operators,
{\small\begin{align*}
&S_N^{n,+}C_{N,n-1}\,:=\,
\sum_{i\ne j}^n (K\ast f)(x_i)\cdot\nabla_{v_i}C_{N,n-1}(z_{[n]\setminus\{j\}})\\[-1mm]
&\qquad-~\sum_{i\ne j}^n\int_{\D} V_f(z_*,z_j)\,C_{N,n-1}(z_{[n]\setminus\{i,j\}},z_*)\,f(z_*)\,dz_*\\[-1mm]
&\qquad-~\sum_{i\ne j}^nK(x_i-x_j)\cdot\nabla_{v_i}C_{N,n-1}(z_{[n]\setminus\{j\}}),\\
&S_N^{n,\circ}C_{N,n}\,:=\,
-~\frac{N-n}{N-1}\sum_{i=1}^n (K\ast f)(x_i)\cdot\nabla_{v_i}C_{N,n}\\[-1mm]
&\qquad+~\frac{N-n}{N-1}\sum_{j=1}^n\int_{\D} V_f(z_*,z_j)\,C_{N,n}(z_{[n]\setminus\{j\}},z_*)\,f(z_*)\,dz_*\\
&\qquad-~\frac1{N-1}\sum_{i\ne j}^nK(x_i-x_j)\cdot\nabla_{v_i}C_{N,n}(z_{[n]})\\[-1mm]
&\qquad+~\frac{1}{N-1}\sum_{i\ne j}^n\int_{\D} K(x_i-x_*)\cdot\nabla_{v_i}C_{N,n}(z_{[n]\setminus\{j\}},z_*)\,f(z_*)\,dz_*\\[-1mm]
&\qquad-~\frac{1}{N-1}\sum_{i\ne j}^n\int_{\D} V_f(z_*,z_j)\,C_{N,n}(z_{[n]\setminus\{i\}},z_*)\,f(z_*)\,dz_*\\[-1mm]
&\qquad+~\frac{1}{N-1}\sum_{i\ne j}^n\int_{\D^2} V_f(z_*,z_*')\,C_{N,n}(z_{[n]\setminus\{i,j\}},z_*,z_*')\,f(z_*)f(z_*')\,dz_*dz_*',\\
&S_N^{n,-}C_{N,n+1}\,:=\,
\frac{N-n}{N-1}\sum_{j=1}^n\int_{\D} V_f(z_*,z_j)\,C_{N,n+1}(z_{[n]},z_*)\,f(z_*)\,dz_*\\[-1mm]
&\qquad-~\frac{N-n}{N-1}\sum_{i=1}^n\int_{\D} K(x_i-x_*)\cdot\nabla_{v_i}C_{N,n+1}(z_{[n]},z_*)\,f(z_*)\,dz_*\\
&\qquad-~2\frac{N-n}{N-1}\sum_{i=1}^n\int_{\D^2} V_f(z_*,z_*')\,C_{N,n+1}(z_{[n]\setminus\{i\}},z_*,z_*')\,f(z_*)f(z_*')\,dz_*dz_*',\\
&S_N^{n,=}C_{N,n+2}\,:=\,\frac{(N-n)(N-n-1)}{N(N-1)}\\[-1mm]
&\hspace{2cm}\times\int_{\D^2} V_f(z_*,z_*')\,C_{N,n+2}(z_{[n]},z_*,z_*')\,f(z_*)f(z_*')\,dz_*dz_*'.
\end{align*}
}Moreover, the final condition~\eqref{eq:PhiN-init} takes the form
\begin{multline}\label{eq:final-cond-cum}
C_{N,n}(z_1,\ldots,z_n)|_{t=T}\\
\,=\,1_{n\le k}\binom{N}n^{-1}\binom{k}{n}\sum_{l=0}^{n}(-1)^{n+l}\sum_{\tau\in P^n_l}\Big(\int_\D \psi f\Big)^{k-l}\psi^{\otimes l}(z_\tau).
\end{multline}
\end{lemma}

\begin{proof}
Starting point is the BBGKY-type hierarchy~\eqref{eq:BBGKY-MNn0} of equations satisfied by dual marginals.
In order to derive corresponding equations for correlations, we appeal to~\eqref{eq:def-CNn-M} in the following form, for $0\le n\le N$,
\[\partial_tC_{N,n}\,=\,\sum_{k=0}^n(-1)^{n-k}\sum_{\sigma\in P^n_k}\partial_tM_{N,k}(z_\sigma).\]
Inserting the equations~\eqref{eq:BBGKY-MNn0} for marginals, we are led to
\begin{equation*}
\partial_tC_{N,n}+\sum_{i=1}^nv_i\cdot\nabla_{x_i}C_{N,n}+\alpha\sum_{i=1}^n\Delta_{v_i}C_{N,n}
\,=\,T_1+T_2+T_3+T_4,
\end{equation*}
where we have set
\begin{eqnarray*}
T_i\,:=\,\sum_{k=0}^n(-1)^{n-k}\sum_{\sigma\in P_k^n}T_{i,k}'(z_\sigma),
\end{eqnarray*}
in terms of
{\small\begin{eqnarray*}
T_{1,k}'(z_\sigma)&:=&-\frac1{N-1}\sum_{i,j\in \sigma\atop i\ne j}K(x_i-x_j)\cdot\nabla_{v_i}M_{N,k}(z_\sigma),\\
T_{2,k}'(z_\sigma)&:=&\frac{N-k}{N-1}\sum_{j\in\sigma}\int_{\D}V_f(z_*,z_j)M_{N,k+1}(z_{\sigma},z_*)\,f(z_*)\,dz_*,\\
T_{3,k}'(z_\sigma)&:=&-\frac{N-k}{N-1}\sum_{i\in\sigma}\int_{\D}K(x_i-x_*)\cdot\nabla_{v_i}M_{N,k+1}(z_\sigma,z_*)\,f(z_*)\,dz_*,\\
T_{4,k}'(z_\sigma)&:=&\frac{(N-k)(N-k-1)}{N-1}\\[-1mm]
&&\times\int_{\D^2}V_f(z_*,z_*')\,M_{N,k+2}(z_\sigma,z_*,z_*')\,f(z_*)f(z_*')\,dz_*dz_*'.
\end{eqnarray*}
}We consider the different contributions $\{T_i\}_{1\le i\le 4}$ separately and use the cluster expansion~\eqref{eq:clusterexpPhin} to re-express each of them in terms of correlations instead of marginals.
First, for $T_1$, reorganizing the sums, we find
{\small\begin{eqnarray*}
T_1&=&-\frac1{N-1}\sum_{k=0}^n(-1)^{n-k}\sum_{\sigma\in P_k^n}\sum_{i,j\in \sigma\atop i\ne j}K(x_i-x_j)\cdot\nabla_{v_i}M_{N,k}(z_\sigma)\\
&=&-\frac1{N-1}\sum_{k=0}^n(-1)^{n-k}\sum_{\sigma\in P_k^n}\sum_{i,j\in \sigma\atop i\ne j}K(x_i-x_j)\cdot\nabla_{v_i}\sum_{l=0}^k\sum_{\tau\in P^\sigma_l}C_{N,l}(z_\tau)\\
&=&-\frac1{N-1}\sum_{i\ne j}^nK(x_i-x_j)\cdot\nabla_{v_i}\sum_{l=0}^n\sum_{\tau\in P^n_l}C_{N,l}(z_\tau)\\[-2mm]
&&\hspace{3cm}\times\sum_{k=l}^n(-1)^{n-k}\sharp\big\{\sigma\in P_k^n:\sigma\supset\tau\cup\{i,j\}\big\},
\end{eqnarray*}
}and thus, distinguishing the cases whether $j\in\tau$ or $j\notin\tau$, noting that the contribution vanishes if $i\notin\tau$, and computing the cardinalities,
{\small\begin{multline*}
T_1\,=\,-\frac1{N-1}\sum_{i\ne j}^nK(x_i-x_j)\cdot\nabla_{v_i}\\[-1mm]
\times\bigg(\sum_{l=0}^n\sum_{\tau\in P^n_l\atop\tau\ni j}C_{N,l}(z_\tau)
\sum_{k=l}^n(-1)^{n-k}\binom{n-l}{k-l}\\[-1mm]
+\sum_{l=0}^n\sum_{\tau\in P^n_l\atop\tau\not\ni j}C_{N,l}(z_\tau)
\sum_{k=l}^n(-1)^{n-k}\binom{n-l-1}{k-l-1}\bigg).
\end{multline*}
}Now using the binomial identity
\begin{equation}\label{eq:combin-id}
\sum_{k=0}^p(-1)^{p-k}\binom pk\,=\,1_{p=0},
\end{equation}
we deduce
\begin{multline*}
T_1\,=\,-\frac1{N-1}\sum_{i\ne j}^nK(x_i-x_j)\cdot\nabla_{v_i}
\Big(C_{N,n}(z_{[n]})
+C_{N,n-1}(z_{[n]\setminus\{j\}})\Big).
\end{multline*}
Next, for the second term $T_2$, using that $\int_{\D}V_f(z_*,z)\,f(z_*)\,dz_*=0$, we can similarly write
{\small\begin{eqnarray*}
T_2&=&\sum_{k=0}^{n}(-1)^{n-k}\sum_{\sigma\in P_k^n}\frac{N-k}{N-1}\sum_{j\in\sigma}\int_\D V_f(z_*,z_j)M_{N,k+1}(z_\sigma,z_*)\,f(z_*)\,dz_*\\
&=&\sum_{k=0}^{n}(-1)^{n-k}\sum_{\sigma\in P_k^n}\frac{N-k}{N-1}\sum_{j\in\sigma}\int_\D V_f(z_*,z_j)\sum_{l=0}^{k}\sum_{\tau\in P^\sigma_l}C_{N,l+1}(z_\tau,z_*)\,f(z_*)\,dz_*\\
&=&\sum_{j=1}^n\sum_{l=0}^{n}\sum_{\tau\in P^n_l}\int_\D V_f(z_*,z_j)C_{N,l+1}(z_\tau,z_*)\,f(z_*)\,dz_*\\
&&\hspace{3cm}\times\sum_{k=l}^{n}(-1)^{n-k}\frac{N-k}{N-1}\sharp\big\{\sigma\in P_k^n:\sigma\supset\tau\cup\{j\}\big\},
\end{eqnarray*}
}and thus, distinguishing the cases whether $j\in\tau$ or $j\notin\tau$, computing the cardinalities, and using again the binomial identity~\eqref{eq:combin-id}, now in form of
\[\sum_{k=l}^{n}(-1)^{n-k}\frac{N-k}{N-1}\binom{n-l}{k-l}\,=\,1_{l=n}\frac{N-n}{N-1}-1_{l=n-1}\frac{1}{N-1},\]
we obtain
{\small\begin{eqnarray*}
T_2
&=&\frac{N-n}{N-1}\sum_{j=1}^n\int_\D V_f(z_*,z_j)C_{N,n+1}(z_{[n]},z_*)\,f(z_*)\,dz_*\\
&-&\frac{1}{N-1}\sum_{i\ne j}^n\int_\D V_f(z_*,z_j)C_{N,n}(z_{[n]\setminus\{i\}},z_*)\,f(z_*)\,dz_*\\
&+&\frac{N-n}{N-1}\sum_{j=1}^n\int_\D V_f(z_*,z_j)C_{N,n}(z_{[n]\setminus\{j\}},z_*)\,f(z_*)\,dz_*\\
&-&\frac{1}{N-1}\sum_{i\ne j}^n\int_\D V_f(z_*,z_j)C_{N,n-1}(z_{[n]\setminus\{i,j\}},z_*)\,f(z_*)\,dz_*.
\end{eqnarray*}
}Similar computations for $T_3$ and $T_4$ easily yield
{\small\begin{eqnarray*}
T_3&=&-\frac{N-n}{N-1}\sum_{i=1}^n\int_\D K(x_i-x_*)\cdot\nabla_{v_i}C_{N,n+1}(z_{[n]},z_*)\,f(z_*)\,dz_*\\
&&+\frac{1}{N-1}\sum_{i\ne j}^n\int_\D K(x_i-x_*)\cdot\nabla_{v_i}C_{N,n}(z_{[n]\setminus\{j\}},z_*)\,f(z_*)\,dz_*\\
&&-\frac{N-n}{N-1}\sum_{i=1}^n K\ast f(x_i)\cdot\nabla_{v_i}C_{N,n}(z_{[n]})\\
&&+\frac{1}{N-1}\sum_{i\ne j}^n K\ast f(x_i)\cdot\nabla_{v_i}C_{N,n-1}(z_{[n]\setminus\{j\}}),
\end{eqnarray*}
}and
{\small\begin{eqnarray*}
T_4&=&\frac{1}{N-1}\sum_{i\ne j}^n\int_{\D^2} V_f(z_*,z_*')\,C_{N,n}(z_{[n]\setminus\{i,j\}},z_*,z_*')\,f(z_*)f(z_*')\,dz_*dz_*'\\
&-&2\frac{N-n}{N-1}\sum_{i=1}^n\int_{\D^2} V_f(z_*,z_*')\,C_{N,n+1}(z_{[n]\setminus\{i\}},z_*,z_*')\,f(z_*)f(z_*')\,dz_*dz_*'\\
&+&\frac{(N-n)(N-n-1)}{N-1}\\
&&\hspace{1.8cm}\times\int_{\D^2} V_f(z_*,z_*')\,C_{N,n+2}(z_{[n]},z_*,z_*')\,f(z_*)f(z_*')\,dz_*dz_*'.
\end{eqnarray*}
}Combining these different computations, we are precisely led to the claimed equations~\eqref{eq:eqn-correl} for correlation functions.

It remains to derive the final condition~\eqref{eq:final-cond-cum}. We recall that from~\eqref{eq:PhiN-init} we find~\eqref{eq:MNn-final} as a final condition for marginals.
Inserting this into the definition~\eqref{eq:def-CNn-M} of correlation functions, and using the combinatorial identity\footnote{This is an easy consequence of the Vandermonde identity up to upper negation.}
\[\sum_{r=l}^n(-1)^{n-r}\binom{N-r}{k-l}\binom{n-l}{r-l}\,=\,(-1)^{n+l}\binom{N-n}{k-n},\]
the claim~\eqref{eq:final-cond-cum} follows after straightforward computations.
\end{proof}

\subsection{A priori estimates on correlations}
Our next ingredient is the following mixed $L^\infty$-$ L^2_f$ estimates for dual correlations, which provide a useful refinement to the $L^2_f$ estimates obtained in Lemma~\ref{lem:apriori-sym}.

\begin{lemma}\label{lem:estim-L2Linfty}
For all $0\le k\le n\le N$, we have
\begin{equation*}
\sup_{[0,T]}\,\|C_{N,n}\|_{L^\infty(\D^k;L^2_f(\D^{n-k}))}\,\le\,2^k\binom{N-k}{n-k}^{-\frac12}\|\psi\|_{L^\infty(\D)}^k.
\end{equation*}
In terms of rescaled correlations
\begin{equation}\label{eq:def-barC}
\bar C_{N,n}\,:=\,\binom{N}{n}^\frac12C_{N,n},\qquad 0\le n\le N,
\end{equation}
this means in particular
\begin{eqnarray*}
\sup_{[0,T]}\,\|\bar C_{N,n}\|_{L^\infty(\D;L^2_f(\D^{n-1}))}&\le&2\Big(\frac{N}{n}\Big)^\frac12\|\psi\|_{L^\infty(\D)}^k,\\
\sup_{[0,T]}\,\|\bar C_{N,n}\|_{L^\infty(\D^2;L^2_f(\D^{n-2}))}&\le&8\Big(\frac{N}{n}\Big)\|\psi\|_{L^\infty(\D)}^k.
\end{eqnarray*}
\end{lemma}

\begin{proof}
For $0\le n\le N$, consider the (time-dependent) operator $\mathcal C_n:=(1-\Pi_1)\ldots(1-\Pi_n)$ on $L^2_f(\D^n)$, with $\{\Pi_j\}_j$ defined in~\eqref{eq:defin-Pij}. Recall that the definition~\eqref{eq:defin-CNn} of $C_{N,n}$ then reads $C_{N,n}=\mathcal C_nM_{N,n}$.
For any test function $g_n\in L^2_f(\D^n)$ that is symmetric in its $n$ variables, we may then write
\begin{eqnarray*}
\int_{\D^n}g_nC_{N,n}f^{\otimes n}
&=&\int_{\D^n}(\mathcal C_ng_n) M_{N,n}f^{\otimes n}\\
&=&\int_{\D^N}(\mathcal C_ng_n)(z_{[n]})\,\Phi_{N}(z_{[N]})\,f^{\otimes N}(z_{[N]})\,dz_{[N]},
\end{eqnarray*}
Equivalently, given $1\le k\le n$, we can write by symmetry
\begin{multline*}
\int_{\D^n}g_nC_{N,n}f^{\otimes n}
\,=\,\binom{N-k}{n-k}^{-1}\sum_{\sigma\in P^{\{k+1,\ldots,N\}}_{n-k}}\\[-2mm]
\times\int_{\D^N}(\mathcal C_ng_n)(z_{[k]},z_\sigma)\,\Phi_{N}(z_{[N]})\,f^{\otimes N}(z_{[N]})\,dz_{[N]},
\end{multline*}
where $P_j^{\{k+1,\ldots,N\}}$ stands for the set of all subsets of $\{k+1,\ldots,N\}$ with~$j$ elements.
Hence, by the Cauchy--Schwarz inequality,
\begin{multline*}
\Big|\int_{\D^n}g_nC_{N,n}f^{\otimes n}\Big|
\,\le\,\|\Phi_N\|_{L^\infty(\D^N)}\\[-1mm]
\times\bigg\|\binom{N-k}{n-k}^{-1}\sum_{\sigma\in P^{\{k+1,\ldots,N\}}_{n-k}}(\mathcal C_ng_n)(z_{[k]},z_\sigma)\bigg\|_{L^1_f(\D^k;L^2_f(\D^{N-k}))},
\end{multline*}
and thus, by the orthogonality properties of $\mathcal C_ng_n$,
\begin{equation*}
\Big|\int_{\D^n} g_nC_{N,n}\,f^{\otimes n}\Big|
\,\le\,\binom{N-k}{n-k}^{-\frac12}\|\Phi_N\|_{L^\infty(\D^N)}\|\mathcal C_ng_n\|_{L^1_f(\D^k;L^2_f(\D^{n-k}))}.
\end{equation*}
By definition of $\mathcal C_n$, this entails
\begin{equation*}
\Big|\int_{\D^n} g_nC_{N,n}\,f^{\otimes n}\Big|
\,\le\,2^k\binom{N-k}{n-k}^{-\frac12}\|\Phi_N\|_{L^\infty(\D^N)}\|g_n\|_{L^1_f(\D^k;L^2_f(\D^{n-k}))},
\end{equation*}
and the conclusion follows by the arbitrariness of $g_n$.
\end{proof}

\subsection{Truncated rescaled hierarchy}
In terms of rescaled correlations~\eqref{eq:def-barC}, using the above a priori estimates, we can now reorganize and truncate as follows the hierarchy derived in Lemma~\ref{lem:BBGKY}.

\begin{lemma}\label{lem:decomp-hier}
Let $K\in L^p_{loc}(\Omega;\R^d)$ for some $p\ge2$ and assume that the mean-field solution $f$ satisfies $K\ast f\in L^\infty([0,T]\times\Omega)$ and $\nabla_v\log f\in L^1(0,T;L^2_f(\D))$.
Then, for all $0\le n\le N$, we have in the distributional sense on~$[0,T]\times\D^n$,
{\small\begin{multline}\label{eq:Cn}
\partial_t\bar C_{N,n}-L_n\bar C_{N,n}\,=\,R_{N,n}\\[-1mm]
+\sum_{j=1}^n\int_\D V_f(z_*,z_j)\,\bar C_{N,n}(z_{[n]\setminus \{j\}},z_*)\,f(z_*)\,dz_*\\[-2mm]
+\sqrt{n+1}\,\sqrt{n+2}\\[-2mm]
\times\int_{\D^2}V_f(z_{n+1},z_{n+2})\,\bar C_{N,n+2}(z_{[n+2]})\,f(z_{n+1})f(z_{n+2})\,dz_{n+1}dz_{n+2},
\end{multline}
}where we have defined the linear transport-diffusion operator
\begin{equation}\label{eq:def-Ln}
L_ng\,:=\,-\sum_{i=1}^n\Big(v_i\cdot\nabla_{x_i}+(K\ast f)(x_i)\cdot\nabla_{v_i}+\alpha\Delta_{v_i}\Big)g,
\end{equation}
and where the remainder term $R_{N,n}$ satisfies
\begin{equation}\label{eq:RNn-small}
\|R_{N,n}\|_{H^{-1}_f(\D^n)}\,\lesssim\,\Lambda\Big(\frac{n+1}{N}\Big)^{\frac12-\frac1p},
\end{equation}
up to some multplicative constant independent of $N,n,t$,
for some factor $\Lambda$ independent of $N,n$ satisfying $\int_0^T\Lambda\le1$,
and where the $H^{-1}_f(\D^n)$-norm is defined as
{\small\begin{multline}\label{eq:norm-H-1}
\|u\|_{H^{-1}_f(\D^n)}\\
\,:=\,\inf\bigg\{\tfrac1{n+1}\|h\|_{L^2_f(\D^n)}+\max_{1\le j\le n}\|w_j\|_{L^2_f(\D^n)}+\max_{1\le j\le n}\|z_j\|_{L^2_f(\D^n)}:\\
\hspace{3cm}h,w_1,z_1,\ldots,w_n,z_n\in L^2_f(\D^n),\\[-1mm]
~h+\sum_{j=1}^n\operatorname{div}_{x_j}(w_j)+\sum_{j=1}^n\operatorname{div}_{v_j}(z_j)=u\bigg\}.
\end{multline}
}\end{lemma}

\begin{proof}
We split the proof into two steps.
In the sequel, we use the short-hand notation $\Lambda$ for factors independent of $N,n$ satisfying $\int_0^T\Lambda\le1$, the value of which may change from line to line.

\medskip\noindent
{\bf Step~1:} proof of~\eqref{eq:Cn} with remainder term $R_{N,n}$ satisfying
\begin{multline}\label{eq:pr-est-RNn}
\|R_{N,n}\|_{H^{-1}_f(\D^n)}\,\lesssim\,\Lambda\Big(\frac{n+1}N\Big)^\frac12
+\frac{n}{N}\|K(x_1-x_2)\bar C_{N,n}\|_{L^2_f(\D^n)}\\
+\frac{n}{N}\Big\|\int_\D V_f(z_*,z_1)\,\bar C_{N,n}(z_{[n-1]},z_*)\,f(z_*)\,dz_*\Big\|_{L^2_f(\D^{n-1})}\\
+\Big(\frac{n}{N}\Big)^\frac12\Big\|\int_\D V_f(z_*,z_1)\,\bar C_{N,n+1}(z_{[n]},z_*)\,f(z_*)\,dz_*\Big\|_{L^2_f(\D^n)},
\end{multline}
up to a multiplicative constant independent of $n,N,t$, and with the above-defined notation for $\Lambda$.

\smallskip\noindent
From Lemma~\ref{lem:BBGKY}, we deduce for rescaled correlations,
{\small\begin{multline*}
\partial_t\bar C_{N,n}+\sum_{i=1}^n\Big(v_i\cdot\nabla_{x_i}+\alpha\Delta_{v_i}\Big)\bar C_{N,n}\\
\,=\,\frac1{N-1}\Big(\frac{N-n+1}{n}\Big)^{\frac12}S_N^{n,+}\bar C_{N,n-1}
+S_N^{n,\circ}\bar C_{N,n}\\
+\Big(\frac{n+1}{N-n}\Big)^{\frac12}S_N^{n,-}\bar C_{N,n+1}
+N\Big(\frac{(n+2)(n+1)}{(N-n)(N-n-1)}\Big)^{\frac12}S_N^{n,=}\bar C_{N,n+2}.
\end{multline*}
}Inserting the definition of $S_N^{n,\circ}$ and $S_N^{n,=}$, this can be written in the desired form of equation~\eqref{eq:Cn} with remainder term $R_{N,n}$ given by
{\small\begin{eqnarray}
\qquad R_{N,n}&:=&\frac1{N-1}\Big(\frac{N-n+1}{n}\Big)^{\frac12}S_N^{n,+}\bar C_{N,n-1}\label{eq:def-remainderR}\\
&+&\tilde S_N^{n,\circ}\bar C_{N,n}
\,+\,\Big(\frac{n+1}{N-n}\Big)^{\frac12}S_N^{n,-}\bar C_{N,n+1}\nonumber\\
&+&\sqrt{n+1}\,\sqrt{n+2}\bigg(\Big(\frac{(N-n)(N-n-1)}{(N-1)^2}\Big)^{\frac12}-1\bigg)\nonumber\\
&&\times\int_{\D^2} V_f(z_*,z_*')\,\bar C_{N,n+2}(z_{[n]},z_*,z_*')\,f(z_*)f(z_*')\,dz_*dz_*',\nonumber
\end{eqnarray}
}where $\tilde S_N^{n,\circ}$ is obtained from $S_N^{n,\circ}$ by subtracting leading-order terms,
{\small\begin{align*}
&\tilde S_N^{n,\circ}\bar C_{N,n}\,:=\,
\frac{n-1}{N-1}\sum_{i=1}^n (K\ast f)(x_i)\cdot\nabla_{v_i}\bar C_{N,n}\\
&\qquad-~\frac{n-1}{N-1}\sum_{j=1}^n\int_{\D} V_f(z_*,z_j)\,\bar C_{N,n}(z_{[n]\setminus\{j\}},z_*)\,f(z_*)\,dz_*\\
&\qquad-~\frac1{N-1}\sum_{i\ne j}^nK(x_i-x_j)\cdot\nabla_{v_i}\bar C_{N,n}(z_{[n]})\\
&\qquad+~\frac{1}{N-1}\sum_{i\ne j}^n\int_{\D} K(x_i-x_*)\cdot\nabla_{v_i}\bar C_{N,n}(z_{[n]\setminus\{j\}},z_*)\,f(z_*)\,dz_*\\
&\qquad-~\frac{1}{N-1}\sum_{i\ne j}^n\int_{\D} V_f(z_*,z_j)\,\bar C_{N,n}(z_{[n]\setminus\{i\}},z_*)\,f(z_*)\,dz_*\\
&\qquad+~\frac{1}{N-1}\sum_{i\ne j}^n\int_{\D^2} V_f(z_*,z_*')\,\bar C_{N,n}(z_{[n]\setminus\{i,j\}},z_*,z_*')\,f(z_*)f(z_*')\,dz_*dz_*'.
\end{align*}
}We analyze separately each of the terms defining $R_{N,n}$ in~\eqref{eq:def-remainderR}.
We start with the first one and prove that
\[\frac1{N-1}\Big(\frac{N-n+1}{n}\Big)^{\frac12}\|S_N^{n,+}\bar C_{N,n-1}\|_{H^{-1}_f(\D^n)}\,\lesssim\,\Lambda\Big(\frac{n}{N}\Big)^\frac12.\]
By definition of $S_N^{n,+}$ and of the $H^{-1}_f(\D^n)$-norm, it suffices to prove
{\small\begin{align*}
&\max_{1\le i\le n}\sum_{1\le j\le n\atop j\ne i}\|(K\ast f)(x_i)\bar C_{N,n-1}(z_{[n]\setminus\{j\}})\|_{L^2_f(\D^n)}\\
+~&\frac1{n+1}\sum_{i\ne j}^n\bigg\|\int_\D V_f(z_*,z_j)\,\bar C_{N,n-1}(z_{[n]\setminus\{i,j\}},z_*)\,f(z_*)\,dz_*\bigg\|_{L^2_f(\D^n)}\\
+~&\max_{1\le i\le n}\sum_{1\le j\le n\atop j\ne i}\|K(x_i-x_j)\bar C_{N,n-1}(z_{[n]\setminus\{j\}})\|_{L^2_f(\D^n)}~~\lesssim~~\Lambda n,
\end{align*}
}which follow from direct estimates and the Cauchy--Schwarz inequality using assumptions on $K$ and $f$, and recalling the a priori $L^2_f$ estimates of Lemma~\ref{lem:apriori-sym} for $\bar C_{N,n-1}$.
We turn to the next remainder terms in~\eqref{eq:def-remainderR}.
For $\tilde S_N^{n,\circ}\bar C_{N,n}$, we find that most of the contributions can be estimated similarly: letting aside the contributions that cannot, we get
{\small\begin{multline*}
\|\tilde S_N^{n,\circ}\bar C_{N,n}\|_{H^{-1}_f(\D^n)}\,\lesssim\,\Lambda\frac{n}{N}
+\frac1{N-1}\Big\|\sum_{i\ne j}^nK(x_i-x_j)\cdot\nabla_{v_i}\bar C_{N,n}\Big\|_{H^{-1}_f(\D^n)}\\
+\frac1{N-1}\Big\|\sum_{i\ne j}^n\int_\D V_f(z_*,z_j)\,\bar C_{N,n}(z_{[n]\setminus\{i\}},z_*)\,f(z_*)\,dz_*\Big\|_{H^{-1}_f(\D^n)},
\end{multline*}
}and thus, by definition of the $H^{-1}_f$-norm and by symmetry of $\bar C_{N,n}$,
{\small\begin{multline*}
\|\tilde S_N^{n,\circ}\bar C_{N,n}\|_{H^{-1}_f(\D^n)}\,\lesssim\,\Lambda\frac{n}{N}+\frac{n}{N}\|K(x_1-x_2)\bar C_{N,n}\|_{L^2_f(\D^n)}\\
+\frac{n}{N}\Big\|\int_\D V_f(z_*,z_1)\,\bar C_{N,n}(z_{[n-1]},z_*)\,f(z_*)\,dz_*\Big\|_{L^2_f(\D^{n-1})}.
\end{multline*}
}Similarly, we get
\begin{multline*}
\Big(\frac{n+1}{N-n}\Big)^\frac12\|S_N^{n,-}\bar C_{N,n+1}\|_{H^{-1}_f(\D^n)}
\,\lesssim\,
\Lambda\Big(\frac{n}{N}\Big)^\frac12\\
+\Big(\frac{n}{N}\Big)^\frac12\Big\|\int_\D V_f(z_*,z_1)\,\bar C_{N,n+1}(z_{[n]},z_*)\,f(z_*)\,dz_*\Big\|_{L^2_f(\D^n)}.
\end{multline*}
Finally, noting that
\begin{equation}\label{eq:estim-prefactor-S=}
\bigg|\Big(\frac{(N-n)(N-n-1)}{(N-1)^2}\Big)^{\frac12}-1\bigg|\,\lesssim\,\frac{n}{N},
\end{equation}
we find
{\small\begin{multline*}
\sqrt{n+1}\,\sqrt{n+2}\bigg|\Big(\frac{(N-n)(N-n-1)}{(N-1)^2}\Big)^{\frac12}-1\bigg|\\
\times\bigg\|\int_{\D^2} V_f(z_*,z_*')\,\bar C_{N,n+2}(z_{[n]},z_*,z_*')\,f(z_*)f(z_*')\,dz_*dz_*'\bigg\|_{H^{-1}_f(\D^n)}
\,\lesssim\,\Lambda\frac{n+1}N.
\end{multline*}
}Combining the above estimates yields the claim~\eqref{eq:pr-est-RNn}.

\medskip\noindent
{\bf Step~2:} Proof of~\eqref{eq:RNn-small}.\\
Recall the assumption $K\in L^p_{loc}(\Omega;\R^d)$ for some~$p>2$.
By H\"older's inequality, we then find
\begin{eqnarray*}
\|K(x_1-x_2)\bar C_{N,n}\|_{L^2_f(\D^n)}&\lesssim&\|\bar C_{N,n}\|_{L^{\frac{2p}{p-2}}_f(\D^2;L^2(D^{n-2}))}\\
&\le&\|\bar C_{N,n}\|_{L^\infty(\D^2;L^2(D^{n-2}))}^{\frac{2}{p}},
\end{eqnarray*}
and thus, by Lemma~\ref{lem:estim-L2Linfty},
{\small\begin{eqnarray*}
\|K(x_1-x_2)\bar C_{N,n}\|_{L^2_f(\D^n)}\,\lesssim\,\Big(\frac{N}n\Big)^{\frac2p}.
\end{eqnarray*}
}Similarly, we find
{\small\begin{eqnarray*}
\lefteqn{\Big\|\int_\D V_f(z_*,z_1)\,\bar C_{N,n+1}(z_{[n]},z_*)\,f(z_*)\,dz_*\Big\|_{L^2_f(\D^n)}}\\
&\le&\|\bar C_{N,n+1}\|_{L^{\frac{2p}{p-2}}_f(\D;L^2_f(\D^{n}))}\bigg(\int_\D\Big(\int_\D|V_f(z_*,z)|^2f(z_*)\,dz_*\Big)^\frac p2f(z)\,dz\bigg)^\frac1p\\
&\lesssim&\Big(\frac Nn\Big)^{\frac1p}\bigg(\int_\D\Big(\int_\D|V_f(z_*,z)|^pf(z)\,dz\Big)^\frac 2pf(z_*)\,dz_*\bigg)^\frac12,
\end{eqnarray*}
}and thus, using $K\in L^p_{loc}(\Omega;\R^d)$ and the assumptions on $f$,
{\small\begin{equation*}
\Big\|\int_\D V_f(z_*,z_1)\,\bar C_{N,n+1}(z_{[n]},z_*)\,f(z_*)\,dz_*\Big\|_{L^2_f(\D^n)}
\,\lesssim\,\Lambda\Big(\frac Nn\Big)^{\frac1p}.
\end{equation*}
}Inserting these estimates into the result~\eqref{eq:pr-est-RNn} of Step~1, the conclusion~\eqref{eq:RNn-small} follows.
\end{proof}

\subsection{Proof of Theorem~\ref{theor:quant}}\label{sec:proof-quant}
As we assume $K\in H^s_{loc}(\Omega;\R^d)$ for some $s>0$, we note that the Sobolev embedding yields $K\in L^p_{loc}(\Omega;\R^d)$ for $p:=\frac{2d}{d-2s}>2$.
Starting point is the truncated hierarchy~\eqref{eq:Cn} for rescaled correlations, cf.~Lemma~\ref{lem:decomp-hier}.
Given some parameter $\eps_N\in(0,1]$ to be chosen later on, for all $n\ge0$, we consider the following modified norm on $H^{-1}_f(\D^n)$,
\[\|c_n\|_n\,=\,
\inf\Big\{\|d_n\|_{L^2_f(\D^n)}+\eps_N^{-1}\,\|e_n\|_{H^{-1}_f(\D^n)}\,:\,c_n=d_n+e_n\Big\},\]
where we recall that the $H^{-1}_f(\D^n)$-norm is defined in~\eqref{eq:norm-H-1}.
In these terms, the estimate~\eqref{eq:RNn-small} on the remainder $R_{N,n}$ in the truncated hierarchy~\eqref{eq:Cn} reads
\begin{equation}\label{eq:RNn-small-re}
\|R_{N,n}\|_n\,\lesssim\,\Lambda\e_N^{-1}\Big(\frac{n+1}{N}\Big)^{\frac12-\frac1p},
\end{equation}
up to a multiplicative constant independent of $N,n,t$, and where we henceforth use the notation $\Lambda$ for a factor independent of $N,n$ satisfying $\int_0^T\Lambda\le1$, the value of which may change from line to line.
From here, we split the proof into three steps.

\medskip\noindent
{\bf Step~1:} a priori estimates in $H^{-1}_f(\D^n)$. For all $n\ge0$, if $u_n\in L^\infty(0,T;H^{-1}_f(\D^n))$ and $g_n\in L^1(0,T;H^{-1}_f(\D^n))$ satisfy the equation
\[\partial_tu_n-L_nu_n=g_n,\]
where $L_n$ is the operator defined in~\eqref{eq:def-Ln},
then we show that they satisfy in the weak sense on $[0,T]$,
\begin{equation}\label{eq:lower-bound-norm-un}
\tfrac{d}{dt}\|u_n\|_n\,\ge\,-\|g_n\|_n-\big(1+\|\nabla K\ast f\|_{L^\infty(\Omega)}\big)\|u_n\|_n.
\end{equation}
By definition of the norms, we can decompose
\[g_n=g_n^1+g_n^2+\sum_{j=1}^n(\nabla_{v_j}\cdot g_n^{3,j}+\nabla_{x_j}\cdot g_n^{4,j}),\]
with $g_n^1,g_n^2,g_n^{3,j},g_n^{4,j}\in L^1(0,T;L^2_f(\D^n))$, such that
\begin{multline}\label{eq:decomp-gn}
\|g_n\|_n=\|g_n^1\|_{L^2_f(\D^n)}\\
+\e_N^{-1}\Big(\tfrac1{n+1}\|g_n^2\|_{L^2_f(\D^n)}+\max_{1\le j\le n}\|g_n^{3,j}\|_{L^2_f(\D^n)}+\max_{1\le j\le n}\|g_n^{4,j}\|_{L^2_f(\D^n)}\Big).
\end{multline}
Given $t_0\in(0,T]$, we can also decompose
\[u_n(t_0)=w_n^1+w_n^2+\sum_{j=1}^n(\nabla_{v_j}\cdot w_n^{3,j}+\nabla_{x_j}\cdot w_n^{4,j}),\]
with $w_n^1,w_n^2,w_n^{3,j},w_n^{4,j}\in L^2_f(\D^n)$, such that
\begin{multline}\label{eq:decomp-unt0}
\|u_n\|_n(t_0)\,=\,\|w_n^1\|_{L^2_f(\D^n)}\\
+\e_N^{-1}\Big(\tfrac1{n+1}\|w_n^2\|_{L^2_f(\D^n)}+\max_{1\le j\le n}\|w_n^{3,j}\|_{L^2_f(\D^n)}+\max_{1\le j\le n}\|w_n^{4,j}\|_{L^2_f(\D^n)}\Big).
\end{multline}
Recalling the definition~\eqref{eq:def-Ln} of~$L_n$ and computing commutators
\[[L_n,\nabla_{v_j}]=\nabla_{x_j},\qquad [L_n,\nabla_{x_j}]=(\nabla K\ast f)'(x_j)\,\nabla_{v_j},\]
we then obtain the following dynamical decomposition for $u_n$,
\begin{equation}\label{eq:decomp-un}
u_n=u_n^1+u_n^2
+\sum_{j=1}^{n}(\nabla_{v_j}\cdot u_n^{3,j}+\nabla_{x_j}\cdot u_n^{4,j}),\quad\text{for $t\in[0,t_0]$},
\end{equation}
where the different terms are defined as the solutions of the following (backward) equations on $[0,t_0]$,
\begin{eqnarray*}
(\partial_t-L_n)u_n^1&=&g_n^1,\\
(\partial_t-L_n)u_n^2&=&g_n^2,\\
(\partial_t-L_n)u_n^{3,j}&=&g_n^{3,j}+(\nabla K\ast f)(x_j)u_n^{4,j},\\
(\partial_t-L_n)u_n^{4,j}&=&g_n^{4,j}+u_n^{3,j},
\end{eqnarray*}
with final condition $(u_n^1,u_n^2,u_n^{3,j},u_n^{4,j})(t_0)=(w_n^1,w_n^2,w_n^{3,j},w_n^{4,j})$.
In these terms, we may estimate as follows the left time-derivative of the norm of~$u_n$ at~$t=t_0$,
\begin{multline*}
(\tfrac{d}{dt})^-\|u_n\|_n(t_0)\,\ge\, (\tfrac{d}{dt})^-\|u_n^1\|_{L^2_f(\D^n)}(t_0)\\
+\e_N^{-1}(\tfrac{d}{dt})^-\Big(\tfrac1{n+1}\|u_n^2\|_{L^2_f(\D^n)}+\max_{1\le j\le n}\|u_n^{3,j}\|_{L^2_f(\D^n)}+\max_{1\le j\le n}\|u_n^{4,j}\|_{L^2_f(\D^n)}\Big)(t_0).
\end{multline*}
By definition of $u_n^1$, with $L_n$ given in~\eqref{eq:def-Ln}, recalling the mean-field equation for the weight $f$, we find in the weak sense on $[0,t_0]$,
\begin{equation*}
\tfrac{d}{dt}\|u_n^1\|_{L^2_f(\D^n)}^2\,=\,
2\int_{\D^n} u_n^1g_n^1 f^{\otimes n}
+2\alpha\sum_{j=1}^n\int_{\D^n} |\nabla_{v_j}u_n^1|^2 f^{\otimes n},
\end{equation*}
and thus
\begin{equation*}
\tfrac{d}{dt}\|u_n^1\|_{L^2_f(\D^n)}\,\ge\,
-\|g_n^1\|_{L^2_f(\D^n)}.
\end{equation*}
Similarly,
\begin{eqnarray*}
\tfrac{d}{dt}\|u_n^2\|_{L^2_f(\D^n)}&\ge&-\|g_n^2\|_{L^2_f(\D^n)},\\
\tfrac{d}{dt}\|u_n^{3,j}\|_{L^2_f(\D^n)}&\ge&-\|g_n^{3,j}\|_{L^2_f(\D^n)}-\|\nabla K\ast f\|_{L^\infty(\Omega)}\|u_n^{4,j}\|_{L^2_f(\D^n)},\\
\tfrac{d}{dt}\|u_n^{4,j}\|_{L^2_f(\D^n)}&\ge&-\|g_n^{4,j}\|_{L^2_f(\D^n)}-\|u_n^{3,j}\|_{L^2_f(\D^n)}.
\end{eqnarray*}
Combining these different estimates at $t=t_0$, recalling the choice of final conditions in the dynamic decomposition~\eqref{eq:decomp-un} of $u_n$, and using~\eqref{eq:decomp-gn} and~\eqref{eq:decomp-unt0}, the claim~\eqref{eq:lower-bound-norm-un} follows at $t=t_0$.

\medskip\noindent
{\bf Step~2:} proof that for all $\delta>0$ and $0\le n\le N$ the following holds in the weak sense on~$[0,T]$,
\begin{multline}\label{eq:estim-hier-Hs}
\tfrac{d}{dt}\|\bar C_{N,n}\|_n\,\gtrsim\,
-\Lambda(n+1)\delta^s
-\Lambda\e_N^{-1}\Big(\frac{n+1}N\Big)^{\frac12-\frac1p}\\
-\Lambda(n+1)(1+\e_N\delta^{s-1})\big(\|\bar C_{N,n}\|_n+\|\bar C_{N,n+2}\|_{n+2}\big).
\end{multline}
Given $0\le n\le N$, applying the bound~\eqref{eq:lower-bound-norm-un} of Step~1 to the truncated hierarchy~\eqref{eq:Cn}, using the assumption $\nabla K\ast f\in L^\infty([0,T]\times\Omega)$, and inserting the estimate~\eqref{eq:RNn-small-re} on $R_{N,n}$, we first obtain in the weak sense on~$[0,T]$,
\begin{multline*}
\tfrac{d}{dt}\|\bar C_{N,n}\|_n\,\gtrsim\,
-\|\bar C_{N,n}\|_n
-\Lambda\e_N^{-1}\Big(\frac{n+1}N\Big)^{\frac12-\frac1p}\\
-\Big\|\sum_{j=1}^n\int_\D V_f(z_*,z_j)\,\bar C_{N,n}(z_{[n]\setminus \{j\}},z_*)\,f(z_*)\,dz_*\Big\|_n\\
-(n+1)\Big\|\int_{\D^2}V_f(z_*,z_*')\,\bar C_{N,n+2}(z_{[n]},z_*,z_*')\,f(z_*)f(z_*')\,dz_*dz_*'\Big\|_n.
\end{multline*}
It remains to estimate the last two right-hand side terms.
For that purpose, we introduce the following decomposition of $V_f$,
\begin{equation}\label{eq:decomp-Vf}
V_f(z,z')\,=\,V_f^\delta(z,z')+W_f^\delta(z,z'),
\end{equation}
in terms of
\begin{eqnarray*}
V_f^\delta(z,z')&=&\big(K_{\delta}(x-x')-K\ast f(x)\big)\cdot\nabla_v\log f(z),\\
W_f^\delta(z,z')&=&\big(K(x-x')-K_{\delta}(x-x')\big)\cdot\nabla_v\log f(z),
\end{eqnarray*}
where we have set $K_\delta:=K\ast\rho_\delta$ and $\rho_\delta=\delta^{-d}\rho(\frac\cdot\delta)$ for some $\rho\in C^\infty_c(\Omega;\R^+)$ with $\int_\Omega\rho=1$ and for some parameter $\delta>0$ to be properly chosen later on.
We start with the contribution of $W_f^\delta$. By the a priori estimates of Lemma~\ref{lem:apriori-sym}, using $K\in H^s_{loc}(\Omega)$, we find
\begin{multline*}
\Big\|\sum_{j=1}^n\int_\D W_f^\delta(z_*,z_j)\,\bar C_{N,n}(z_{[n]\setminus \{j\}},z_*)\,f(z_*)\,dz_*\Big\|_{L^2_f(\D^n)}\\
+\Big\|\int_{\D^2}W^\delta_f(z_*,z_*')\,\bar C_{N,n+2}(z_{[n]},z_*,z_*')\,f(z_*)f(z_*')\,dz_*dz_*'\Big\|_{L^2_f(\D^n)}\\
\,\lesssim\,(n+1)\|W_f^\delta\|_{L^2_f(\D^2)}
\,\lesssim\,\Lambda(n+1)\delta^s.
\end{multline*}
We turn to the contribution of $V_f^\delta$.
By definition of the norms, decomposing $\bar C_{N,n}=\bar C_{N,n}^1+\bar C_{N,n}^2$ with
\[\|\bar C_{N,n}\|_n\,=\,\|\bar C_{N,n}^1\|_{L^2_f(\D^n)}+\epsilon_N^{-1}\|\bar C_{N,n}^2\|_{H^{-1}_f(\D^n)},\]
we find on the one hand
\begin{multline*}
\Big\|\sum_{j=1}^n\int_\D V_f^\delta(z_*,z_j)\,\bar C_{N,n}^1(z_{[n]\setminus \{j\}},z_*)\,f(z_*)\,dz_*\Big\|_{L^2_f(\D^n)}\\
\,\lesssim\,n\|V_f^\delta\|_{L^2_f(\D^2)}\|\bar C_{N,n}^1\|_{L^2_f(\D^n)},
\end{multline*}
and on the other hand
\begin{multline*}
\Big\|\sum_{j=1}^n\int_\D V_f^\delta(z_*,z_j)\,\bar C_{N,n}^2(z_{[n]\setminus \{j\}},z_*)\,f(z_*)\,dz_*\Big\|_{n}\\
\,\lesssim\,n\e_N^{-1}\|V_f^\delta\|_{L^2_f(\D^2)}\|\bar C_{N,n}^2\|_{H^{-1}_f(\D^n)}\\
+n\|f(z_1)^{-1}\nabla_{z_1}(f(z_1)V_f^\delta(z_1,z_2))\|_{L^2_f(\D^2)}\|\bar C_{N,n}^{2}\|_{H^{-1}_f(\D^n)}.
\end{multline*}
Putting these two bounds together, recalling the choice of the decomposition $\bar C_{N,n}=\bar C_{N,n}^1+\bar C_{N,n}^2$, the definition of $V_f^\delta$, and the assumptions on $K$ and $f$, we get
\begin{multline*}
\Big\|\sum_{j=1}^n\int_\D V_f^\delta(z_*,z_j)\,\bar C_{N,n}(z_{[n]\setminus \{j\}},z_*)\,f(z_*)\,dz_*\Big\|_{n}\\
\,\lesssim\,
n\Big(\|V_f^\delta\|_{L^2_f(\D^2)}
+\e_N \|f(z_1)^{-1}\nabla_{z_1}(f(z_1)V_f^\delta(z_1,z_2))\|_{L^2_f(\D^2)}\Big)\|\bar C_{N,n}\|_n\\
\,\lesssim\,
\Lambda n(1+\e_N\delta^{s-1})\|\bar C_{N,n}\|_n.
\end{multline*}
Similarly, we also get
\begin{multline*}
\Big\|\int_{\D^2}V_f^\delta(z_*,z_*')\,\bar C_{N,n+2}(z_{[n]},z_*,z_*')\,f(z_*)f(z_*')\,dz_*dz_*'\Big\|_n\\
\,\lesssim\,\Lambda(1+\e_N\delta^{s-1})\|\bar C_{N,n+2}\|_{n+2}.
\end{multline*}
Combining all the pieces, the claim~\eqref{eq:estim-hier-Hs} follows.

\medskip\noindent
{\bf Step~3:}
Conclusion: proof that for all $t\ge0$,
\begin{equation}\label{eq:rate-FNk-0}
\|F_{N,k}(t)-f(t)^{\otimes k}\|_{C_c(\D)^*}\,\lesssim\,\big(N^{-\frac12}+N^{-\frac{s^2}{d}}\big)^{e^{-Ct}}.
\end{equation}
Starting from~\eqref{eq:estim-hier-Hs} and choosing
\begin{equation}\label{eq:choice-eN}
\e_N=\delta^{1-s},\qquad\delta
=N^{\frac1p-\frac12},
\end{equation}
we get in the weak sense on~$[0,T]$, for all $0\le n\le N$,
\begin{equation}\label{eq:estim-hier-Hs-re}
\tfrac{d}{dt}\|\bar C_{N,n}\|_n\,\ge\,
-C\Lambda(n+1)\Big(N^{s(\frac1p-\frac12)}+\|\bar C_{N,n}\|_n+\|\bar C_{N,n+2}\|_{n+2}\Big),
\end{equation}
for some constant $C$ independent of $N,t$.
Consider the generating function
\[Z_N(t,r)\,:=\,\sum_{n=1}^\infty r^n\Big(\|\bar C_{N,n}\|_n(t)+C(n+1)N^{s(\frac1p-\frac12)}\int_0^t\Lambda\Big),\]
where we set $\bar C_{N,n}:=0$ for $n>N$.
This generating function is well-defined for all $t\in[0,T]$ and $r\in[0,1)$ by the a priori estimates of Lemma~\ref{lem:apriori-sym}. In these terms, the above differential inequality yields for all $t\in[0,T]$ and $r\in[0,1)$,
\begin{equation*}
\partial_tZ_N(t,r)\,\ge\,
-C\Lambda(t)(r+r^{-1})\partial_rZ_N(t,r)
\,\ge\,-2C\Lambda(t) r^{-1}\partial_rZ_N(t,r).
\end{equation*}
Solving this differential inequality by the method of characteristics, we deduce
for all $0\le s\le t\le T$ and $r\in[0,1)$, provided that $r^2\ge4C\int_s^t\Lambda$,
\begin{equation*}
Z_N\Big(s,\Big(r^2-4C\int_s^t\Lambda\Big)^{\frac12}\Big)\,\le\, Z_N(t,r).
\end{equation*}
In particular, choosing $\tau_0>0$ such that $\sup_{t\in[0,T]}\int_{t-\tau_0}^t\Lambda<\frac{5}{144C}$, we get for all $t\in[0,T]$,
\begin{equation*}
\sup_{\tau\in[0, t\wedge \tau_0]}Z_N(t-\tau,\tfrac13)\,\le\, Z_N(t,\tfrac12).
\end{equation*}
Noting that the Cauchy--Schwarz inequality yields
\[Z_N(t,\tfrac12)\,\le\,Z_N(t,\tfrac13)^\frac12Z_N(t,\tfrac34)^\frac12,\]
and that the a priori estimates of Lemma~\ref{lem:apriori-sym} yield
\[Z_N(t,r)\,\lesssim\,\sum_{n=1}^\infty r^n\Big(1+(n+1)N^{s(\frac1p-\frac12)}\Big)\,\lesssim\, (1-r)^{-2},\]
we deduce for all $t\in[0,T]$,
\[\sup_{\tau\in[0, t\wedge \tau_0]} Z_N(t-\tau,\tfrac13)\,\lesssim\,Z_N(t,\tfrac13)^\frac12.\]
Iterating this estimate, and noting that the final condition~\eqref{eq:final-cond-cum} yields
\[Z_N(T,r)\,\lesssim\,N^{-\frac12}(1-r)^{-1}+N^{s(\frac1p-\frac12)}(1-r)^{-2},\]
we conclude for all $t\in[0,T]$,
\[Z_N(t,\tfrac13)\,\lesssim\, \big(N^{-\frac12}+N^{s(\frac1p-\frac12)}\big)^{e^{-C(T-t)}},\]
which entails, for all $1\le n\le N$,
\begin{equation}\label{eq:estim-CNn-fin}
\sup_{[0,T]}\|\bar C_{N,n}(t)\|_n\,\lesssim\, 3^n\big(N^{-\frac12}+N^{s(\frac1p-\frac12)}\big)^{e^{-C(T-t)}}.
\end{equation}
It remains to turn this into an estimate for the primal mean-field error.
For that purpose, we recall identities~\eqref{eq:duality-FNPhiN} and~\eqref{eq:rewr-red} and the definition of rescaled correlations, to the effect of
{\small\begin{equation*}
\int_{\D^k}\psi^{\otimes k}\big(F_{N,k}(T)-f(T)^{\otimes k}\big)
\,=\,
-\Big(\frac{2N}{N-1}\Big)^{\frac12}\int_0^T\Big(\int_{\D^2}V_f\bar C_{N,2}\,f^{\otimes 2}\,\Big).
\end{equation*}
}Decomposing $V_f$ as in~\eqref{eq:decomp-Vf} and using the assumptions on $K$ and $f$, we may then bound
\begin{multline*}
\Big|\int_{\D^k}\psi^{\otimes k}\big(F_{N,k}(T)-f(T)^{\otimes k}\big)\Big|\\
\,\lesssim\,
(1+\e_N\delta^{s-1})\sup_{[0,T]}\|\bar C_{N,2}\|_2
+\delta^s\sup_{[0,T]}\|\bar C_{N,2}\|_{L^2_f(\D^2)},
\end{multline*}
and thus, inserting~\eqref{eq:estim-CNn-fin} and the a priori estimates of Lemma~\ref{lem:apriori-sym}, and recalling the choice~\eqref{eq:choice-eN} of $\e_N$ and $\delta$, we are led to
\begin{equation*}
\Big|\int_{\D^k}\psi^{\otimes k}\big(F_{N,k}(T)-f(T)^{\otimes k}\big)\Big|
\,\lesssim\,
\big(N^{-\frac12}+N^{s(\frac1p-\frac12)}\big)^{e^{-CT}}.
\end{equation*}
By the arbitrariness of $T,\psi,k$, recalling that $\psi$ is chosen compactly supported, and noting that the multiplicative constant only depends on $\psi$ via its $L^\infty(\D)$-norm, the claim~\eqref{eq:rate-FNk-0} follows by definition of the dual norm.
\qed

\bigskip
\subsection*{Acknowledgments:}
D. Bresch was partially supported by the BOURGEONS project, grant  ANR-23-CE40-0014-01 of the French National Research Agency (ANR). This work also benefited of the support of the ANR under France 2030 bearing the reference ANR-23-EXMA-004 (Complexflows project). 
M. Duerinckx acknowledges financial support from the European Union (ERC, PASTIS, Grant Agreement\linebreak n$^\circ$101075879).\footnote{{Views and opinions expressed are however those of the authors only and do not necessarily reflect those of the European Union or the European Research Council Executive Agency. Neither the European Union nor the granting authority can be held responsible for them.}} P.-E. Jabin was partially supported by NSF DMS Grants 2205694 and 2219297.\bigskip

\appendix
\section*{Appendix: Weak duality solutions}
In this appendix, we introduce a new notion of ``weak duality solutions'' for the Liouville equation, which is the relevant notion of solution for our duality approach to mean field.
In a nutshell, weak duality solutions are defined to be in duality with \emph{some}, but possibly not all, bounded weak solutions of the backward equation with given final condition. We shall see below that weak duality solutions always exist whenever $K\in L^1_{loc}(\Omega;\R^d)$ and $F_N^\circ\in L^1\cap L^p(\D^N)$ for some $p>1$, cf.~Proposition~\ref{prop:weak-dual-sol}.
We emphasize that weak duality solutions do not need to be actual weak solutions of the Liouville equation, as in particular the product $K(x_i-x_j)F_N$ might not be defined.
In addition, bounded renormalized solutions for the backward equation are not required to exist either, so that in particular no uniqueness is guaranteed in general.
Of course, weak duality solutions coincide with renormalized solutions whenever the latter exist: for $\alpha=0$, this is known to be the case for instance if $K\in BV_{loc}(\Omega;\R^d)$, cf.~\cite{Bouchut}, or if $K\in L^1_{loc}(\Omega;\R^d)\cap BV_{loc}(\Omega\setminus\{0\};\R^d)$ takes the form $K=-\nabla V$ with $V(x)\ge-C(1+|x|^2)$, cf.~\cite{Hauray}, and we refer to~\cite{LeBris-Lions} for the case $\alpha>0$. 

\begin{definition}\label{WeakForm}
Let $K\in L^1_{loc}(\Omega;\R^d)$.
\begin{enumerate}[{\rm(i)}]
\item Given $T>0$ and $\Phi_N^T\in L^\infty(\D^N)$, we consider the following backward Liouville equation on $(-\infty,T]$,
{\small\begin{multline}\label{eq:back-Liouv-def}
\quad\partial_t\Phi_{N}+\sum_{i=1}^N\Big(v_i\cdot\nabla_{x_i}\Phi_N+\frac1{N-1}\sum_{j:j\ne i}^NK(x_i-x_j)\cdot\nabla_{v_i}\Phi_{N}\Big)\\[-4mm]
=-\alpha\sum_{i=1}^N\Delta_{v_i}\Phi_N,
\end{multline}
}with final data $\Phi_{N}|_{t=T}=\Phi_{N}^T$.
We say that $\Phi_N\in L^\infty(-\infty,T;\linebreak L^\infty(\D^N))$ is a {\bf global bounded weak solution} of this equation if it belongs to $C_{loc}(-\infty,T; w^*L^\infty(\D^N))$ with
\[\qquad\|\Phi_N\|_{L^\infty((-\infty,\ T]\times \D^N)}\leq \|\Phi_N^T\|_{L^\infty(\D^N)},\]
and if it satisfies~\eqref{eq:back-Liouv-def} in the distributional sense: for all $G\in C^\infty_c((-\infty,T]\times\D^N)$,
{\small\begin{multline*}
\qquad\int_{-\infty}^T\int_{\D^N}(\partial_tG) \Phi_N-\int_{\D^N}G(T)\, \Phi_N^T\\
+\sum_{i=1}^N\int_{-\infty}^T\int_{\D^N}\Big(v_i\cdot\nabla_{x_i}G+\frac1{N-1}\sum_{j:j\ne i}K(x_i-x_j)\cdot\nabla_{v_i}G\Big)\Phi_N\\[-2mm]
\,=\,\alpha\sum_{i=1}^N\int_{-\infty}^T\int_{\D^N}(\Delta_{v_i}G )\Phi_N.
\end{multline*}}
\item Given $F_N^\circ\in L^1(\D^N)$, we say that $F_N\in L^\infty_{loc}(\R^+;L^1(\D^N))$ is a \emph{\bf global weak duality solution} of the Liouville equation~\eqref{eq:Liouville} with initial data $F_N^\circ$ if it belongs to $C_{loc}(\R^+;wL^1_{loc}(\D^N))$ and if, for any $T>0$ and any $\Phi_{N}^T\in L^\infty(\D^N)$ that tends to $0$ at infinity, there exists a global bounded weak solution $\Phi_N\in L^\infty(-\infty,T; L^\infty(\D^N))$ of the backward Liouville equation~\eqref{eq:back-Liouv-def} on $(-\infty,T]$ with final data~$\Phi_{N}^T$, in the sense of~{\rm(i)} above, such that we have the duality formula
\[\qquad\int_{\D^N} \Phi_{N}^T  F_N(T) = \int_{\D^N} \Phi_N(0) F_{N}^\circ.\]
(Note that both sides of this formula make pointwise sense thanks to the time continuity of $F_N$ and $\Phi_N$.)
\end{enumerate}
\end{definition}

As the following result shows, global bounded weak duality solutions always exist whenever $K\in L^1_{loc}(\Omega;\R^d)$ and $F_N^\circ\in L^1(\D^N) \cap L^p(\D^N)$ for some~$p>1$.

\begin{proposition}\label{prop:weak-dual-sol}
Let $K\in L^1_{loc}(\Omega;\R^d)$ and $F_N^\circ\in L^1(\D^N) \cap L^p(\D^N)$ for some~$1<p\le\infty$.
Then there exists a global weak duality solution $F_N\in L^\infty(\R^+; L^1(\D^N)\cap L^p(\D^N))$ of the Liouville equation~\eqref{eq:Liouville} with initial data~$F_N^\circ$.
If in addition $K\in L^{p'}_{loc}(\Omega;\R^d)$ with 
$1/p+1/p'=1$, then there exists such a global weak duality solution that further satisfies~\eqref{eq:Liouville} in the weak sense.
\end{proposition}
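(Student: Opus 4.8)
The plan is to build $F_N$ by a vanishing-regularization procedure, the key being that the a priori bounds coming from the duality structure itself suffice, even though the flux $K(x_i-x_j)F_N$ may never be defined. Let $K^\delta$ be a smooth, globally Lipschitz kernel with $K^\delta\to K$ in $L^1_{loc}(\Omega;\R^d)$---for instance $K*\rho_\delta$ on $\T^d$, or $K*\rho_\delta$ truncated at scale $1/\delta$ on $\R^d$---chosen moreover so that $K^\delta\to K$ in $L^{p'}_{loc}$ whenever $K\in L^{p'}_{loc}$. For this Lipschitz kernel the $N$-particle SDE~\eqref{eq:SDE} is well posed; its time-marginal law $F^\delta_N(t)$ solves the regularized Liouville equation with data $F_N^\circ$ and lies in $\mathcal{C}(\R^+;\mathcal{P}(\D^N))$, while for any $T>0$ and $\Phi^T_N\in L^\infty(\D^N)$ the conditional expectation $\Phi^\delta_N(t,z):=\E[\Phi^T_N(Z^\delta(T))\mid Z^\delta(t)=z]$ is a global bounded weak solution on $(-\infty,T]$ of the regularized backward Liouville equation with final data $\Phi^T_N$ in the sense of Definition~\ref{WeakForm}(i) (for $\alpha=0$, replace the stochastic flow by the characteristic flow of $K^\delta$). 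By the Markov property one has the exact duality $\int_{\D^N}\Phi^\delta_N(0)\,F_N^\circ=\int_{\D^N}\Phi^T_N\,F^\delta_N(T)$. Moreover, since the drift $\big(v_i,\tfrac{1}{N-1}\sum_{j\neq i}K^\delta(x_i-x_j)\big)_i$ is divergence-free in the phase-space variables and the diffusion is $L^q$-dissipative, $t\mapsto\|F^\delta_N(t)\|_{L^1(\D^N)\cap L^p(\D^N)}$ is non-increasing, so $F^\delta_N$ is bounded in $L^\infty(\R^+;L^1\cap L^p)$ uniformly in $\delta$; and the maximum principle gives $\|\Phi^\delta_N\|_{L^\infty((-\infty,T]\times\D^N)}\le\|\Phi^T_N\|_{L^\infty(\D^N)}$ uniformly in $\delta$.

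Consider first $F_N^\circ\in L^1\cap L^\infty$. From the uniform $L^\infty$ bound on $\Phi^\delta_N$ and its equation, $\partial_t\Phi^\delta_N$ is bounded in $L^\infty_{loc}(-\infty,T;W^{-2,1}_{loc}(\D^N))$ uniformly in $\delta$, using only $\sup_\delta\|K^\delta\|_{L^1(B_R)}<\infty$; a weak-$*$ Arzel\`a--Ascoli argument then gives, along a subsequence, $\Phi^\delta_N\to\Phi_N$ in $\mathcal{C}_{loc}(-\infty,T;w^*L^\infty(\D^N))$, and since in the weak formulation the only nontrivial term $\int K^\delta(x_i-x_j)\cdot\nabla_{v_i}G\,\Phi^\delta_N$ passes to the limit---$K^\delta\nabla_{v_i}G\to K\nabla_{v_i}G$ strongly in $L^1$ against $\Phi^\delta_N\overset{*}{\rightharpoonup}\Phi_N$ in $L^\infty$---the limit $\Phi_N$ is a global bounded weak solution of the backward equation with the \emph{true} kernel $K$, final data $\Phi^T_N$, and $\|\Phi_N\|_\infty\le\|\Phi^T_N\|_\infty$; importantly, this limiting backward solution depends only on $(T,\Phi^T_N)$ and not on the initial data. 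Likewise, $F_N^\circ\in L^\infty$ forces $K^\delta F^\delta_N$ to be bounded in $L^\infty_tL^1_{loc}$, hence $\partial_tF^\delta_N$ bounded in $L^\infty_tW^{-2,1}_{loc}$, so along a further subsequence $F^\delta_N\to F_N$ in $\mathcal{C}_{loc}(\R^+;wL^1_{loc}(\D^N))$ with $F_N(0)=F_N^\circ$, and the same strong/weak argument lets one pass to the limit in the nonlinear term, so that $F_N$ solves~\eqref{eq:Liouville} weakly. Passing to the limit in the duality identity---both sides being well defined in time by these continuities---gives $\int\Phi^T_N\,F_N(T)=\int\Phi_N(0)\,F_N^\circ$, which by a further approximation of $\Phi^T_N$ in $C_c(\D^N)$ (re-using the weak-$*$ compactness for the backward equation, now directly with $K\in L^1_{loc}$) extends to all admissible final data. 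Hence $F_N$ is a weak duality solution in the sense of Definition~\ref{WeakForm}(ii); this proves the proposition when $p=\infty$, including the last assertion there (where $p'=1$).

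For $F_N^\circ\in L^1\cap L^p$ with $1<p<\infty$, approximate it by $F^{\circ,n}_N\in L^1\cap L^\infty$ with $F^{\circ,n}_N\to F_N^\circ$ in $L^1\cap L^p$. The previous step gives weak duality solutions $F^{(n)}_N$ with, for $G\in C_c(\D^N)$, $\int G\,F^{(n)}_N(t)=\langle\Phi^{[t,G]}_N(0),F^{\circ,n}_N\rangle$ where---crucially---the backward solutions $\Phi^{[t,G]}_N$ do not depend on $n$. Since $\big|\langle\Phi^{[t,G]}_N(0),F^{\circ,n}_N-F^{\circ,m}_N\rangle\big|\le\|G\|_\infty\|F^{\circ,n}_N-F^{\circ,m}_N\|_{L^1}$ uniformly in $t$, the maps $t\mapsto\int G\,F^{(n)}_N(t)$ converge uniformly on $\R^+$ to $t\mapsto\langle\Phi^{[t,G]}_N(0),F_N^\circ\rangle$, which is therefore continuous; with the uniform bound $\|F^{(n)}_N(t)\|_{L^p}\le\|F^{\circ,n}_N\|_{L^p}$ this yields $F_N\in\mathcal{C}(\R^+;wL^1_{loc})\cap L^\infty(\R^+;L^1\cap L^p)$ with $F_N(0)=F_N^\circ$ and $\int\Phi^T_N\,F_N(T)=\langle\Phi^{[T,\Phi^T_N]}_N(0),F_N^\circ\rangle$, i.e.\ $F_N$ is a weak duality solution, which is the first assertion. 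If in addition $K\in L^{p'}_{loc}$, each $F^{(n)}_N$ is---by the previous step---a weak solution of~\eqref{eq:Liouville}, and one may pass to the limit $n\to\infty$ in $\int K(x_i-x_j)\cdot\nabla_{v_i}G\,F^{(n)}_N$ since now $K\nabla_{v_i}G\in L^{p'}$ while $F^{(n)}_N\rightharpoonup F_N$ weakly in $L^p$; hence $F_N$ is then also a weak solution, proving the last assertion in full.

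The main point to get right is the passage to the limit in the \emph{backward} equation: one has to check that the weak-$*$ limit of the regularized dual solutions still solves the equation with the genuinely singular kernel $K$, still obeys the $L^\infty$ bound, is time-continuous, and above all can be chosen independently of the initial datum---the last feature being precisely what reduces the general $L^p$ case to the bounded case above. The analytic ingredient driving everything is the strong $L^1_{loc}$ (respectively $L^{p'}_{loc}$) convergence $K^\delta\to K$, paired against the weak-$*$ (respectively weak) convergence of the solutions.
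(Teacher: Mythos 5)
Your proposal is correct and follows the same overall strategy as the paper's proof: approximate the singular kernel by smooth kernels $K^\delta \to K$ in $L^1_{loc}$, solve forward and backward regularized Liouville equations with their exact duality, exploit the maximum principle for the uniform $L^\infty$ bound on $\Phi_N^\delta$, derive time compactness of $\Phi_N^\delta$ from the equation using the bound on $\|K^\delta\|_{L^1_{loc}}$, pass to the limit in the weak backward equation by pairing strong $L^1_{loc}$ convergence of $K^\delta$ against weak-$*$ convergence of $\Phi_N^\delta$, and finally pass to the limit in the duality identity. The main organizational difference is that you decouple the two approximations: you first build the construction for $L^1\cap L^\infty$ initial data using the crude bound on $K^\delta F_N^\delta$ in $L^\infty_tL^1_{loc}$ to get time compactness of $F_N^\delta$, and then relax to $L^1\cap L^p$ data by a second approximation layer that crucially re-uses the \emph{same} backward solution (your observation that $\Phi_N^{[t,G]}$ is independent of the initial datum). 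The paper instead regularizes $K$ and $F_N^\circ$ simultaneously and obtains time compactness of $F_{N,\epsilon}$ more economically \emph{from the duality identity itself}: the uniform-in-time compactness of $\Phi_{N,\epsilon}$ and the strong $L^1$ convergence of $F_{N,\epsilon}^\circ$ make $t\mapsto\int\Phi_N^T F_{N,\epsilon}(t)$ converge uniformly for each $\Phi_N^T$, yielding $\mathcal{C}_{loc}(\R^+;wL^p)$ precompactness in one stroke without ever needing the flux $K^\delta F_N^\delta$ to be bounded. Your route is slightly longer but exposes more clearly \emph{why} the argument is robust under initial-data approximation (the backward solution only sees $K$, $T$, and the final datum), which is a clean way to think about it.
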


\begin{proof}
We proceed by approximation: let $K_\epsilon\in C^\infty_b(\Omega;\R^d)$ and $F_{N,\epsilon}^\circ\in L^1 (\D^N)\cap L^\infty(\D^N)$ such that $K_\epsilon\to K$ in $L^1_{loc}(\Omega;\R^d)$ and $F_{N,\epsilon}^\circ\to F_N^\circ$ in~$L^1(\D^N)\cap L^p(\D^N)$ as $\epsilon\to0$. For fixed $\epsilon$, by regularity of $K_\epsilon$ and boundedness of $F_{N,\epsilon}^\circ$, it is well-known that there exists a unique global weak solution $F_{N,\epsilon}\in L^\infty(\R^+;L^1(\D^N)\cap L^\infty(\D^N))$ of the Liouville equation~\eqref{eq:Liouville} with kernel~$K$ replaced by~$K_\epsilon$ and with initial data $F_N^\circ$ replaced by $F_{N,\epsilon}^\circ$, cf.~e.g.~\cite{DiLi}, and it satisfies the a priori estimate
\begin{equation}\label{eq:bnd-FNeps}
\|F_{N,\epsilon}\|_{L^\infty(\R^+;L^1(\D^N)\cap L^p(\D^N))}\,\le\,\|F_{N,\epsilon}^\circ\|_{L^1(\D^N)\cap L^p(\D^N)}.
\end{equation}
By weak compactness, up to a subsequence as $\epsilon\downarrow0$, we deduce that $F_{N,\epsilon}$ converges weakly-* to some $F_N$ in $L^\infty(\R^+;L^1_{loc}(\D^N)\cap L^p(\D^N))$. In the particular case when we have in addition $K\in L^{p'}_{loc}(\Omega;\R^d)$, we can pass to the limit in the weak formulation of the equation for $F_{N,\epsilon}$ and conclude that~$F_N$ actually satisfies the weak formulation of the Liouville equation~\eqref{eq:Liouville} with initial data $F_N^\circ$.

We turn to the duality property.
Let $T>0$ be fixed, as well as some $\Phi_N^T\in L^\infty(\D^N)$ that tends to $0$ at infinity.
By regularity of $K_\epsilon$,
there exists a unique global weak solution $\Phi_{N,\epsilon}\in L^\infty(-\infty,T;L^\infty(\D^N))$ of the backward Liouville equation~\eqref{eq:back-Liouv-def} on $(-\infty,T]$ with kernel $K$ replaced by $K_\epsilon$ and with final data $\Phi_N^T$, and it satisfies the a priori estimate
\begin{equation}\label{eq:bnd-PhiNeps}
\|\Phi_{N,\epsilon}\|_{L^\infty(-\infty,T;L^\infty(\D^N))}\,\le\,\|\Phi_N^T\|_{L^\infty(\D^N)}.
\end{equation}
By weak compactness, up to a subsequence as $\epsilon\downarrow0$, we deduce that~$\Phi_{N,\epsilon}$ converges weakly-* to some $\Phi_N$ in $L^\infty(-\infty,T;L^\infty(\D^N))$ and that the latter satisfies the weak formulation of the backward Liouville equation~\eqref{eq:back-Liouv-def} with final data $\Phi_N^T$.
From~\eqref{eq:bnd-PhiNeps} and from the equation for $\Phi_{N,\epsilon}$, we find that the time-derivatives $(\partial_t\Phi_{N,\epsilon})_\epsilon$ are bounded in $L^\infty(-\infty,T;\linebreak W^{-2,1}_{loc}(\D^N))$.
By the Aubin--Lions--Simon lemma, this entails that $(\Phi_{N,\epsilon})_\epsilon$ is precompact e.g.\@ in $C_{loc}(-\infty,T;W^{-1,\infty}_{loc}(\D^N))$. As $W^{-1,\infty}=(W^{1,1})^*$ and as $W^{1,1}(\D^N)$ is dense in $L^1(\D^N)$, we conclude that $(\Phi_{N,\epsilon})_\epsilon$ is actually precompact in $C_{loc}(-\infty,T;w^*L^\infty(\D^N))$. This proves the desired time continuity of the limit $\Phi_N$, which is thus by definition a global bounded weak solution of the backward Liouville equation.

For fixed $\epsilon$, by regularity of~$K_\epsilon$, the bounded weak solutions~$F_{N,\epsilon}$ and $\Phi_{N,\epsilon}$ can be shown to satisfy the following duality formula, cf.~\cite[Section~II.4]{DiLi}, for all $t\ge0$,
\begin{equation}\label{eq:duality-epsilon}
\int_{\D^N}\Phi_N^TF_{N,\epsilon}(t)\,=\,\int_{\D^N}\Phi_{N,\epsilon}(T-t)F_{N,\epsilon}^\circ.
\end{equation}
Recalling that $(\Phi_{N,\epsilon})_\epsilon$ is precompact in $C_{loc}(-\infty,T;w^*L^\infty(\D^N))$, that $(F_{N,\epsilon}^\circ)_\epsilon$ converges strongly to $F_N^\circ$ in $L^1(\D^N)$, that $(F_{N,\epsilon})_\epsilon$ converges weakly-* to $F_N$ in $L^\infty(\R^+;L^1_{loc} (\D^N)\cap L^p(\D^N))$, and that the choice of $\Phi_N^T$ is arbitrary, this identity entails that $(F_{N,\epsilon})_\epsilon$ is precompact in $C_{loc}(\R^+;wL^p(\D^N))$. We may now pass to the pointwise limit in~\eqref{eq:duality-epsilon} and conclude that $F_N$ is a weak duality solution.
\end{proof}

We emphasize that a weak duality solution~$F_N$ in the above sense may not remain a probability density, even if $F_N^\circ$ was one initially. Indeed, the above proof does not ensure that the solution does not lose mass at infinity in finite time: $F_N$ is only tested against $\Phi_N$, which vanishes at infinity, so that we cannot choose $\Phi_N=1$. This issue can correspond to a possible blow-up in the original many-particle system. With the assumptions of the proposition above, there is indeed no reason to expect strong solutions to exist globally in time for the trajectories of the particles. If trajectories go to infinity in finite time with a positive probability, then the corresponding configurations have to vanish from $F_N$, thus leading to a loss of mass.  

\medskip
We can also introduce a corresponding notion of weak duality solutions for the Liouville equation~\eqref{eq:Liouville-1st} associated to first-order systems as considered in Theorem~\ref{th:main-1st}. We state below a corresponding existence result; details are omitted for shortness.
Note that the assumption $\operatorname{div}K\in L^1_{loc}(\Omega)$ is needed to make sense of the weak formulation of the backward equation.
Again, weak duality solutions coincide with renormalized solutions when the latter exist: for $\alpha=0$, this is known to be the case for instance if $K\in BV_{loc}(\Omega;\R^d)$ with $\operatorname{div}K\in L^\infty_{loc}(\Omega)$, cf.~\cite{Ambrosio-04}, and we refer to~\cite{LeBris-Lions} for the case $\alpha>0$.

\begin{definition}\label{WeakForm-1st}
Let $K\in L^1_{loc}(\Omega;\R^d)$ with $\operatorname{div}K\in L^1_{loc}(\Omega)$.
\begin{enumerate}[{\rm(i)}]
\item Given $T>0$ and $\Phi_N^T\in L^\infty(\Omega^N)$, we consider the following backward Liouville equation on $(-\infty,T]$,
{\small\begin{equation}\label{eq:back-Liouv-def-1st}
\qquad\partial_t\Phi_{N}+\frac1{N-1}\sum_{i\ne j}^NK(x_i-x_j)\cdot\nabla_{x_i}\Phi_{N}
=-\alpha\sum_{i=1}^N\Delta_{x_i}\Phi_N,
\end{equation}
}with final data $\Phi_{N}|_{t=T}=\Phi_{N}^T$.
We say that $\Phi_N\in L^\infty(-\infty,T;\linebreak L^\infty(\Omega^N))$ is a {\bf global bounded weak solution} if it belongs to \linebreak$C_{loc}(-\infty,T; w^*L^\infty(\Omega^N))$ with
\[\qquad\|\Phi_N\|_{L^\infty((-\infty,\ T]\times \Omega^N)}\le \|\Phi_N^T\|_{L^\infty(\Omega^N)},\]
and if it satisfies~\eqref{eq:back-Liouv-def-1st} in the distributional sense.
\smallskip\item Given $F_N^\circ\in L^1(\Omega^N)$, we say that $F_N\in L^\infty_{loc}(\R^+;L^1(\Omega^N))$ is a \emph{\bf global weak duality solution} of the Liouville equation~\eqref{eq:Liouville-1st} with initial data $F_N^\circ$ if it belongs to $C_{loc}(\R^+;wL^1_{loc}(\Omega^N))$ and if, for any $T>0$ and any $\Phi_{N}^T\in L^\infty(\Omega^N)$ that tends to $0$ at infinity, there exists a global bounded weak solution $\Phi_N\in L^\infty(-\infty,T; L^\infty(\Omega^N))$ of the backward Liouville equation~\eqref{eq:back-Liouv-def-1st} on $(-\infty,T]$ with final data~$\Phi_{N}^T$, in the sense of~{\rm(i)} above, such that we have the duality formula
\[\qquad\int_{\Omega^N} \Phi_{N}^T  F_N(T) = \int_{\Omega^N} \Phi_N(0) F_{N}^\circ.\]
\end{enumerate}
\end{definition}

\begin{proposition}
Let $K\in L^1_{loc}(\Omega;\R^d)$ with $\operatorname{div}K\in L^1_{loc}(\Omega)$, and let $F_N^\circ\in L^1(\Omega^N)  \cap L^p(\Omega^N)$ for some $p>1$. Then there exists a global weak duality solution $F_N\in L^\infty(\R^+; L^1(\Omega^N) \cap L^p(\Omega^N))$ of the Liouville equation~\eqref{eq:Liouville-1st} with initial data~$F_N^\circ$.
\end{proposition}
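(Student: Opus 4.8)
The plan is to mirror the approximation scheme used in the proof of Proposition~\ref{prop:weak-dual-sol}, adapted to the first-order Liouville equation~\eqref{eq:Liouville-1st}. First I would regularize the kernel and the initial data: pick $K_\epsilon\in C^\infty_b(\Omega;\R^d)$ with $K_\epsilon\to K$ in $L^1_{loc}(\Omega;\R^d)$ and, crucially, $\operatorname{div}K_\epsilon\to\operatorname{div}K$ in $L^1_{loc}(\Omega)$ (mollification does this, as it commutes with the divergence), together with $F^\circ_{N,\epsilon}\in L^1(\Omega^N)\cap L^\infty(\Omega^N)$ such that $F^\circ_{N,\epsilon}\to F^\circ_N$ in $L^1\cap L^p$. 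For fixed $\epsilon$, by smoothness of $K_\epsilon$, the forward equation~\eqref{eq:Liouville-1st} with kernel $K_\epsilon$ and datum $F^\circ_{N,\epsilon}$ admits a unique global (indeed classical) solution $F_{N,\epsilon}\in L^\infty(\R^+;L^1\cap L^\infty)$; arguing as in Proposition~\ref{prop:weak-dual-sol}, by duality against the backward equation, one obtains the a priori estimate $\|F_{N,\epsilon}\|_{L^\infty(\R^+;L^1(\Omega^N)\cap L^p(\Omega^N))}\le\|F^\circ_{N,\epsilon}\|_{L^1\cap L^p}$. By weak-$*$ compactness, up to a subsequence as $\epsilon\downarrow0$, we get $F_{N,\epsilon}\overset{*}{\rightharpoonup}F_N$ in that space, and if moreover $K\in L^{p'}_{loc}$ one can pass to the limit in the weak formulation to see $F_N$ solves~\eqref{eq:Liouville-1st} weakly.

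Next, given $T>0$ and $\Phi^T_N\in L^\infty(\Omega^N)$ vanishing at infinity, I would solve the backward equation~\eqref{eq:back-Liouv-def-1st} with kernel $K_\epsilon$ and final datum $\Phi^T_N$, obtaining $\Phi_{N,\epsilon}\in L^\infty(-\infty,T;L^\infty(\Omega^N))$ with the maximum-principle bound $\|\Phi_{N,\epsilon}\|_{L^\infty}\le\|\Phi^T_N\|_{L^\infty}$ (transport plus backward diffusion preserve $L^\infty$). From this bound and the equation, the time derivatives $(\partial_t\Phi_{N,\epsilon})_\epsilon$ are bounded in $L^\infty(-\infty,T;W^{-2,1}_{loc}(\Omega^N))$; here the assumption $\operatorname{div}K\in L^1_{loc}(\Omega)$ is precisely what is needed so that the transport term $K_\epsilon(x_i-x_j)\cdot\nabla_{x_i}\Phi_{N,\epsilon}$ makes sense weakly after integration by parts. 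By the Aubin--Lions--Simon lemma, $(\Phi_{N,\epsilon})_\epsilon$ is then precompact in $C_{loc}(-\infty,T;W^{-1,\infty}_{loc}(\Omega^N))$, and since $W^{-1,\infty}=(W^{1,1})^*$ with $W^{1,1}(\Omega^N)$ dense in $L^1(\Omega^N)$, it is in fact precompact in $C_{loc}(-\infty,T;w^*L^\infty(\Omega^N))$. A further extraction gives $\Phi_{N,\epsilon}\to\Phi_N$ in that topology; passing to the limit in the weak formulation (the pairing $\operatorname{div}K_\epsilon\,\Phi_{N,\epsilon}\to\operatorname{div}K\,\Phi_N$ via strong$\times$weak-$*$ convergence) shows that $\Phi_N$ is a global bounded weak solution of the backward equation with final datum $\Phi^T_N$, in the sense of Definition~\ref{WeakForm-1st}(i).

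It remains to check the duality identity. For fixed $\epsilon$, by smoothness of $K_\epsilon$, the forward/backward pair $(F_{N,\epsilon},\Phi_{N,\epsilon})$ satisfies $\int_{\Omega^N}\Phi^T_N F_{N,\epsilon}(t)=\int_{\Omega^N}\Phi_{N,\epsilon}(T-t)F^\circ_{N,\epsilon}$ for all $t\ge0$ (classical duality for Liouville pairs with smooth coefficients, cf.~\cite{DiLi}). Using that $(\Phi_{N,\epsilon})_\epsilon$ is precompact in $C_{loc}(-\infty,T;w^*L^\infty)$, that $F^\circ_{N,\epsilon}\to F^\circ_N$ strongly in $L^1$, and that $F_{N,\epsilon}\overset{*}{\rightharpoonup}F_N$ in $L^\infty(\R^+;L^1\cap L^p)$, this identity first shows that $(F_{N,\epsilon})_\epsilon$ is precompact in $C_{loc}(\R^+;wL^p(\Omega^N))$ (so $F_N$ has the required time continuity), and then lets us pass to the pointwise-in-$t$ limit to get $\int_{\Omega^N}\Phi^T_N F_N(T)=\int_{\Omega^N}\Phi_N(0)F^\circ_N$. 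Since $T>0$ and $\Phi^T_N$ were arbitrary, $F_N$ is a global weak duality solution of~\eqref{eq:Liouville-1st}.

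I expect the main obstacle to be the same as in the second-order case: securing enough time-compactness for the backward solutions to justify passing to the limit $\epsilon\downarrow0$ \emph{pointwise in time} in the duality formula — this is what forces the Aubin--Lions--Simon step together with the upgrade from $C_{loc}(W^{-1,\infty}_{loc})$ to $C_{loc}(w^*L^\infty)$. The only point genuinely new compared to Proposition~\ref{prop:weak-dual-sol} is that the joint drift $\tfrac1{N-1}\sum_{i\ne j}K(x_i-x_j)$ on $\Omega^N$ is no longer divergence-free, so one must be a little more careful with the weak formulation of the backward transport term (whence the hypothesis $\operatorname{div}K\in L^1_{loc}$) and with the $L^p$ bookkeeping; these are the routine adaptations referred to as "omitted for shortness."
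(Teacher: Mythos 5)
Your overall plan — mollify $K$, solve the regularized forward and backward problems, extract weak limits, and pass to the limit in the $\epsilon$-level duality identity — is exactly the adaptation of Proposition~\ref{prop:weak-dual-sol} that the paper has in mind, and most of your steps (the $L^\infty$ maximum principle for $\Phi_{N,\epsilon}$, the Aubin--Lions--Simon argument in $W^{-2,1}_{loc}$, the upgrade from $C_{loc}(W^{-1,\infty}_{loc})$ to $C_{loc}(w^*L^\infty)$, and the passage to the limit in the duality identity) are sound and match the second-order proof mutatis mutandis.

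There is, however, a genuine gap at the very first estimate. You write that \emph{``arguing as in Proposition~\ref{prop:weak-dual-sol}, by duality against the backward equation''} one gets
$\|F_{N,\epsilon}\|_{L^\infty(\R^+;L^1\cap L^p)}\le\|F^\circ_{N,\epsilon}\|_{L^1\cap L^p}$.
In the second-order case this bound does not come from a duality argument at all: it comes from the fact that the phase-space drift $(v_i,\tfrac1{N-1}\sum_{j}K(x_i-x_j))$ is \emph{divergence-free} (the force does not depend on $v_i$), so that the characteristics of the regularized Liouville equation preserve Lebesgue measure and every $L^p$ norm of $F_{N,\epsilon}$ is nonincreasing; this is the DiPerna--Lions input cited at~\eqref{eq:bnd-FNeps}. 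In the first-order setting that structure is lost: the drift $b_i(x)=\tfrac1{N-1}\sum_{j\ne i}K(x_i-x_j)$ on $\Omega^N$ has divergence $\tfrac1{N-1}\sum_{i\ne j}(\operatorname{div}K)(x_i-x_j)$, and the $L^p$ balance for the regularized solution reads
\[
\frac1p\frac{d}{dt}\int_{\Omega^N}|F_{N,\epsilon}|^p
\,=\,-\Big(1-\tfrac1p\Big)\frac1{N-1}\sum_{i\ne j}\int_{\Omega^N}(\operatorname{div}K_\epsilon)(x_i-x_j)\,|F_{N,\epsilon}|^p
-\alpha(p-1)\int_{\Omega^N}|F_{N,\epsilon}|^{p-2}|\nabla F_{N,\epsilon}|^2.
\]
With $\operatorname{div}K$ only in $L^1_{loc}$ the first term cannot be controlled uniformly in $\epsilon$: after mollification $\|\operatorname{div}K_\epsilon\|_{L^\infty}$ blows up, so the Gronwall constant in $\|F_{N,\epsilon}(t)\|_{L^p}\le e^{C_\epsilon t}\|F^\circ_{N,\epsilon}\|_{L^p}$ blows up as well, and the weak-$*$ compactness step breaks down (only the $L^1$ bound survives, which gives weak-$*$ compactness in measures, not in $L^1\cap L^p$). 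Likewise, the dual $L^{p'}$ bound on $\Phi_{N,\epsilon}$ that a ``duality'' version of the argument would require fails for the same reason: only $p'=\infty$ is free of charge, since that is the maximum principle. To close this step one needs an additional one-sided or integrability control on $\operatorname{div}K$ — for instance $(\operatorname{div}K)^-\in L^\infty$ to get monotonicity of the $L^p$ norm, or $\alpha>0$ together with $\operatorname{div}K\in L^q_{loc}$ for some $q>1$ so that the transport contribution can be absorbed into the viscous dissipation by Gagliardo--Nirenberg. As stated, the estimate you invoke does not follow, and you should either flag the extra hypothesis you are using or carry out the absorption argument explicitly.
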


\end{document}